\documentclass{amsart}
\usepackage{amsmath} 
\usepackage{amssymb}
\usepackage{mathtools}
\usepackage{centernot}
\usepackage{stmaryrd}
\usepackage{esvect}
\usepackage{amsthm}
\usepackage{upgreek}
\usepackage{comment}
\usepackage{amsfonts}
\usepackage{physics}
\usepackage{tikz-cd}
\usepackage[utf8]{inputenc}
\usepackage[english]{babel}
\usepackage{graphicx}
\usepackage[margin=1in]{geometry}
\usepackage{mathtools}
\usepackage{hyperref}
\usepackage{microtype}

\graphicspath{{images/}}
\newcommand{\R}{\mathbb{R}}

\renewcommand{\part}[1]{\noindent\textbf{Part #1)}}

\newcommand{\ba}{\begin{align*}}

\newcommand{\ea}{\end{align*}}

\newcommand{\N}{\mathbb{N}}

\newcommand{\Lam}{\Lambda}

\newcommand{\Int}[1]{%
  {\kern0pt#1}^{\mathrm{o}}%
}

\newcommand{\F}{\mathcal{F}}

\newcommand{\bp}{\begin{pmatrix}}
\newcommand{\ep}{\end{pmatrix}}

\newcommand{\interior}[1]{%
  {\kern0pt#1}^{\mathrm{o}}%
}

\newcommand{\cLamu}{\widetilde{\Lam}^u}
\newcommand{\cLams}{\widetilde{\Lam}^s}
\newcommand{\cSigs}{\widetilde{\Sigma}^s}
\newcommand{\cSigu}{\widetilde{\Sigma}^u}
\newcommand{\Lamu}{\Lambda^u}
\newcommand{\Lams}{\Lambda^s}
\newcommand{\sat}{\text{Sat}}

\usepackage{xargs}                      
\usepackage{xcolor}
\usepackage[colorinlistoftodos,prependcaption,textsize=tiny]{todonotes}
\newtheorem{theorem}{Theorem}[section]
\newtheorem{corollary}[theorem]{Corollary}

\newtheorem{lemma}[theorem]{Lemma}
\newtheorem{prop}[theorem]{Proposition}

\theoremstyle{definition}
\newtheorem{definition}[theorem]{Definition}
\newtheorem{remark}[theorem]{Remark}

\title{Finiteness of Anosov and Pseudo Anosov flows on hyperbolic 3-manifolds}

\author[D. Gabai]{David Gabai}
\address{Department of Mathematics, Princeton University, Fine Hall, 304 Washington Rd, Princeton, NJ 08544}
\email{gabai@math.princeton.edu}

\author[L. Li]{Lily Qiao Li}
\address{Department of Mathematics, Princeton University, Fine Hall, 304 Washington Rd, Princeton, NJ 08544}
\email{lilyli@princeton.edu}

\subjclass[2020]{}
\keywords{}

\begin{document}  

\begin{abstract} We show that for any closed hyperbolic 3-manifold $M$, there are only finitely many Anosov or pseudo-Anosov flows on $M$ up to orbit equivalence.
\end{abstract}

\maketitle

\section{Introduction}
In this paper, we show the finiteness conjecture for Anosov and pseudo-Anosov flows on closed hyperbolic 3-manifolds. In particular, we prove the following result.

\begin{theorem}\label{Theorem: main theorem}
    Let $M$ be a closed hyperbolic manifold. Then there are finitely many Anosov and pseudo-Anosov flows up to orbit equivalence on $M$. 
\end{theorem}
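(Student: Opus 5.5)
My plan is to reduce finiteness of flows up to orbit equivalence to finiteness of certain combinatorial data, namely a \emph{bounded-complexity triangulation} carrying the flow, and then use Mostow rigidity together with the fact that a hyperbolic $M$ has only finitely many triangulations (or branched surfaces) of bounded complexity.

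\textbf{Step 1: from a flow to an essential lamination.} Let $\phi$ be a (pseudo-)Anosov flow on $M$. Blowing up its singular orbits, the weak stable foliation $\Lams$ gives an essential lamination, and so does the weak unstable one $\Lamu$. By results of Gabai--Oertel, $\Lams$ is carried by an essential branched surface. Likewise, $\phi$ is determined up to orbit equivalence by its orbit space together with the $\pi_1(M)$-action, and the orbit space is determined by the pair $(\cLams,\cLamu)$.

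\textbf{Step 2: a transverse veering (or Agol--Guéritaud type) triangulation.} Using a maximal family of closed orbits, I would drill $M$ along finitely many singular or closed orbits, which are chosen to have no perfect fits. By Agol--Guéritaud and Landry--Minsky--Taylor, the drilled manifold carries a veering triangulation that recovers the flow, and the flow on $M$ is recovered by Dehn filling. So orbit equivalence classes inject into pairs consisting of a veering triangulation on the complement of a link in $M$ and a filling slope.

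\textbf{Step 3: bounding complexity.} This is the main obstacle. I must show that the drilled orbits, and hence the number of tetrahedra, can be chosen of bounded length or complexity depending only on $M$. I would try to bound the length of a collection of closed orbits that fill, in the sense that their complement has no perfect fits. The approach is to compare with geodesics in $\mathbb{H}^3$. Since the flow is quasi-geodesic in the non-$\mathbb{R}$-covered case (Fenley), and $\R$-covered flows are handled separately by Fenley's finiteness of skewed $\R$-covered Anosov flows, orbit lengths are comparable to geodesic lengths. The quasi-geodesic constants are not a priori uniform over flows, so I would instead use a thick--thin or compactness argument. Suppose infinitely many pairwise inequivalent flows exist. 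Pass to a subsequence whose stable laminations converge in the space of laminations, using compactness of the space of essential laminations carried by finitely many branched surfaces (Gabai--Oertel / Agol--Li). Then show that the limit still carries a pseudo-Anosov-like structure, and that nearby laminations carried by the same branched surface with the same transverse structure yield orbit-equivalent flows, by rigidity of the associated universal circle action.

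\textbf{Step 4: conclusion.} Finitely many veering triangulations of bounded size, finitely many filling slopes compatible with the boundary veering structure, and finitely many branched surfaces of bounded complexity together give finitely many flows. The step I expect to fail or require real work is the uniformity in Step 3, that is, bounding the complexity of the carrying branched surface independently of the flow. I would attack it first via the least-area/normal-surface bound: represent $\Lams$ as normal with respect to a fixed triangulation of $M$, and use the essential branched surface theory to limit the number of normal disk types.
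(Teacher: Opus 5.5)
There is a genuine gap, and it is the one you flag yourself: nothing in the proposal bounds the complexity of the Step~2 data uniformly over flows on $M$, and neither substitute offered in Step~3 works.

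First, Fenley's quasi-geodesic theorem is for flows \emph{without perfect fits}, not for all non-$\R$-covered flows. So there is no general comparison between orbit lengths and geodesic lengths. Even where it applies, the constants are not uniform, so it would not bound the drilled link.

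Second, your compactness argument asserts, rather than proves, the heart of the theorem. A single essential branched surface carries many different laminations. Convergence of $\Lams_i$ in the space of laminations therefore gives no conjugacy of the actions on orbit spaces or universal circles. No available rigidity result says that ``nearby carried laminations give orbit-equivalent flows''; structural stability concerns $C^1$-perturbations of the vector field, not Hausdorff-close laminations. Moreover, the limit of the stable laminations need not be the stable lamination of any flow. A priori it can share a sublamination with the limit of the unstable laminations, which is exactly the degeneration that has to be excluded.

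Finally, the normal-surface idea in your last sentence only yields finitely many carrying branched surfaces. That does not bound the number of tetrahedra in a veering triangulation of a varying link complement. The paper explicitly notes that finiteness of carriers alone does not give finiteness of flows.

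The missing ingredient is a mechanism that turns ``carried by the same branched surfaces'' into ``determined up to orbit equivalence.'' In the paper this requires working with the stable/unstable \emph{pair}. After $\Lams,\Lamu$ are simultaneously normalized to a fixed triangulation, there are finitely many pairs of Kneser branched surfaces. These carriers need not be transverse, because stable and unstable leaves can run parallel through the same normal disks. The paper shows such parallel loci have uniformly bounded size by passing to a common limit sublamination and deriving a contradiction in each case. If the sublamination has no holonomy, the contradiction comes from an Euler class computation, using that the carriers carry no tori. If it has holonomy, the contradiction comes from incompatible perfect-fit and lozenge configurations. The paper then cuts the remaining thin corridors with a uniformly finite system of arcs and uses the Local Lemma to reach finitely many \emph{transverse} pairs $(\Sigma^s,\Sigma^u)$.

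Only at that point is a rigidity statement available. Flows carried by the same transverse pair have the same free homotopy classes of periodic orbits, via a fixed point of a return map on an unstable annulus. Barthel\'e--Frankel--Mann then gives orbit equivalence, since flows on hyperbolic $M$ are transitive and have no trees of scalloped regions. A uniform complexity bound for your veering triangulations would need an argument of comparable strength; as written, Step~3 is essentially equivalent to the theorem itself.
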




In dimension 3, Anosov flows are flows on 3-manifolds that are locally modeled on suspension flows of Anosov homeomorphisms on the torus, with local expansion-contraction properties. Pseudo-Anosov flows behave like Anosov flows away from finitely many singular orbits. They were originally introduced by Thurston as a natural generalization of Anosov flows. Informally, they are locally modeled on suspension flows of pseudo-Anosov homeomorphisms on a surface of genus $g$.

Smale proposed a program to study Anosov flows on closed manifolds as part of his 1966 ICM address. His program concerns existence, abundance, and classification. Since then, the study of Anosov and pseudo-Anosov flows have proven challenging yet fruitful in dimension 3.

Pseudo-Anosov flows have been shown to be abundant on various classes of 3-manifolds. Work of B\'eguin--Bonatti--Yu \cite{BBYabundance} establishes that for any $n\in\N$, there exists 3-manifolds admitting $n$ different pseudo-Anosov flows up to orbit equivalence. Bowden--Mann \cite{BMabundance}, Clay--Pinsky \cite{CPabundance}, B\'eguin--Yu \cite{BYabundance}, and Baldwin--Sivek--Zung \cite{BSZabundance} establish analogous abundance results for various families of 3-manifolds.

 In the 70s and 80s, work of Plante \cite{Plante}, Ghys \cite{Ghys1984}, Barbot \cite{Barbot1996}, and Fried--Thurston \cite{Fried1979} showed finiteness for pA flows in solvable manifolds and Seifert-fibered manifolds. In recent years, there has been much activity and progress on the finiteness conjecture, drawing from techniques at the intersection of several fields of study spanning low-dimensional topology, dynamics, and contact geometry. Work of Barthelm\'e--Bowden--Mann proved the finiteness conjecture for Reeb Anosov flows using the classification of transitive pseudo-Anosov flows up to orbit equivalence via free homotopy classes of closed orbits due to Barthelm\'e--Mann and Barthelm\'e--Frankel--Mann \cite{BM24}\cite{BFMclassification}. Zung \cite{Zungfiniteness} and Baldwin--Sivek--Zung \cite{BSZabundance} extended this to show finiteness for pseudo-Anosov flows admitting positive Birkhoff sections in rational homology spheres. Chaidez--Pan \cite{ChaidezPan} proved the finiteness conjecture for pseudo-Anosov Reeb flows, which they define. Barthelm\'e--Tsang--Zung \cite{noperfectfits} proved finiteness for pseudo-Anosov flows without perfect fits. Barthelm\'e--Paulet \cite{BPfiniteness} show a virtual finiteness result for transitive pseudo-Anosov flows on graph manifolds.

\subsection*{Comments about the general case in dimension 3} Though this paper is focused on closed hyperbolic 3-manifolds, the authors expect that with elaborations of branched surface theory implicit in the work of Tao Li on essential branched surfaces in general manifolds \cite{TaoLiHeegaard} and/or suitable generalizations of Kneser normal form \cite{kneser}, the main result can be obtained for toroidal manifolds up to orbit equivalence. We also expect the same result for finite volume cusped manifolds.

\subsection*{Relationship to other works} The second author presented a proof of this result at a talk given at the Oberwolfach workshop on low-dimensional topology on July 7, 2026. Jonathan Bowden and Vincent Colin posted a preprint \cite{bowdenColin} using different methods to prove similar results.

\subsection*{Idea of the Proof}

The proof of Theorem \ref{Theorem: main theorem} occupies Sections \ref{section: anosov to pA} to \ref{section: local lemma}.

First, to treat the Anosov flows case, we show in Section \ref{section: anosov to pA} that an infinite sequence of Anosov flows on $M^3$ has a subsequence sharing a common closed orbit up to isotopy. Denjoy splitting near this shared orbit produces stable and unstable essential laminations. This step allows us to analyze both Anosov and pseudo-Anosov flows from a pair of transverse laminations.

Then, we put both laminations into Kneser normal form simultaneously in Section \ref{section: sim norm} while preserving their transverse intersections that are flow lines. In the same section, the simultaneous normalization also gives us our first source of finiteness: there are only finitely many pairs of essential normal branched surfaces, Kneser branched surfaces, that carry the stable and unstable laminations. However, this does not prove the finiteness of flows because a fixed pair of essential branched surfaces could carry many different laminations within their fibered neighborhoods. The rest of the proof seeks to reconcile this issue.

To this end, we consider that one key difference between the stable/unstable laminations and the Kneser branched surfaces is the following: the pair of laminations is transverse, but the branched surfaces carrying them may not be transverse. One concern is that perhaps the Kneser branched surfaces could carry stable and unstable laminations with leaves that travel through the same sequences of normal disks, such that the leaves look parallel. In Section \ref{section: parallel locus}, we make this precise by defining the parallel locus and argue that parallel disks of increasing radius in the stable and unstable laminations converge to a common stable/unstable limit sublamination. 

We then argue in Section \ref{section: swaths} that such a common sublamination cannot exist by considering two cases. Either the common sublamination has holonomy or does not. If it has no holonomy, we use an Euler class argument in Subsection \ref{subsection: no holonomy euler class} to reach a contradiction. If it has holonomy, we produce incompatible stable and unstable perfect fits behavior in Subsection \ref{subsection: holonomy swaths} also to reach a contradiction. Therefore, the stable and unstable laminations cannot share large parallel disks after all.

The only remaining possible parallel behavior would be if stable and unstable leaves share long thin corridors. In Section \ref{section: corridor to local}, we construct a finite arc system in Proposition \ref{prop: uniformly finite clean enpoints} and \ref{prop: unifromly finite mother graph of all cuts} to cut each remaining corridor into disks using a uniformly bounded number of cuts each of uniformly bounded length. After splittings and isotopies described in Subsection \ref{subsection: isotoping near cutting arcs}, the stable Kneser branched surface decomposes into finitely many disks, each satisfying the property that along the boundaries of the disks, the unstable Kneser branched surface intersects transversely finitely many times. In Section \ref{section: local lemma}, we prove the Local Lemma in Theorem \ref{thm: local lemma}, which describes how the transverse boundary behavior implies that in a disk region of the stable branched surface, the stable and unstable laminations interact as a trivial product. We then conclude there are only finitely many such branched surface pairs carrying the laminations. These are the first transverse Kneser branched surface pairs we obtain.

Consequently, we conclude that there are only finitely many transverse pairs of stable and unstable Kneser branched surfaces. Finally, we argue that such a transverse pair of branched surfaces determines the flow. In particular, we argue in Proposition \ref{prop: transverse Kneser pair determines flow} that two flows carried by the same transverse pair of branched surfaces have the same spectrum of free homotopy classes of closed orbits. Therefore, using work of Barthelm\'e--Frankel--Mann \cite{BFMclassification}, we have that finitely many transverse Kneser branched surface pairs implies finitely many Anosov and pseudo-Anosov flows.

We use the hyperbolicity of the manifold in a few places. For one, the simultaneous normalization result in Section \ref{section: sim norm} relies on Theorem \ref{thm: Gabai kneser normal form} of the first author, which requires the manifold to be atoroidal. We also use that essential branched surfaces in hyperbolic manifolds do not carry tori in getting a contradiction to the no holonomy limit sublamination in Subsection \ref{subsection: no holonomy euler class}. Lastly, we use that pseudo-Anosov flows on hyperbolic manifolds do not admit trees of scalloped regions when applying Barthelm\'e--Frankel--Mann in Proposition \ref{prop: transverse Kneser pair determines flow}.


\subsection*{Acknowledgments} The authors thank Ian Agol, Sergio Fenley, Katie Mann, Rafael Potrie, Chi Cheuk Tsang, and Mehdi Yazdi for helpful conversations and comments. Special thanks to Chi Cheuk Tsang for numerous constructive comments on an initial draft.

DG was partially supported by the National Science Foundation Grants DMS-2304841 and DMS-2603581. LL was partially supported by the National Science Foundation Grant No. DMS-2304841. Both authors were partially supported by Grant No. DMS-1928930 while in residence at the Simons Laufer Mathematical Sciences Institute in Berkeley, California, during the semester of Spring 2026, where DG was a Clay Senior Scholar and also partially supported by the Clay Foundation and LL was a Program Associate.  The authors are grateful for the stimulating environment and hospitality at SLMath. LL was also supported by a Simons Dissertation Fellowship in Mathematics SFI-MPS-SDF-00015436.

\section{Preliminaries}\label{section: prelim}
\subsection{Anosov and pseudo-Anosov flows}
We give a definition of a topological pseudo-Anosov flow following \cite{BMbook}.
\begin{definition}[Pseudo Anosov Flows]
    A \textit{pseudo-Anosov flow} is a flow $\varphi:M\times \R\rightarrow M$ satisfying the following properties:
    \begin{enumerate}
        \item There are two transverse 2D singular foliations $\mathcal{F}^s, \mathcal{F}^u$ whose leaves are saturated by orbits of $\varphi$ and intersect along orbits. They are called the \textit{stable and unstable foliations}, respectively.
        \item Singularities of $\mathcal{F}^s$ and $\mathcal{F}^u$ coincide and lie along a finite collection of periodic orbits $\alpha_i$. We call these orbits \textit{singular orbits}. A neighborhood of each singular orbit is homeomorphic to a mapping torus of a pseudo-Anosov homeomorphism of a surface near a fixed $p$-pronged singularity. See Figure \ref{fig:prongs denjoy} for an illustration of $\mathcal{F}^s$ and $\mathcal{F}^u$ near a singular orbit.
        \item In a leaf of the stable foliation $\mathcal{F}^s$, any two flow lines converge in forward time and diverge in backward time.
        \item In a leaf of the unstable foliation $\mathcal{F}^u$, any two flow lines diverge in forward time and converge in backward time.
    \end{enumerate}
\end{definition}

\begin{figure}
    \centering
    \includegraphics[width=0.13\linewidth]{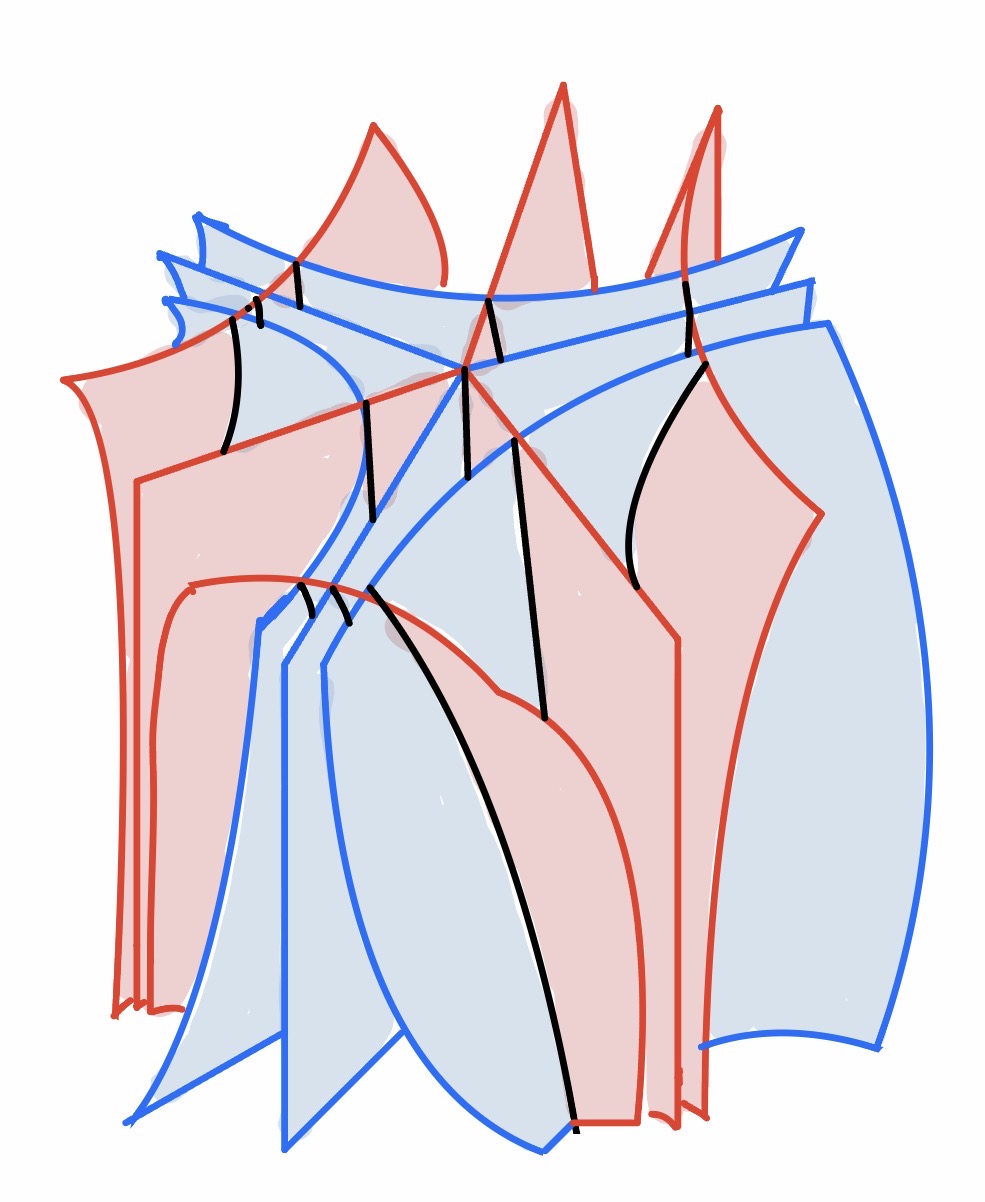}
    \hspace{3em}
    \includegraphics[width = 0.1\linewidth]{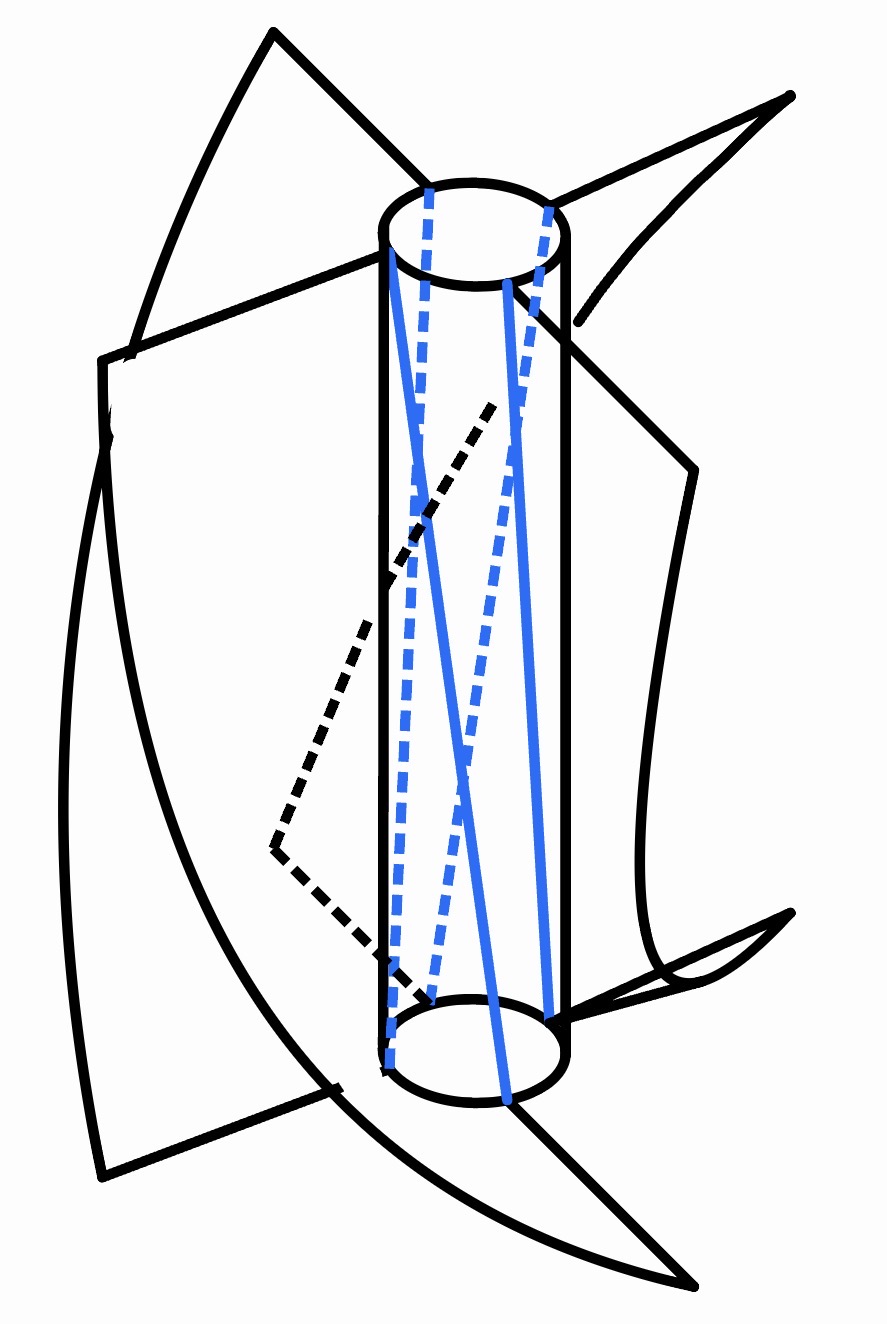}
    \caption{Stable and unstable foliations of a pseudo-Anosov flow near a singular orbit.}
    \label{fig:prongs denjoy}
\end{figure}

\subsection{Normal Surfaces}

Next, we record two theorems of the first author regarding our first source of finiteness. 

\begin{theorem}[Gabai \cite{kneser}, Corollary 6.21]\label{thm: Gabai kneser normal form}
    Let $M$ be a closed orientable atoroidal 3-manifold. There exists a triangulation $\tau$ on $M$ such that any taut foliation or Reebless foliation or nowhere dense essential lamination can be isotoped to be normal to $\tau$.
\end{theorem}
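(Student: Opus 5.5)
The plan is the classical Kneser--Haken normalization route, with the weight of the argument placed on choosing the triangulation well.

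\textbf{Choice of triangulation.} Fix a triangulation $\tau$ of $M$. Atoroidality will be used so that one $\tau$ works uniformly for every object. Concretely, I would take a fine enough subdivision that every incompressible torus or Klein bottle candidate, and every Reeb-type annular obstruction, must be a normal surface of bounded weight. Since $M$ is atoroidal, such surfaces bound solid tori or lie in balls, so they can be excluded once and for all.

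\textbf{Setting up a complexity.} Let $\mathcal{L}$ be a taut foliation, a Reebless foliation, or a nowhere dense essential lamination. First put $\mathcal{L}$ in general position with respect to $\tau$:
\begin{enumerate}
\item $\mathcal{L}$ is transverse to the edges;
\item $\mathcal{L}$ meets the $2$-skeleton in a singular foliation with finitely many tangencies, each of saddle or center type;
\item the leaves avoid the vertices.
\end{enumerate}
Define the complexity as the lexicographic pair (number of tangencies of $\mathcal{L}$ with $\tau^{(2)}$, total transverse measure of edge intersections). The second entry is made finite by using a transverse measure locally, or by counting intersections with a fine family of transversals.

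\textbf{Reducing the complexity.} Normalization then proceeds in three steps.
\begin{enumerate}
\item \emph{Center tangencies on faces.} These are removed by isotopy. Essentiality, or Reeblessness, implies that the circle leaves of the induced face foliation bound disks in leaves, which can be pushed across the face.
\item \emph{Non-disk pieces in tetrahedra.} Inside each tetrahedron, any leaf piece that is not a disk is compressed. Incompressibility of the leaves, together with the absence of Reeb components, lets us replace it by disks, giving an isotopy since leaves are $\pi_1$-injective and $M$ is irreducible.
\item \emph{Bent disks.} Disks meeting an edge twice are pushed across that edge, which strictly decreases the edge complexity.
\end{enumerate}

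\textbf{Main obstacle: infinitely many local moves.} A lamination or foliation has uncountably many leaves, so the moves in steps 1--3 must be performed simultaneously on whole product families. To handle this, I would group leaf pieces into $I$-bundles of parallel disks and act on the bundles. What must be excluded is an infinite sequence of non-stabilizing compressions or edge pushes. Such a sequence would produce, in the limit, a sublamination containing a leaf whose pieces are all bent or non-normal. I expect this limit leaf to force either a torus leaf or a Reeb-type annulus of bounded normal weight. By the choice of $\tau$, atoroidality rules out the first and the taut/Reebless/essential hypothesis rules out the second.

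Nowhere density, in the lamination case, supplies the complementary regions needed to perform the isotopies. Foliations are handled by the same argument, since there the isotopies can be done inside leaf product charts.
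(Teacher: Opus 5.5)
The paper gives no proof of this statement; it is quoted from Gabai's Kneser normal form paper. So your argument has to stand on its own, and it has a genuine gap. What you spell out in detail is Haken's normalization of a single compact incompressible surface: remove centers, compress non-disk pieces, push bent disks across edges. That is the routine part. The content of the theorem is exactly what you defer with ``I expect this limit leaf to force either a torus leaf or a Reeb-type annulus.'' You give no argument for any of the following:
\begin{itemize}
\item that the family moves terminate;
\item that failure to terminate produces a limit leaf all of whose pieces are non-normal;
\item that such a leaf yields a torus or a Reeb annulus;
\item that an infinite composition of the moves would even converge to an isotopy.
\end{itemize}
Consistently with this, atoroidality never does checkable work. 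Refining $\tau$ imposes no bound on the normal weight of any family of tori. An atoroidal manifold has no incompressible tori to exclude in the first place. The ``choice of triangulation'' is never referred to again.

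The bookkeeping that a termination argument needs is also missing. An essential lamination with Cantor-set transversals in general carries no transverse invariant measure; the stable laminations of pseudo-Anosov flows used later in the paper have contracting holonomy and carry none. So ``total transverse measure of edge intersections'' is undefined. A locally chosen measure, or a count over a family of test transversals, is not natural under isotopy and is not shown to decrease under your moves. Each compression or edge push is an isotopy across a guiding ball, and it moves every leaf inside that ball, not just the family being treated. You have not shown that only finitely many product families of non-normal pieces ever arise, or that each move strictly lowers some finite, well-ordered complexity defined on such families (for instance, one read off from a carrying branched surface). Finally, your general-position requirement that leaves avoid the vertices is impossible for a foliation, since every vertex lies on some leaf. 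So taut and Reebless foliations are not covered ``by the same argument''; normality near the $0$-skeleton has to be defined and achieved separately for them. Your remark that laminations need complementary regions while foliations need only product charts is also at odds with treating the two cases uniformly.
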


\begin{theorem}[Gabai \cite{kneser}, Theorem 6.5]\label{thm: Gabai finite essential branched surface no tori carried}
    Let $M$ be a closed orientable atoroidal 3-manifold. The collection of nowhere dense essential laminations on $M$ is carried, up to isotopy, by finitely many essential branched surfaces.
\end{theorem}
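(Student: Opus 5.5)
The plan is to obtain this finiteness from Theorem \ref{thm: Gabai kneser normal form}, by bounding the combinatorics of normal objects relative to the fixed triangulation $\tau$. Let $L$ be a nowhere dense essential lamination on $M$. By Theorem \ref{thm: Gabai kneser normal form}, we isotope $L$ to be normal with respect to $\tau$. Each leaf then meets every tetrahedron in normal triangles and quadrilaterals, and there are at most seven normal disk types per tetrahedron.

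\textbf{Step 1: build a normal branched surface carrying $L$.} Within each tetrahedron $\Delta$, the plaques of $L$ of a given normal type form a closed family of parallel normal disks. We collapse each such family to a single normal disk along a product $I$-fibration, making the collapses compatible across faces. On each face, the normal arcs of a given type are collapsed to a single arc. The result is a branched surface $B(L)$ whose sectors are unions of normal disks, and $L$ is fully carried by $B(L)$ via the $I$-fibered neighborhood $N(B(L))$.

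\textbf{Step 2: finiteness of the normal branched surfaces.} The combinatorial data of $B(L)$ consists of three things: which of the finitely many normal disk types occur in each tetrahedron, how the normal arc types on each face are glued, and the branching direction along each branch locus segment. Since $\tau$ is fixed, only finitely many such data are possible. To make this rigorous, one observation suffices: $B(L)$ is determined up to isotopy by the set of normal disk types used together with the local ordering and branching pattern at the faces. That data is a subset of a fixed finite set.

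\textbf{Step 3 (main obstacle): essentiality.} The branched surface $B(L)$ from Step 2 need not be essential in the sense of Gabai--Oertel. Its complement could contain disks of contact or monogons, its horizontal boundary could compress, and it could carry spheres or tori. I would argue as follows.

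1. \emph{Spheres and tori.} $B(L)$ fully carries the essential lamination $L$, so any carried sphere or torus would yield, after splitting, a sphere or torus leaf. Neither can occur: a sphere leaf is excluded since $L$ is essential, and an incompressible torus leaf is excluded since $M$ is atoroidal.
2. \emph{Monogons and disks of contact.} These can be removed by finitely many splittings of $N(B(L))$ along $L$. The key point is that, because $L$ is nowhere dense and essential, each complementary region of $N(L)$ has incompressible horizontal boundary and no monogons. Hence a Gabai--Oertel style splitting yields an essential branched surface $B'(L)$ carrying $L$.
3. \emph{Uniform control.} This is the real difficulty: the number and type of splittings needed must be bounded uniformly in $L$. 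I would try to show that each such splitting can be taken to be normal. Concretely, a disk of contact or monogon can be isotoped, through normal-surface arguments, to meet each tetrahedron in pieces of bounded complexity. Then $B'(L)$ is again a branched surface of bounded normal complexity, for instance one built from at most a fixed number of copies of each normal disk type per tetrahedron.

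Given this, finitely many essential branched surfaces arise, and together they carry every nowhere dense essential lamination up to isotopy. If the uniform bound in Step 3 resists direct proof, the fallback is to take a \emph{maximal} normal branched surface that uses all compatible disk types. One would then show that essential sub-branched surfaces are detected by finitely many sublaminations of normal disk-type patterns.
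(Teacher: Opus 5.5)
The paper does not prove this statement; it quotes it from Gabai \cite{kneser}. Your outline therefore has to stand on its own, and it has a genuine gap exactly where the content of the theorem lies. Steps~1 and~2 are the standard normal branched surface construction and are fine: $B(L)$ fully carries $L$ and is determined by finitely much combinatorial data relative to $\tau$. Step~3.3, however, is asserted rather than proved. You need the splittings that turn $B(L)$ into an essential branched surface to have complexity bounded independently of $L$, and that bound is the whole difficulty. Those splittings are splittings of $N(B(L))$ \emph{along $L$}, so how far you must split depends on how long neighbouring leaves of $L$ keep running through the same normal disk types. Normality does not control this. For instance, suppose two leaves of $L$ with no leaf between them pass through the same normal types over a disk region, and through different types all around its boundary. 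Then the horizontal disk in the gap between them is a disk of contact of $B(L)$. Removing it means un-collapsing that whole region, whose size in plaques is not bounded in terms of $\tau$; it may revisit the same tetrahedron many times. Knowing that such a disk meets each tetrahedron in pieces of bounded complexity bounds neither how often it returns to a tetrahedron nor how many parallel sheets must be separated. So your argument gives, for each $L$, some essential splitting of one of finitely many $B(L)$. There could still be infinitely many distinct such splittings overall, which is precisely what the theorem rules out. The missing ingredient is a uniform, Kneser-type bound on the parallelity regions one has to split through, and normality of $L$ alone does not supply it.

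Two smaller points. First, Step~3.1 is wrong as argued. Fully carrying $L$ does not turn a torus or sphere carried by $B(L)$ into a leaf of $L$, either before or after splitting. Carried surfaces correspond to nonnegative integral solutions of the branch equations and need have nothing to do with the leaves. Moreover, ``carries no tori'' is not one of the Gabai--Oertel essentiality conditions, apart from the exclusion of Reeb branched surfaces. It is a \emph{consequence} of essentiality: surfaces carried by an essential branched surface are incompressible, so none is a torus when $M$ is atoroidal. That is how the remark after the theorem should be read. Second, your fallback of looking for essential sub-branched surfaces of a maximal normal branched surface does not address the issue. An essential carrier of $L$ is in general a \emph{splitting} of $B(L)$, not a sub-branched surface, and sub-branched surfaces of a non-essential branched surface need not be essential.
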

\begin{remark}
    In closed hyperbolic 3-manifolds, essential branched surfaces do not carry tori.
\end{remark}

\subsection{Limits of Laminations}
Next, we record two results in lamination theory about when a sequence of laminations converge to a limit lamination.

\begin{lemma}\label{lemma: laminations converge to limit}
    Given a sequence of essential laminations $\Lam_i$ carried by the same essential branched surface $\Sigma$, there is a subsequence $\{\Lam_j\}$ that converges to a limit lamination $\Lam_{\lim}$, possibly with branching. Further, we can arrange for the tangent bundles $T\Lam_j$ to also converge to $T\Lam_{\lim}$.\qed
\end{lemma}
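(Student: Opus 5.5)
The plan is to use the fibered neighborhood $N(\Sigma)$ of $\Sigma$ together with a compactness argument on the space of closed subsets of $M$.

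\textbf{Setup.} Fix the interval bundle structure on $N(\Sigma)$, a finite collection of product charts $D_k\times I$ covering $N(\Sigma)$, and a Riemannian metric on $M$. Since each $\Lam_i$ is fully carried in $N(\Sigma)$ transverse to the $I$-fibers, every leaf of $\Lam_i$ meets each chart in plaques that are graphs of sections $D_k\to I$. By a standard isotopy we may assume these graphs have uniformly bounded slope. The bound is inherited from the branched surface: leaves are close to $C^1$-parallel to the tangent planes of $\Sigma$.

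\textbf{Limit as a closed set.} Pass to a subsequence so that $\Lam_j$ converges in the Hausdorff topology on closed subsets of the compact space $N(\Sigma)$; this is possible because that space is compact by Blaschke. Call the limit $\Lam_{\lim}$.

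\textbf{Local structure of the limit.} Work in each chart. Plaques are graphs of uniformly Lipschitz functions $D_k\to I$, so by Arzel\`a--Ascoli any sequence of plaques has a subsequence converging uniformly to a Lipschitz graph. Hence every point of $\Lam_{\lim}$ lies in such a limit graph contained in $\Lam_{\lim}$, and $\Lam_{\lim}$ is a union of Lipschitz graphs in each chart.

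Distinct plaques of a single $\Lam_j$ are disjoint, so in the limit two limit graphs can only touch without crossing transversally. They may, however, coincide on a subset and separate elsewhere. This is exactly the ``possibly with branching'' phenomenon: $\Lam_{\lim}$ is a branched lamination carried by $\Sigma$. Compatibility across overlapping charts holds because the chart transition maps preserve fibers.

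\textbf{Tangent bundles.} First isotope each $\Lam_j$ so that its leaves are $C^1$ with tangent planes uniformly close to the tangent distribution of $N(\Sigma)$, with uniformly bounded $C^{1,\alpha}$ or $C^2$ norm in charts. This can be arranged by smoothing leaves along fibers inside $N(\Sigma)$; it is a choice of representative up to isotopy. Then Arzel\`a--Ascoli applies at the $C^1$ level, so plaques converge in $C^1$. Consequently the tangent planes, viewed as closed subsets of the Grassmann bundle $G_2(TM)$, converge after a further subsequence to $T\Lam_{\lim}$.

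\textbf{Main obstacle.} The hard step is justifying uniform $C^1$ (better, $C^{1,\alpha}$) control on the leaves of all $\Lam_i$ simultaneously. This is what lets tangent bundles converge and guarantees the limit has well-defined tangent planes at branch points. I would handle it by a diagonal argument. First isotope each $\Lam_i$ within the fibers of $N(\Sigma)$ so that the leaves become level sets of a fiberwise-monotone interpolation between finitely many normal-disk model sheets. This gives equicontinuous derivatives. Then extract the convergent subsequence chart by chart.
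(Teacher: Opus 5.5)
The paper states this lemma without proof, so there is no argument of the authors' to compare with. Your route is the standard one: normalize each $\Lambda_i$ by a fiber-preserving isotopy of $N(\Sigma)$, then take Hausdorff limits and apply Arzel\`a--Ascoli to plaques in finitely many product charts. Getting convergence only up to isotopy matches the ``we can arrange'' in the statement and how the paper later uses the lemma.

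\textbf{The gap: the normalizing isotopy.} This step carries all the content, and as written it does not go through.

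The slope bound is not inherited from $\Sigma$. Transversality to the $I$-fibers bounds the slopes of each $\Lambda_i$ separately, not uniformly. For example, fiber-preserving isotopies of a fixed lamination can produce plaques $z=\tfrac12+\tfrac1{10}\tanh(x/\epsilon_i)$. These Hausdorff-converge to a step containing a vertical segment, which is not carried. Near $\partial_v N(\Sigma)$, leaves can also be forced to swerve arbitrarily steeply around the cusps.

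Your proposed normalization does not supply the bound either. Suppose the fiberwise-monotone interpolating function $\theta$ is fixed independently of $i$. Then every $\Lambda_i$ must be a union of level sets of one function on $N(\Sigma)$, and this fails in general. If one carried lamination crosses a sector from the upper sheet at one end to the lower sheet at the other, and another carried lamination does the reverse, the $\theta$-values of the cusps at the two ends would have to be ordered both ways. If instead $\theta$ depends on $i$, nothing you say bounds its level sets uniformly in $i$.

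\textbf{A fix.} Flatten chart by chart and then average. Take product charts over $U_k$ with base fibers $I_{x_k}$, and a partition of unity $\{\rho_k\}$ subordinate to $\{U_k\}$. For $z\in\Lambda_i\cap I_x$, let $h^k_x(z)\in[0,1]$ be the affine fiber coordinate of the point of $I_{x_k}$ reached from $z$ by holonomy of $\Lambda_i$ inside $U_k$. Set $F_x(z)=\sum_k\rho_k(x)h^k_x(z)$ and extend linearly over the gaps.
\begin{itemize}
\item This map is strictly monotone on each fiber and fiber-isotopic to the identity.
\item Each new plaque is locally the graph of $x\mapsto\sum_k\rho_k(x)c_k$ with constants $c_k\in[0,1]$, so all its $C^r$ norms are bounded in terms of the $\rho_k$ alone, uniformly in $i$.
\item Near the branch locus, first push $\Lambda_i$ a uniform distance off $\partial_v N(\Sigma)$ and choose charts and base fibers so the averaged plaques stay in $N(\Sigma)$. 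This is also what keeps the limit carried by $\Sigma$.
\end{itemize}

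\textbf{A second, smaller gap: ordering of limit plaques.} The claim that ``two limit graphs can only touch without crossing'' holds only for limits of plaques along a common subsequence. The point-set limit can have interior, e.g.\ when weights of carried surfaces blow up. Plaques converging along different sub-subsequences then need not be ordered.

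To fix this, pass to a subsequence along which, in each chart, the closed family of plaques of $\Lambda_j$ converges in the Hausdorff topology of the compact space of graphs with $C^{1,\alpha}$ norm at most $K$, metrized by the $C^1$ norm. Every limiting plaque is then a limit of plaques of $\Lambda_j$ for all $j$. Hence any two limiting plaques are weakly ordered and tangent wherever they meet, and their union is the point-set limit. So $T\Lambda_{\lim}$ is single-valued, and $T\Lambda_j\to T\Lambda_{\lim}$ in $G_2(TM)$ follows from the $C^1$ convergence.
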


\begin{remark}
    The limit $\Lam_{\lim}$ may have branched leaves that require splitting to obtain an essential lamination carried by $\Sigma$. See, for example, work of the first author\cite{kneser}.
\end{remark}

\begin{lemma}\label{lemma: disk converge to lam}
    Let $\Sigma$ be a fixed branched surface and let $D_i$ be radius-$R_i$ metric disks contained in laminations $\Lam_i$ carried by $\Sigma$. If $R_i\rightarrow \infty$, then up to passing to a subsequence and possibly taking sub-disks, the disks $D_i$ converge uniformly on bounded leafwise balls to a complete leaf $L_\infty$ carried by $\Sigma$. The closure of $L_\infty$ is a lamination carried by $\Sigma$. Further, if the laminations $\Lambda_i$ and the branched surface $\Sigma$ are normal to a triangulation $\tau$, then the convergence can be made $C^1$ on compact sets.\qed
\end{lemma}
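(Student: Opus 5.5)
The plan is to run a standard compactness argument using the branched surface structure. The key object is the fibered neighborhood $N(\Sigma)$, which is an $I$-bundle over the branch locus, cut into finitely many pieces. Fix a triangulation of $N(\Sigma)$ into finitely many product charts $D_k\times I$. In each chart, every lamination $\Lam_i$ carried by $\Sigma$ meets $D_k\times I$ in a closed family of disks. Each disk is transverse to the $I$-fibers, so it is the graph of a function $D_k\to I$. After isotoping, we may assume these graphs have uniformly bounded $C^1$ norm. When $\Sigma$ and $\Lam_i$ are normal to $\tau$, this bound comes from the disks being normal pieces, which are close to flat.

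First I take a point $x_i\in D_i$ at the center. By compactness of $N(\Sigma)$, I pass to a subsequence so that $x_i\to x_\infty$.

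Next, for each fixed $R$, I cover the leafwise ball $B_R(x_i)\subset D_i$ by a bounded number of plaques. The bound is uniform in $i$ because the plaques have diameter bounded below in the leafwise metric, which is comparable to the metric on the chart bases. The combinatorial sequence of charts visited by these plaques ranges over a finite set. So after a further subsequence, the sequence of charts traversed by $B_R(x_i)$ is constant.

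Within each chart, the plaques are graphs of functions with uniformly bounded derivatives. By Arzel\`a--Ascoli, I pass to a further subsequence along which each plaque converges uniformly; in the normal case the convergence is $C^1$, using the tangent plane convergence of Lemma \ref{lemma: laminations converge to limit}. The limiting plaques glue along the same combinatorial pattern into a disk $B_R^\infty$ that is transverse to the fibers and hence carried by $\Sigma$.

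Then I diagonalize over $R=1,2,3,\dots$. This produces a single subsequence along which the balls converge uniformly on every bounded leafwise ball. The limits are nested: $B_R^\infty\subset B_{R+1}^\infty$. The union $L_\infty=\bigcup_R B_R^\infty$ is a surface immersed transversally to the fibers. It is complete because every leafwise ball of radius $R$ about $x_\infty$ is contained in it; this is where $R_i\to\infty$ is used. The ``sub-disks'' caveat in the statement covers restricting to $B_R(x_i)$.

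The main obstacle is showing that $L_\infty$ is actually a leaf and not merely an immersed surface. It could a priori fail to be embedded: two limit plaques in one chart might cross or touch with branching. Crossing is impossible, since they are uniform limits of disjoint graphs, which are ordered in the $I$-direction, and order is preserved in the limit. Touching without coinciding can occur, and this is the branching allowed in Lemma \ref{lemma: laminations converge to limit}. If it happens, I would resolve it by taking $L_\infty$ to be a single sheet obtained by consistently choosing the topmost limit among coinciding plaques, or else by invoking the splitting of the remark following Lemma \ref{lemma: laminations converge to limit}. Finally, the closure $\overline{L_\infty}$ is a union of limits of such plaques. By the same ordering argument it is a closed set foliated by disjoint graphs in each chart, hence a lamination carried by $\Sigma$.
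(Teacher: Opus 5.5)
The paper states this lemma without proof, closing it with \qed as a standard fact. Your skeleton (finitely many product charts, a uniform count of plaques in $B_R(x_i)$, Arzel\`a--Ascoli, a diagonal subsequence, completeness from $R_i\to\infty$) is the natural argument, but it has a genuine gap at the last step. Passing to the limit in $f_i<g_i$ gives only $f\le g$, so limit plaques cannot cross but need not be disjoint, as you note yourself a few lines earlier. Your final claim, that $\overline{L_\infty}$ is ``foliated by disjoint graphs in each chart'' by ``the same ordering argument,'' therefore does not follow, and it is false in general. Touching already occurs for flat normal triangles of one type whose edge coordinates converge together along one edge of a tetrahedron but stay apart along the other two. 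Smooth sheets can also agree on an open set and then separate. So the image of your limit immersion, and its closure, can be genuinely branched. Choosing a ``topmost'' sheet does not repair this. Coinciding plaques are harmless, since you just take the image. But when two sheets merely touch, or agree and then separate, discarding one either disconnects $L_\infty$ (if both are pieces of it) or leaves the branching of $\overline{L_\infty}$ in place. A pointwise choice at touching points also need not be consistent from chart to chart. What is needed is the splitting you only allude to. Because distinct limit sheets are weakly ordered and never cross, you can split $N(\Sigma)$ open along the touching locus by inserting thin $I$-bundles into the fibers, as in Denjoy splitting. This gives an honest lamination carried by $\Sigma$ in which $L_\infty$ is an embedded leaf. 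The convergence then holds only up to this arbitrarily small fiber-preserving perturbation, which is the caveat in the remark after Lemma~\ref{lemma: laminations converge to limit}, and you should state the conclusion in that form.

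A second problem is the uniform $C^1$ bound you obtain ``after isotoping.'' The hypothesis that $D_i$ is a leafwise metric disk of radius $R_i$ is not isotopy invariant. Take a plaque that is the graph of a function with many concentric ridges of tiny amplitude. It is still transverse to the $I$-fibers, yet every leafwise path from its center to its boundary is very long, so it can contain the whole disk $D_i$. Flattening it collapses $D_i$ to a set of bounded size, and no complete limit leaf can arise from such a sequence. So equicontinuity must be an a priori hypothesis on the $\Lambda_i$, namely a uniform bound on the angle between leaves and $I$-fibers, and cannot be produced by isotopy. Such a bound holds for normal disks in a fixed standard position, but normality by itself is combinatorial and gives no flatness. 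Similarly, a bounded $C^1$ norm yields only $C^0$ subconvergence. Your appeal to the tangent-plane convergence of Lemma~\ref{lemma: laminations converge to limit} supplies the $C^1$ clause only after you pass to a subsequence along which the $\Lambda_i$ themselves converge, and you should make that step explicit.
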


\subsection{Finiteness of Singular orbits}
We also record a result of the second author to use in various parts of the proof.

\begin{theorem}[Li \cite{LiSingularOrbit}, Theorem B]\label{thm: finite singular orbits}
    Let $M$ be a closed, atoroidal 3-manifold. There is a triangulation $\tau$ so that there are only finitely many triangulated solid tori in $\tau$ that can arise as gut regions of stable laminations of pseudo-Anosov flows on $M$.
\end{theorem}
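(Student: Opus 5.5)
Although this theorem is quoted from \cite{LiSingularOrbit}, here is how I would prove it. The plan is to reduce to the finiteness statements of Theorems \ref{thm: Gabai kneser normal form} and \ref{thm: Gabai finite essential branched surface no tori carried}, and then bound the combinatorial size of each gut region. First, fix the triangulation $\tau$ given by Theorem \ref{thm: Gabai kneser normal form}. For a pseudo-Anosov flow on $M$, Denjoy-split the stable foliation $\mathcal{F}^s$ along a closed orbit; this includes the singular orbits, and if there are none, a closed orbit chosen as in the Anosov case. The result is a nowhere dense essential lamination $\Lams$. Its complementary regions are the gut regions: solid tori that are ideal-polygon bundles over the circle, with cusp annuli on the boundary, each containing a singular (or split) orbit as its core. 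By Theorem \ref{thm: Gabai kneser normal form}, $\Lams$ can be isotoped to be normal to $\tau$. By Theorem \ref{thm: Gabai finite essential branched surface no tori carried}, it is then carried by one of finitely many essential branched surfaces $\Sigma_1,\dots,\Sigma_k$, each normal to $\tau$.

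Next I would fix one $\Sigma=\Sigma_i$ with fibered neighborhood $N(\Sigma)$. Each gut region $G$ of a lamination $\Lams$ carried by $\Sigma$ decomposes into two kinds of pieces. The first is a complementary region of $N(\Sigma)$; there are finitely many of these, and each is a union of truncated pieces of $\tau$-simplices. The second is interstitial $I$-bundle pieces lying inside $N(\Sigma)$, bounded by two leaves of $\Lams$ that are parallel along a subsurface of $\Sigma$. As a triangulated solid torus, $G$ is determined by which complementary regions it contains together with the normal-disk sequences of its interstitial pieces. The goal is therefore a uniform bound on the number of normal disks (equivalently, branch sectors) that any interstitial piece of a gut region can traverse.

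The key step, which I expect to be the main obstacle, is this uniform bound. I would argue by contradiction. Suppose we have a sequence of flows whose gut regions contain interstitial $I$-bundles traversing $n_j\to\infty$ normal disks. Since the gut is a polygon bundle over $S^1$, a long interstitial region is a long strip or annulus where two leaves of $\Lams_j$ run parallel, cobounding a cusp of $G$. Taking centers of these long strips and applying Lemma \ref{lemma: disk converge to lam} together with Lemma \ref{lemma: laminations converge to limit}, I would extract a limit leaf carried by $\Sigma$. The difficulty is that a strip may be long but thin, so one cannot directly get large disks. To handle this I would use the flow structure: along a cusp annulus the two boundary leaves are asymptotic in the stable direction. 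The cusp annulus is therefore swept by flow lines running parallel to the core orbit. A long parallel region then wraps many times around the core, and I would try to close it up into a carried annulus whose boundary circles are both isotopic to the core, and then, in the limit, into a torus or Klein bottle carried by $\Sigma$. That contradicts the remark that essential branched surfaces in closed hyperbolic manifolds carry no tori. The alternative is that the limit yields a compact leaf, which is impossible for a stable lamination of a pseudo-Anosov flow, since its leaves are planes, annuli or M\"obius bands.

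Finally, with interstitial lengths uniformly bounded over each of the finitely many $\Sigma_i$, each gut region is a union of boundedly many normal pieces glued in boundedly many ways. This gives only finitely many triangulated solid tori that can arise as gut regions, which is the statement of the theorem.
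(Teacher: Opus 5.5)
The paper only quotes this result and gives no proof of its own, so your argument is judged on its own merits. It has a genuine gap, starting with the object you try to bound. You identify gut regions with the complementary regions of the split lamination, i.e.\ with ideal-polygon bundles over $S^1$. But every cusp of such a region is an interstitial $I$-bundle of \emph{infinite} length: the two sides of a cusp stay parallel along an entire non-compact stable prong of the singular leaf. Since $\overline{M\setminus N(\Sigma)}$ has only finitely many compact components, each cusp eventually lies in $N(\Sigma)$ and runs through infinitely many normal disks. So the uniform bound you set as your goal fails for every pseudo-Anosov flow with a singular orbit. Likewise, the contradiction you try to extract from ``a long strip where two leaves run parallel, cobounding a cusp of $G$'' would, if valid, rule out all of them. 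A gut region in the sense of the theorem has to be a \emph{compact} solid torus obtained by truncating the cusps, e.g.\ along $\partial_v N(\Sigma)$. Once that is done, the pieces that are components of $\overline{M\setminus N(\Sigma)}$ are already finite in number by the finiteness of the $\Sigma_i$. What remains, if anything, is to control compact interstitial pieces joining such components inside a single gut region. These are not cusp regions, and your argument never isolates or bounds them.

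The contradiction mechanism also fails on its own terms. Carried annuli winding around the core exist in every example: each boundary leaf of a complementary region is an annulus or M\"obius band whose core runs parallel to the singular orbit, and the sides of every cusp wind around the core forever. So producing such an annulus contradicts nothing. Nor does anything force it to ``close up'': a limit of carried annuli or long disks (Lemma~\ref{lemma: disk converge to lam}) is a carried lamination, not a closed carried surface. To get a carried torus you would need a nonnegative integral solution of the branch equations, for instance from an invariant transverse measure on a holonomy-free minimal sublamination. The case of nontrivial holonomy would then need a separate argument, as the paper does elsewhere for its limit laminations; none of this appears in your proposal. Finally, your fallback that a compact leaf is impossible because stable leaves are planes, annuli or M\"obius bands applies to each $\Lambda^s_j$ but not to the limit lamination, which is not the stable lamination of any flow. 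Also, a carried closed leaf of genus at least $2$ is not excluded by the no-tori remark.
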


\section{Reducing the Anosov case to pseudo-Anosov}\label{section: anosov to pA}

In this section, we show that if there were infinitely many Anosov flows on a hyperbolic 3-manifold $M$ up to orbit equivalence, then infinitely many share a common closed orbit. Up to taking a double cover, we assume that all the laminations considered in this section are orientable. 

\begin{theorem} \label{theorem: reduce anosov to pA}
    Let $\{\phi_i\}$ be a sequence of Anosov flows on a hyperbolic 3-manifold $M$. Then there is a subsequence $\{\phi_{i_k}\}$ that all share a common closed orbit up to isotopy. 
\end{theorem}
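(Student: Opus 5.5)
The strategy is to make the weak stable foliations of the Anosov flows uniformly normal, pass to a limit lamination, and then use a bounded-length closed orbit of the limiting picture to produce a closed orbit in the same isotopy class for infinitely many $\phi_i$. Since the foliations $\mathcal{F}^s_i$ are Anosov foliations, they are taut foliations without compact leaves; in fact no leaf is a torus, since $M$ is hyperbolic.

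\textbf{Step 1: uniform normal form.} By Theorem \ref{thm: Gabai kneser normal form}, I fix one triangulation $\tau$ of $M$ and isotope each weak stable foliation $\mathcal{F}^s_i$ to be normal to $\tau$. For a foliation, normality is equivalent to being carried by a finite-type branched surface built from the finitely many normal disk types. After passing to a subsequence, all $\mathcal{F}^s_i$ are carried by the same normal branched surface $\Sigma^s$, and all $\mathcal{F}^u_i$ by the same $\Sigma^u$. Applying Lemma \ref{lemma: laminations converge to limit}, I pass to a further subsequence so that $\mathcal{F}^s_i$ converges to a limit lamination $\Lambda^s_{\lim}$ carried by $\Sigma^s$, with tangent planes converging.

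\textbf{Step 2: uniformly short closed orbits.} I aim to find, in each $\mathcal{F}^s_i$, a closed orbit of combinatorial length uniformly bounded with respect to $\tau$. In a stable leaf, every closed orbit is the unique core of an annulus leaf, and orbits are asymptotic in forward time. My first attempt is a pigeonhole argument: take a flow segment in a leaf that passes through a long chain of normal disks. Since there are finitely many disk types, two sub-segments must pass through the same sequence of disk types within bounded length. The contraction in stable leaves should then let me close up via an Anosov-closing-type argument, giving a periodic orbit whose normal-disk itinerary has bounded length.

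Because the flow is only defined up to orbit equivalence, I will instead use a purely foliation-theoretic characterization. A closed orbit corresponds to a nontrivial holonomy loop of $\mathcal{F}^s_i$ that is contracting, and the leaf containing it is an annulus. Periodic orbits are dense, so some leaf meeting a fixed normal disk carries nontrivial holonomy along a loop. The task is then to show that such a loop can be chosen to traverse a bounded number of normal disks.

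\textbf{Step 3: finiteness and isotopy.} There are finitely many normal itineraries of bounded length, so after passing to a subsequence the closed orbits $\gamma_i$ all follow the same itinerary through $\Sigma^s$. They are therefore carried by the same loop in the branched surface, hence freely homotopic, hence isotopic in $M$, since homotopic knots that are transverse to a fibered neighborhood and follow the same branch sequence can be isotoped within $N(\Sigma^s)$. This gives the common closed orbit.

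\textbf{Main obstacle.} The hard step is Step 2: bounding the length of some closed orbit uniformly in $i$, rather than only along a long chain. As a fallback, I would use the limit. The leaf of $\Lambda^s_{\lim}$ through a recurrent point has nontrivial holonomy, because holonomy-free limits would give a transversely measured sublamination carried by $\Sigma^s$. In a hyperbolic manifold this is impossible by an Euler class argument, since such a sublamination would force a carried torus or sphere. A contracting holonomy loop in the limit then persists under $C^1$ convergence, giving a contracting holonomy loop in $\mathcal{F}^s_i$ for all large $i$ along the same itinerary. Contracting holonomy in a stable leaf of an Anosov flow is realized exactly by a closed orbit, which yields the uniformly bounded itinerary needed in Step 3.
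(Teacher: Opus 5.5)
Your fallback in the last paragraph is, in outline, the paper's proof. You put the stable laminations on one branched surface, pass to a limit and a minimal sublamination $\Lambda_{\lim}$, exclude the holonomy-free case by an Euler class argument, and in the holonomy case produce closed leafwise curves in $\Lambda^s_i$ for large $i$; these must be cores of annular leaves and hence closed orbits. Your primary route through a closing-lemma pigeonhole cannot give bounds uniform in $i$, because the closing constants depend on $\phi_i$; so the limit argument is the proof, not a fallback. However, the two load-bearing steps are asserted rather than proved, and the holonomy step fails as stated.

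\textbf{Holonomy case.} What a minimal lamination with holonomy actually gives you is a loop $\gamma_0$ whose holonomy $f_{\gamma_0}$ fixes $p_0$ and contracts only on one side, toward a gap of the transversal. A one-sided attracting fixed point does not survive $C^0$-small perturbation: if $f(x)<x$ on both sides of $p_0$, then $f-\epsilon$ has no fixed point near $p_0$. So ``persists under $C^1$ convergence'' gives nothing. The missing idea is to build a holonomy map sending a compact interval \emph{into itself}, which is an open condition. Let $I_1$ be the component of $I_0\setminus f_{\gamma_0}(I_0)$ on the contracting side. By minimality there is a leafwise path $\gamma_1$ from $p_0$ into $I_1$; choose a small interval $I_2\ni p_0$ with $f_{\gamma_1}(I_2)$ in the interior of $I_1$, and $k$ with $f_{\gamma_0}^k(I_1)\subset I_2$. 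Then $F=f_{\gamma_0}^k\circ f_{\gamma_1}$ maps $I_2$ into itself, and for large $i$ so do the corresponding holonomy maps of $\Lambda^s_i$. Brouwer then gives a closed leafwise curve that follows $\gamma_1$ and winds $k$ times near $\gamma_0$. These curves are isotopic for different $i$ because they are small push-offs of one fixed curve. Your Step 3 reason does not work: a sequence of normal disk types does not record the relative heights of two passages through the same sector (crossing data), so equal itineraries need not give isotopic knots.

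\textbf{Holonomy-free case.} You say a measured sublamination ``would force a carried torus or sphere'' by an Euler class argument, but you never say why the Euler class vanishes. Without that input the claim is false: the fiber of a hyperbolic fibered manifold is a holonomy-free measured lamination carried by an essential branched surface. The input is the flow. The Anosov vector field is a nonvanishing section of $T\mathcal{F}^s_i$. For large $i$ this plane field is within angle $\pi/2$ of an extension $\xi_{\lim}$ of $T\Lambda_{\lim}$ over a neighborhood $U$ of $\Lambda_{\lim}$, so $e(\xi_{\lim})=0$ on $U$. Rational approximations of the measure on a fine splitting give closed carried surfaces $S\subset U$ with $TS$ close to $\xi_{\lim}$, whence $\chi(S)=\langle e(\xi_{\lim}),[S]\rangle=0$. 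The contradiction then needs the branched surface to carry no tori, i.e.\ to be essential. That is why the paper Denjoy splits $\mathcal{F}^s_i$ along a closed orbit, obtaining nowhere dense essential laminations, before applying Theorem \ref{thm: Gabai finite essential branched surface no tori carried} and Lemma \ref{lemma: laminations converge to limit}. The normal branched surface of the normalized foliation in your Step 1 is not known to be essential, and those results are stated only for essential laminations.
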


First, we establish some results about laminations.

\begin{lemma}\label{lemma: homotopic plane fields}
    If two oriented 2-plane fields $\xi_1, \xi_2$ on a closed 3-manifold satisfy 
    \[\angle(\xi_1(x), \xi_2(x))<\pi/2\]
    for every $x\in M$, then $\xi_1, \xi_2$ are homotopic through oriented plane fields.\qed
\end{lemma}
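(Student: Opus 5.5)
The approach is to build an explicit homotopy by moving $\xi_1(x)$ toward $\xi_2(x)$ in the space of oriented planes at each point. The hypothesis rules out antipodal configurations, so the motion is canonical and varies continuously in $x$.

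First, identify oriented 2-plane fields with unit normal vector fields. Fix a Riemannian metric on $M$; the angle in the hypothesis is measured with respect to it. Using the given orientations of $\xi_1,\xi_2$ together with an orientation of $M$, each oriented plane $\xi_j(x)$ has a well-defined positive unit normal $n_j(x)$, and $n_j$ is a continuous section of the unit sphere bundle $STM$. If $M$ is not orientable, work instead with the orientation of the tangent spaces locally; the construction below is canonical, hence independent of local choices, and the same argument goes through. The angle between oriented planes equals the angle between their positive normals. The hypothesis therefore says $\angle(n_1(x),n_2(x))<\pi/2$ for all $x$, and in particular $n_1(x)\neq -n_2(x)$ everywhere.

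Next, define $n_t(x)=\frac{(1-t)n_1(x)+t\,n_2(x)}{\|(1-t)n_1(x)+t\,n_2(x)\|}$ for $t\in[0,1]$. The denominator never vanishes: since $\langle n_1,n_2\rangle>0$, we get
\[
\|(1-t)n_1+tn_2\|^2=(1-t)^2+t^2+2t(1-t)\langle n_1,n_2\rangle>0 .
\]
Only $n_1\neq -n_2$ is actually needed here, which is weaker than the hypothesis. Thus $n_t$ is a continuous family of unit vector fields, jointly continuous in $(x,t)$. Set $\xi_t(x)=n_t(x)^\perp$, oriented so that $n_t$ is its positive normal. This is a continuous family of oriented plane fields interpolating between $\xi_1$ and $\xi_2$, so the two fields are homotopic through oriented plane fields. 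If one works with smooth plane fields, $n_t$ is smooth whenever $n_1,n_2$ are, so the homotopy is smooth as well.

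The main point to check is orientation bookkeeping. One must make sure the plane orientation and the normal orientation correspond consistently, which requires a local orientation of $TM$ used identically for both fields, and that the interpolation does not depend on that choice. It does not: reversing the local orientation of $TM$ negates both $n_1$ and $n_2$, which negates $n_t$ and leaves the induced oriented plane unchanged. Everything else is routine.
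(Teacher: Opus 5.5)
Your argument is correct. It is the standard one the paper takes for granted, since the lemma is stated there without proof: you interpolate the positive unit normals along the short great-circle arc, which is defined because $n_1(x)\neq -n_2(x)$, and your check that the construction does not depend on the local orientation of $TM$ settles the only bookkeeping issue. Your explicit convention that the angle is the oriented one (between positive normals, valued in $[0,\pi]$) is essential, because under the unoriented reading the statement fails, e.g.\ for $\xi$ and $-\xi$ whenever $2e(\xi)\neq 0$.
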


    Let $\{\phi_i\}$ be a sequence of Anosov flows on a hyperbolic 3-manifold $M$. Let $\F^s_i$ be the stable foliation of $\phi_i$. For each $i$, choose a closed orbit and Denjoy split to obtain an essential lamination $\Lam^s_i$. By Gabai's Kneser normal form theorem \cite{kneser}, there are only finitely many essential branched surfaces carrying all the essential laminations in $M$. By the pigeonhole principle, there is a subsequence carried by the same essential branched surface $\Sigma^s$. Then, Lemma \ref{lemma: laminations converge to limit} implies there is a further subsequence that converges to a limit lamination $\Lam_{\lim}$. 

    Replace $\Lam_{\lim}$ by a minimal sublamination of itself, as needed.

\subsection*{Case I: $\Lam_{\lim}$ has no holonomy}\label{subsection: anosov no holonomy}
    If $\Lam_{\lim}$ has no holonomy, there exists a transverse invariant measure on $\Lam_{\lim}$ by work of Candel \cite{Candel1999} generalizing a theorem of Plante \cite{Plante1975}. Such an invariant measure corresponds to a real-valued solution $\vec{x}$ to the branched surface equations given by $\Sigma^s$. Suppose that the system of linear equations give the matrix $A$, then the invariant measure gives $A\vec{x}=0$. 

    Consider the 2-plane field $T\Lam_{\lim}$ to which $\Lam_{\lim}$ is tangent. We show that we can construct two approximating 2-plane fields homotopic to $T\Lam_{\lim}$ with different Euler classes, which gives the contradiction.

        Consider the sequence $\Lam^s_i$ converging to $\Lam_{\lim}$, and extend the tangent 2-plane field $T\Lam^s_i$ to $U$ for each $i$. 
    
    \begin{lemma}\label{lemma: extend lim lam tangent to neighborhood}
        For a sufficiently small neighborhood $U$, there is an extension $\xi_i$ of the 2-plane field $T\Lam^s_i$ to $U$ so that the Euler class $e(\xi_i) = 0$.
    \end{lemma}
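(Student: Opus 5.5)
The plan is to take $U$ to be a small regular neighborhood of $\Lam_{\lim}$ inside the fibered neighborhood $N(\Sigma^s)$. We then show that for $i$ large the extension $\xi_i$ can be built so that it is $C^0$-close to a fixed plane field on $U$ whose Euler class vanishes on $U$. Since $\Lam_{\lim}$ is minimal and has no holonomy, we may choose $U$ so that it deformation retracts onto a branched surface $\Sigma_{\lim}$ that carries $\Lam_{\lim}$ with full support. We take $\Sigma_{\lim}$ to be a sub-branched surface of a splitting of $\Sigma^s$. Because $T\Lam^s_i \to T\Lam_{\lim}$ (Lemma \ref{lemma: laminations converge to limit}), for $i$ large the laminations $\Lam^s_i \cap U$ are carried by $\Sigma_{\lim}$. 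Their tangent planes are then within angle $<\pi/2$ of the tangent planes of $\Sigma_{\lim}$, extended vertically along the $I$-fibers of $U$.

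The first step is to define the comparison plane field. Let $\eta$ be the plane field on $U$ obtained by pulling back the tangent planes of $\Sigma_{\lim}$ along the fibration $U \to \Sigma_{\lim}$. At the branch locus we smooth using the fact that the sheets meeting there are tangent. We extend $T\Lam^s_i$ across the complementary $I$-bundle regions of $U \setminus \Lam^s_i$ by interpolating between the tangent planes of the two boundary leaves. This interpolation is possible because both boundary planes lie in a small cone around $\eta$. Call the result $\xi_i$. By construction $\angle(\xi_i,\eta) < \pi/2$ on $U$, so Lemma \ref{lemma: homotopic plane fields} (applied on $U$, whose proof is local) shows that $\xi_i$ is homotopic to $\eta$. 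Hence $e(\xi_i) = e(\eta)$ in $H^2(U)$.

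The second step is to show $e(\eta) = 0$. Here we use the transverse invariant measure $\vec{x}$ with $A\vec{x} = 0$. This measure gives $\Lam_{\lim}$ the structure of a measured lamination, so $\eta$ restricted to $\Sigma_{\lim}$ is the tangent bundle of a surface carried with weights $\vec{x}$. The Euler class evaluated on any class in $H_2(U) \cong H_2(\Sigma_{\lim})$ is then computed sheet by sheet as an index sum. For a measured lamination this sum equals the weighted Euler characteristic of the carried leaves, which vanishes because the leaves of a minimal lamination without holonomy are noncompact, or else tori, which are excluded by the Remark after Theorem \ref{thm: Gabai finite essential branched surface no tori carried}. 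Alternatively, a no-holonomy lamination admits a nonvanishing tangent vector field along $\Sigma_{\lim}$, obtained by approximating it by an $\R$-covered structure. Such a field gives a section of $\eta$, hence $e(\eta) = 0$.

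I expect the main obstacle to be this vanishing of $e(\eta)$ on all of $H_2(U)$, rather than merely on the carried classes. I would first try to show that $H_2(U)$ is generated by the classes carried with positive weight, using minimality and the positive solution $\vec{x}$. If that fails, I would shrink $U$ so that its second homology is spanned by such classes.
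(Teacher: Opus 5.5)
Your second step fails, and the failure is substantive: nothing in your argument uses the Anosov flow, and without the flow there is no reason for the Euler class to vanish. The claim that the index sum (weighted Euler characteristic) of a minimal measured lamination without holonomy is zero because its leaves are noncompact is false.

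In the present setting you can see this directly. Rational approximations of the positive solution $\vec{x}$ give integer weights $w$ and carried closed surfaces $S_w\subset U$ whose tangent planes lie within angle $<\pi/2$ of $\eta$ (Lemma~\ref{lemma: construct B epsilon}). Hence $e(\eta)[S_w]=e(TS_w)[S_w]=\chi(S_w)$. This is strictly negative precisely because $\Sigma^s$ carries no spheres or tori. More generally, no-holonomy measured laminations with dense noncompact leaves routinely have negative measured Euler characteristic; foliations defined by irrational classes in a fibered cone are an example. Your alternative via an ``$\R$-covered approximation'' fails for the same reason.

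So vanishing on carried classes is not the easy part, with $H_2(U)$ as the obstacle, as you expected. It is the part that is false in general. In the paper, exactly this nonvanishing is the other half of the Case~I contradiction, so your step 2 asserts the negation of what the paper proves next.

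The missing idea is dynamical. The flow vector field $X_i$ is tangent to the stable leaves, so it is a nowhere-vanishing section of $T\Lam^s_i$. After Denjoy splitting, each complementary region of $\Lam^s_i$ is a product $L\times[0,1]$ with $L$ a leaf of the weak stable foliation $\mathcal{F}^{ws}_i$. On that region, extend the plane field by $T_xL\times\{0\}$ and the vector field by $(X_i(x),0)$. These agree with $T\Lam^s_i$ and $X_i$ on the two boundary leaves. The result is an extension $\xi_i$ carrying a global nonvanishing section, so $e(\xi_i)=0$. No comparison field, transverse measure, or analysis of $H_2(U)$ is needed.

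Your first step, homotoping to a reference field via the angle bound of Lemma~\ref{lemma: homotopic plane fields}, is essentially the paper's next step. There it transfers $e(\xi_i)=0$ to $\xi_{\lim}$. So it is not wrong, but it only moves the problem: it cannot supply the vanishing, which has to come from the flow.
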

    \begin{proof}
        Consider the weak stable foliation of the sequence of Anosov flows $T\mathcal{F}^{ws}_i=E^s_i\oplus\langle X_i\rangle$. After Denjoy splitting, we consider each complementary region of $\Lam_i$ as a copy of the product $P=L\times [0,1]$ for some leaf $L\subset\mathcal{F}^{ws}_i$. Then we decompose the tangent space at $P$ as $T_{(x,t)}P = T_xL\times T_t[0,1]$ and define a plane field on $P$ by $(\xi_i)_{(x,t)}=T_xL\times \{0\}$. It's compatible with the tangent 2-plane field $T\Lam^s_i$, namely that $T\Lam^s_i$ matches $\xi$ on the boundary leaves $L\times \{0\}, L\times \{1\}$. Therefore $\xi$ and $\Lam^s_i$ glue to a 2-plane field on a neighborhood $U$ of $\Lam_{\lim}$.

        The Anosov vector field in the flow direction also extends into the complementary region. Define $\widetilde{X}$ on the product region $P$ by $\widetilde{X}(x,t)=(X(x), 0)$. On the points of $\Lam^s_i$, assign $\widetilde{X}|_{\Lam^s_i}=X|_{\Lam^s_i}$. Then $\widetilde{X}$ defines a nowhere vanishing vector field tangent to $\xi_i$, which is a global section $\widetilde{X}\in\Gamma(\xi_i)$. Since $\xi_i$ is a 2-plane bundle with a nonzero section, we conclude that $e(\xi_i)=0$.
    \end{proof}

    Next, we extend $T\Lam_{\lim}$ to $U$, call it $\xi_{\lim}$, such that for sufficiently large $i$, $\xi_i, \xi_{\lim}$ are homotopic. To do this, consider that from the convergence of the stable plane fields $\xi_i$ to $\Lam_{\lim}$, for sufficiently large $i$, we have
    \[\angle(T_x\Lam_{\lim}, (\xi_i)_x)<\frac{\pi}{2}.\]
    Then since $\Lam_{\lim}$ is closed in $U$ and the condition $\angle(T_x\Lam_{\lim}, (\xi_i)_x)<\frac{\pi}{2}$ is open, we can extend the 2-plane field $T\Lam_{\lim}$ to $\xi_{\lim}$ on $U$ after shrinking $U$ as necessary so that for every $x\in U$, we have
    \[\angle(\xi_i, \xi_{\lim})<\frac{\pi}{2}.\]

    By Lemma \ref{lemma: homotopic plane fields}, $\xi_i$ and $\xi_{\lim}$ are homotopic as 2-plane fields in $U$. Since $e(\xi_i)=0$, we conclude that $e(\xi_{\lim}) = 0$. We have thus constructed the first plane field $\xi_{\lim}$ on a small neighborhood $U$ of $\Lam_{\lim}$.

    The second approximating 2-plane field $\xi_\epsilon$ is constructed from modifying $\xi_{\lim}$ using the sequence of rational approximations $\vec{x_i}$. 
        
        The following lemma establishes that integral solutions to the branch equations of a splitting of $\Sigma$ give rise to carried surfaces with tangent field closely approximating $T\Lam_{\lim}$.
\begin{lemma}\label{lemma: construct B epsilon}
    Let $\Sigma$ be a branched surface carrying a lamination $\Lambda$ and $\xi$ be a plane field extending $T\Lam$ to a fibered neighborhood $N(\Sigma)$ of $\Sigma$, so that $\xi=T\Lam$ where defined. For any $\epsilon>0$, we can construct a splitting of $\Sigma$ into a smooth branched surface $\Sigma_\epsilon\subset U$ carrying $\Lam_{\lim}$ with the following properties:
    \begin{enumerate}
        \item For any point $x$ in $\Sigma_\epsilon$, we have 
        \[\angle(T_x\Sigma_\epsilon, \xi_x)<\epsilon;\]
        \item For any nonnegative integral weight $w$ on $\Sigma_\epsilon$, there is a smooth carried surface $S_w\subset N(\Sigma_\epsilon)$ realizing $w$ that satisfies
        \[\angle(T_xS_w, \xi_x)<\epsilon.\]
    \end{enumerate}\qed
\end{lemma}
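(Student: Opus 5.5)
The plan is to build $\Sigma_\epsilon$ by splitting $\Sigma$ along $\Lam$ finely enough that every sector becomes a thin, nearly flat piece of a leaf of $\Lam$. Only after that will we smooth the branch loci and control the carried surfaces.

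\textbf{Step 1: uniform local flatness.} Fix a compact fibered neighborhood $N(\Sigma)$ with $I$-fibers transverse to $\xi$. Since $\xi$ is continuous on the compact set $N(\Sigma)$ and equals $T\Lam$ on $\Lam$, uniform continuity gives $\delta>0$ with the following property: on any product chart $D\times I$ of $N(\Sigma)$ of diameter less than $\delta$, all planes $\xi_x$ lie within $\epsilon/3$ of one another. Cover $N(\Sigma)$ by finitely many such charts, with $\delta$ chosen so that each leaf of $\Lam$ meets each chart in plaques that are graphs over $D$ with slope within $\epsilon/3$ of $\xi$.

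\textbf{Step 2: splitting.} Split $\Sigma$ along $\Lam$ (the standard splitting, as in \cite{kneser}) until every $I$-fiber of the new fibered neighborhood $N(\Sigma_\epsilon)\subset N(\Sigma)\cap U$ has length below $\delta$. The complementary regions of $\Lam$ in $N(\Sigma)$ are $I$-bundles away from a compact part, so this is possible by peeling off thick parts of the interstitial regions. The splitting preserves carrying, so $\Sigma_\epsilon$ still carries $\Lam$ (and $\Lam_{\lim}$, in the application). Each chart from Step 1 then contains only thin sheets, and each sheet lies within $\delta$ of a plaque of $\Lam$. Choose $\Sigma_\epsilon$ itself as a union of plaques of $\Lam$ (one representative leaf per sector), with the branch loci smoothed in a collar of width $\ll\delta$. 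The smoothing is a small interpolation between two planes that are already within $\epsilon/3$ of $\xi$, so it gives (1).

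\textbf{Step 3: carried surfaces.} For an integral weight $w$, build $S_w$ sector by sector. Over a sector $S$ with weight $w(S)$, place $w(S)$ parallel copies of $S$, obtained by pushing along the $I$-fibers. Each copy is a graph over $S$ inside an $I$-bundle of fiber length below $\delta$. Over the branch locus, glue the copies by the usual cut-and-paste, using fiber-direction interpolation. Because the copies are pushed along fibers transverse to $\xi$ and the fibers are short, their tangent planes stay within $\epsilon/3$ of $T\Sigma_\epsilon$. Combined with the $\epsilon/3$ control of $\xi$ across a chart, this gives $\angle(T_xS_w,\xi_x)<\epsilon$.

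\textbf{Main obstacle.} The delicate point is Step 3 for arbitrarily large weights: $w(S)$ copies must fit in a fiber of length below $\delta$ while their derivatives stay controlled. The remedy is to take the copies as level sets $\{t=c_k\}$ in a fixed product chart $S\times I$ in which the horizontal foliation is $C^1$-close to $\xi$. Every level set then has the same tangent control, independent of how many levels are used. This requires the product structure on $N(\Sigma_\epsilon)$ to be chosen so that its horizontal slices are uniformly $\epsilon/3$-close to $\xi$, which is what Step 1 provides.
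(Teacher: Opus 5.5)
The paper states this lemma with \qed and gives no proof, so there is no argument of the authors to compare with. Judged on its own, your outline has a reasonable skeleton: split along $\Lambda$ until $N(\Sigma_\epsilon)$ is a thin neighborhood of $\Lambda$, then take sheets close to plaques of $\Lambda$. But the step that carries all the content is not established. That step is the $C^1$ control at the branch locus and for arbitrarily many stacked sheets.

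Twice you infer tangent-plane control from $C^0$ smallness. In Step 2 the smoothing interpolates between two graphs whose planes are within $\epsilon/3$ of $\xi$. However, the slope of $\phi f_1+(1-\phi)f_2$ also contains the term $(f_1-f_2)\,d\phi$, of size (vertical offset)/(collar width). The representative plaques of $S_1,S_2,S_3$ at a branch arc can be offset by a sizable fraction of the fiber length. With fiber length up to $\delta$ and a collar of width $\ll\delta$, as you prescribe, this term can be of order $\delta/(\text{collar width})\gg 1$, so (1) does not follow.

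Likewise in Step 3, ``pushed along short fibers'' bounds the size of the push, not its derivative. So it does not keep $TS_w$ near $T\Sigma_\epsilon$. Your remedy in the last paragraph, level sets of a product structure with slices close to $\xi$, has two problems:
\begin{enumerate}
\item Step 1 does not produce such a structure; it only gives uniform continuity of $\xi$ and slope bounds for leaves.
\item It works only over sector interiors. At a branch arc where $S_1,S_2$ merge into $S_3$, the top $w(S_1)$ of the $w(S_3)$ sheets must be re-stacked above the cusp, and there is no product chart there. The level functions must change, and the slope cost is (fiber length)$\times$(gradient of the level change). Your choice of scales leaves this uncontrolled, and this is exactly where arbitrarily large weights matter.
\end{enumerate}

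What is missing is a separation of scales, together with an interpolation whose slope does not depend on the number of sheets. The steps would be:
\begin{enumerate}
\item Fix a horizontal scale $\rho$ from the modulus of continuity of $\xi$.
\item Arrange the splitting so that every branch arc of $\Sigma_\epsilon$ has a collar of width comparable to $\rho$. For instance, build $\Sigma_\epsilon$ from thin clumps of plaques of $\Lambda$ over a fixed cellulation of $\Sigma$ of mesh $\rho$.
\item Only then choose the fiber length $\eta\ll\epsilon\rho$. This uses that $\Lambda$ is nowhere dense, which you should state; it holds for a minimal $\Lambda_{\lim}\neq M$.
\item In coordinates where the fibers are vertical, take sheets $h_t=(1-t)f_0+tf_1$ with $t$ constant. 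Here $f_0<f_1$ are guide plaques of $\Lambda$: the top and bottom plaques of a sector, and near a branch arc, the two plaques bounding the cusp.
\end{enumerate}
Then $dh_t=(1-t)\,df_0+t\,df_1$ is a convex combination of leaf slopes, hence within $\epsilon/3$ of $\xi$ however many levels are used. Switching guides or merging a sheet into another across a collar of width $\rho$ costs only $O(\eta/\rho)$ in slope. Order is preserved under such convex combinations, so the sheets stay embedded. This gives (1) and (2) uniformly in $w$.

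A smaller point: ``one representative leaf per sector'' need not be a section over a non-disk sector, since a plaque can be an infinite cover. Taking the topmost plaque, or working over disk cells, avoids this.
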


    Fixing $\epsilon<\frac{\pi}{2}$, construct the corresponding branched surface $\Sigma_\epsilon$ using Lemma \ref{lemma: construct B epsilon}. Since $\Sigma_\epsilon$ still carries the lamination $\Lam_{\lim}$, we obtain a positive real-valued solution $\vec{x_\epsilon}\in\R^k$ to the branch equations $A_\epsilon\vec{x_i}=0$ subject to the branching behavior of $\Sigma_\epsilon$. There is a sequence of rational approximations $\vec{x_i}$ so that $A_\epsilon\vec{x_i}=0$ for all $i$, and $\lim_{i\rightarrow\infty} \vec{x_i}=\vec{x_\epsilon}$. Each  $\vec{x_i}$ gives a lamination $\Lam_i$ by closed embedded surfaces. Given a rational approximation $\vec{x_i}$, we can clear denominators to obtain integer weights $w_i$. Then invoke Lemma \ref{lemma: construct B epsilon} to construct smooth surfaces $S_{w_i}$ realizing $w_i$ and satisfying 
    \[\angle(T_xS_{w_i}, (\xi_{\lim})_x)<\epsilon.\]

    Since $\epsilon<\frac{\pi}{2}$, Lemma \ref{lemma: homotopic plane fields} shows that $TS_{w_i}, (\xi_{\lim})|_{S_{w_i}}$ are homotopic as 2-plane fields on $S_{w_i}$. Now homotope $\xi_{\lim}$ in a neighborhood of $S_{w_i}\subset U$ to match $TS_{w_i}$, and we obtain a homotopic plane field $\xi_\epsilon$ on $U$. 

    Consider that the bundle $TS_{w_i}$ is the pullback bundle $i^*\xi_{\lim}$ where $i:S_{w_i}\hookrightarrow U$ is the inclusion map. By Gabai's result as recorded in Theorem \ref{thm: Gabai finite essential branched surface no tori carried}, the branched surface $\Sigma$ carries no spheres or tori, which implies that its splitting $\Sigma_\epsilon$ also carries no spheres or tori. Therefore, any surface $S_{w_i}$ must be of genus at least 2, with Euler characteristic $\chi(S_{w_i})\neq 0$. Therefore, the Euler class $e(TS_{w_i})\neq 0$. This is a contradiction because as a pullback bundle, $TS_{w_i}$ should satisfy $e(TS_{w_i}) = e(i^*\xi_{\lim})=0$. This is a contradiction.

\subsection*{Case II: $\Lam_{\lim}$ has nontrivial holonomy} 

In this subsection, we establish the following result. 

\begin{prop}
    Let $\Lam^s_i$ be the subsequence of stable laminations converging to the limit lamination $\Lam^s_{\lim}$. If there is an oriented closed embedded curve $\gamma_0\subset L$ in $\Lam_{\lim}$ with non-trivial holonomy, then there exists $N\in\N$ such that for all $i>N$, each $\Lam^s_i$ contains a closed curve $\gamma_i\subset L_i$ representing the same isotopy class $[\gamma_i]=[\gamma_j]$.
\end{prop}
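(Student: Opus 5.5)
The argument is a stability statement for closed curves with nontrivial holonomy, combined with the convergence $\Lam^s_i\to\Lam^s_{\lim}$ and the $C^1$ convergence of tangent planes from Lemma \ref{lemma: laminations converge to limit}. The plan is to show that $\gamma_0$ persists, up to small perturbation, in every $\Lam^s_i$ with $i$ large. Since the nearby curves all lie in a fixed small tubular neighborhood of $\gamma_0$, they all represent the same isotopy class.

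First I set up a transversal. I fix a small embedded annulus or disk $\tau$ transverse to $\Lam^s_{\lim}$, passing through a point $p\in\gamma_0$. Since $\Lam^s_{\lim}$ is carried by $\Sigma^s$, I can take $\tau$ inside the $I$-fiber of $N(\Sigma^s)$ through $p$. The holonomy of $\gamma_0$ is then a germ $h_0$ of an interval map, defined on a half-neighborhood $J\subset\tau$ of $p$. It fixes $p$, is nontrivial, and is contracting on one side, say $h_0(x)<x$ for $x\in J\setminus\{p\}$ near $p$. To get this I may replace $\gamma_0$ by $\gamma_0^{-1}$, and I may shrink $J$ so that $h_0$ has no fixed points in $J$ other than $p$. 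The laminations $\Lam^s_i$ are orientable, since we passed to a double cover, so all holonomies are orientation preserving on $\tau$.

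Second, I transfer the holonomy to the nearby laminations. I lift $\gamma_0$ to a path $\gamma_0'$ in the tie-neighborhood $N(\Sigma^s)$ and follow the $I$-fibers along it. This defines, for each leaf of $\Lam^s_i$ meeting $\tau$ near $p$, a partially defined holonomy map $h_i$ along $\gamma_0$. Because the $\Lam^s_i$ converge to $\Lam^s_{\lim}$ together with their tangent fields, the maps $h_i$ converge to $h_0$ on compact subintervals of $J$. I choose points $a<p<b$ (or $a<p$ only, in the one-sided case) with $h_0(a)>a$ and $h_0(b)<b$, and with $a,b$ in the domain where $h_0$ is defined. These are strict inequalities at fixed points, so they persist for $h_i$ once $i>N$. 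Applying the intermediate value theorem to the monotone map $h_i$ on $[\Lam^s_i\cap[a,b]]$ then gives a point $q_i$ with $h_i(q_i)=q_i$, and this step is where care is needed. The set $\Lam^s_i\cap\tau$ is closed but possibly a Cantor set, and $h_i$ is defined only there. The remedy is to extend $h_i$ monotonically across the gaps, which are complementary intervals of $\Lam^s_i$ mapping to complementary intervals. If a fixed point lands in a gap, its endpoints are then leaves mapped to themselves, which still gives a fixed leaf-point.

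Third, the fixed point $q_i$ yields a closed curve $\gamma_i\subset L_i$, the lift of $\gamma_0$ through $q_i$. It lies in an $\epsilon$-neighborhood of $\gamma_0$ and is transverse to the $I$-fibers, so it is a section of the tubular neighborhood and hence isotopic to $\gamma_0$ in $M$. This gives $[\gamma_i]=[\gamma_0]=[\gamma_j]$ for all $i,j>N$. The closed curve is embedded, since it is $C^0$-close to the embedded $\gamma_0$.

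I expect the main obstacle to be the second step: making the holonomy maps $h_i$ well defined and convergent when $\Lam^s_{\lim}$ is a branched limit. The limit may have leaves that merge, and $\gamma_0$ might pass through branch loci of $\Sigma^s$. My first approach is to work in $N(\Sigma^s)$ and use only the $I$-fiber structure, which is uniform in $i$. I would also use the $C^1$ convergence from Lemma \ref{lemma: disk converge to lam} on the compact curve $\gamma_0$ to control the lifts. This keeps all the lifts inside a single chain of fibered charts covering $\gamma_0$.
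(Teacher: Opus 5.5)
There is a genuine gap in your second step. The intermediate value argument needs points $a<p<b$ with $h_0(a)>a$ and $h_0(b)<b$, that is, $h_0-\mathrm{id}$ must change sign at $p$. Nontrivial holonomy does not give you this.

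The fixed point $p$ may be contracting on one side and expanding, trivial, or undefined on the other (for instance when $p$ lies on a boundary leaf). This is exactly the one-sided situation you mention, and there no such $a$ exists. Your fallback (using only one side) does not rescue it. The point $p$ belongs to $\Lam^s_{\lim}$, not to $\Lam^s_i$. Leaves of $\Lam^s_i$ can lie on both sides of $p$, and the extended $h_i$ can push $p$ out of $[p,b]$.

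Such a semi-stable fixed point is not robust under perturbation. For example, $h_0(x)=x-x^2$ is uniformly approximated by $h_i(x)=x-x^2-1/i$, which has no fixed point at all. So $\Lam^s_i$ need not contain any closed curve near $\gamma_0$, and the claim that $\gamma_0$ itself persists up to isotopy is false in general. A symptom of this is that your argument never uses the minimality of $\Lam_{\lim}$.

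The missing idea, which is what the paper uses, is to trade $\gamma_0$ for a loop whose holonomy maps a whole closed transversal into its own interior. That is a $C^0$-open condition, so it survives passage to $\Lam^s_i$. The construction goes as follows.
\begin{itemize}
\item Reduce to one-sided contracting holonomy at $p_0$, and let $I_1$ be the fundamental domain of $I_0\setminus f_{\gamma_0}(I_0)$ on the contracting side.
\item By minimality, the leaf $L$ returns to the interior of $I_1$, so there is a leafwise path $\gamma_1$ from $p_0$ to a point $p_1\in\mathrm{int}(I_1)$.
\item Take a small transversal $I_2$ about $p_0$ with $f_{\gamma_1}(I_2)\subset I_1$, and choose $k$ with $f_{\gamma_0}^k(I_1)\subset\mathrm{int}(I_2)$.
\item Then $F=f_{\gamma_0}^k\circ f_{\gamma_1}$ maps $I_2$ into its interior. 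This persists for the corresponding holonomies $F_i$ of $\Lam^s_i$ once $i$ is large.
\end{itemize}
Your monotone extension over the gaps, together with the intermediate value theorem, then does give a fixed point of $F_i$. The resulting closed curves follow $\gamma_1$ and then wind $k$ times around $\gamma_0$. They all lie in a fixed small neighborhood of one fixed loop as sections transverse to the fibers, hence are mutually isotopic. Their class is generally not $[\gamma_0]$, but the statement only asks for $[\gamma_i]=[\gamma_j]$.
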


By definition, we can find a transversal $I_0$ to $\gamma_0$ so that the holonomy transport gives a map $f_{\gamma_0}:I_0\rightarrow I_0$ satisfying $f_{\gamma_0}(I_0)\subset I_0$. We can further find a fixed point of $f_{\gamma_0}$ and obtain a closed curve with one-sided contracting holonomy. Replace $\gamma_0$ with that curve.

Let $I_1$ be the component of $I_0\setminus f_{\gamma_0}(I_0)$ lying to the contracting side of $\gamma_0$. Since the limit lamination is minimal, the leaf $L$ containing $\gamma_0$ meets the interior of the transversal $I_1$. There is a leafwise path $\gamma_1\subset L$ from $p_0\subset \gamma_0\cap I_0$ to $pi\in I_1\cap L$. Consider a smaller transversal $I_2\subset I_0$ so that the parallel transport $f_{\gamma_1}(I_2)\subset I_1$. See Figure \ref{fig: holonomy spinning} for an illustration. 

In the sequence $\{\Lam_i^s\}$, there exists $N_1\in\N$ so that for $i>N_1$, we can perform an ambient isotopy so that $\gamma_1\subset \Lam^s_i$. Choose a compact tubular neighborhood $U$ of $\gamma_1$ and arrange the isotopies to be $C^1$ small and supported on $U$. Then, after possibly shrinking $I_2$ so that the closure of $f_{\gamma_1}(I_2)$ is contained in the interior of $I_1$, the holonomy maps $f_{\gamma_1}^{s,i}:I_2^{(i)}\rightarrow I_1$ satisfy $f_{\gamma_1}^{s,i}(I_2^{(i)})\subset I_1$. 

Since $\gamma_0$ in the limit has one-sided contracting holonomy, and $I_1, I_2$ lie on the contracting side, there exists $k\in \N$ so that an iterate $f_{\gamma_0}^k(I_1)\subset I_2$. This means $F = f_{\gamma_0}^k\circ f_{\gamma_1}:I_2\rightarrow I_2$ satisfies $F(I_2)\subset I_2$. For sufficiently large i, after the small ambient isotopies identifying the same curves $\gamma_0, \gamma_1$ with curves lying in $\Lam_i^s$, the corresponding holonomy maps satisfy $(f_{\gamma_0}^{s,i})^k(I_1)\subset I_2$ for $i>N_2$. For $i>\max(N_1, N_2)$, we have the holonomy transport $F^s_i=(f_{\gamma_0}^{s,i})^k\circ f_{\gamma_1}^{s,i}:I_2\rightarrow I_2$ satisfying $F_i^s(I_2)\subset I_2$. Since $I_2$ is compact, by the Brouwer fixed point theorem, there is a point $p_i^s\in I_2$ with $F_i^s(p_i^s)=p_i^s$, and the parallel transport along $p_i^s$ yields a closed curve $\gamma^s_i\subset\Lam^s_i$ for each $i$. This curve first follows $\gamma_1$ and then winds around a local copy of $\gamma_0$ $k$ times, so all $\gamma_i^s$ are of the same isotopy class. We have found curves $\gamma_i^s\subset \Lam^s_i$ such that $[\gamma_i^s]=[\gamma_j^s]$ for all sufficiently large $i,j>\max(N_1, N_2)$.

For sufficiently large $i$, the curve $[\gamma_i^s]$ is a transverse pushoff of the curve $[\gamma^s]$ obtained in the limit from a fixed point of $F: I_2\rightarrow I_2$. So $[\gamma_i^s]$ are further isotopic.

The leaves $L_i$ containing the curves $[\gamma_i^s]$ are therefore annular leaves with $[\gamma_i^s]$ as their core curves. By the Anosov dynamics, the core curve of an annular stable leaf represents a closed orbit of the flow. Therefore, for sufficiently large $i$, the flows $\phi_i$ share an isotopic closed orbit.

In the pseudo-Anosov case, we use a result of the second author \cite{LiSingularOrbit} to establish that an infinite sequence of pseudo-Anosov flows on the same hyperbolic manifold will also contain a subsequence with isotopic links arising as singular orbits.

\begin{theorem}[Li \cite{LiSingularOrbit}, Theorem A]
    Let $M$ be a closed atoroidal manifold. Then the class of isotopy classes of curves representing singular orbits of pseudo-Anosov flows on M is finite. 
\end{theorem}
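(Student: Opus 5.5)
The plan is to realize every singular orbit as the core of a gut region of the stable lamination, and then apply the finiteness of gut regions from Theorem \ref{thm: finite singular orbits}. Fix the triangulation $\tau$ given there, which we may take compatible with Theorem \ref{thm: Gabai kneser normal form}.

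\textbf{Step 1: from a flow to a normal lamination.} Let $\phi$ be a pseudo-Anosov flow on $M$ with singular orbits $\alpha_1,\dots,\alpha_n$. Denjoy split $\mathcal{F}^s$ along the singular leaves, together with one further closed orbit if needed so that the result is nowhere dense. This produces an essential lamination $\Lam^s$. Because $M$ is hyperbolic, hence atoroidal, Theorem \ref{thm: Gabai kneser normal form} lets us isotope $\Lam^s$ to be normal to $\tau$.

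\textbf{Step 2: singular orbits sit in the guts.} Near each $\alpha_j$ the lamination has a $p_j$-pronged complementary region. I will identify this region as a solid torus $V_j$ whose core is isotopic to $\alpha_j$, with interstitial annuli running along the prong boundaries. These $V_j$ are exactly the gut regions referred to in Theorem \ref{thm: finite singular orbits}. After normalization, each gut region is a triangulated solid torus in $\tau$, well defined up to isotopy. This gives two facts:
\begin{enumerate}
\item the isotopy class of the link $\bigcup_j\alpha_j$ is determined by the collection of gut solid tori;
\item $n$ is bounded by the number of gut regions.
\end{enumerate}

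\textbf{Step 3: counting.} Theorem \ref{thm: finite singular orbits} provides only finitely many triangulated solid tori that can occur as guts. Each singular orbit is isotopic to the core of one of them, so only finitely many isotopy classes of curves arise as singular orbits. Since the number of components is also bounded, only finitely many links arise as well.

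\textbf{Main obstacle.} The hard part is Step 2: showing that the core of the normalized gut region is still isotopic to the original singular orbit. The difficulty is that the isotopy into normal form can a priori move the guts in complicated ways. I would first try tracking the guts through Gabai's normalization as a proper isotopy of the pair $(M,\Lam^s)$. The key tool is that a complementary region's core is determined up to isotopy by the region itself, since the region deformation retracts to its core. I would also need to rule out that normalization merges the guts or alters their topology. For that I would use essentiality: the complementary regions carry no disks of contact, and their guts are solid tori with at least two cusps.
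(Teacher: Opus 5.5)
The paper gives no proof of this statement. It imports it from \cite{LiSingularOrbit}, together with Theorem~\ref{thm: finite singular orbits} from the same source, so there is no internal argument to compare against.

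Your route, deriving the statement as a corollary of Theorem~\ref{thm: finite singular orbits}, is sound. The orbit $\alpha_j$ is the core of the $p_j$-cusped complementary region of the split stable lamination. The gut is a compact solid torus whose complement in that region consists of $I$-bundles over half-open annuli, attached along annuli in the gut's boundary. Hence the gut's core is isotopic to $\alpha_j$, and finitely many possible guts give finitely many isotopy classes.

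What you single out as the main obstacle is not one. The normalization (Theorem~\ref{thm: Gabai kneser normal form}, or the one built into Theorem~\ref{thm: finite singular orbits}) is an ambient isotopy of $M$. It therefore carries each complementary region homeomorphically onto a complementary region of the normal lamination, and carries $\alpha_j$ along to that region's core. Nothing can merge or change topology.

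One justification should be corrected. A deformation retraction of the region onto its core only gives a homotopy, and a knotted curve in a solid torus can be homotopic to the core without being isotopic to it. The right reason is the product structure of the interstitial pieces: it exhibits the region as the gut with product ends attached, so the cores agree up to isotopy.

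Minor points:
\begin{enumerate}
\item The hypothesis here is atoroidal, not hyperbolic, and atoroidality is what Theorem~\ref{thm: Gabai kneser normal form} uses.
\item The interstitial pieces are $I$-bundles, not annuli.
\item No extra closed orbit is needed once a singular orbit exists, and the statement is vacuous otherwise.
\end{enumerate}

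Finally, your argument shows the statement is a formal corollary of Theorem~\ref{thm: finite singular orbits} and adds nothing beyond it. It is an honest proof only if \cite{LiSingularOrbit} establishes finiteness of gut regions without appealing to this statement.
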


In either case, now we have canonical choices for which orbit(s) to perform Denjoy splitting along to obtain a pair of stable and unstable essential laminations $\Lam^s, \Lam^u$.

\section{Simultaneous Normalization}\label{section: sim norm}

In this section, we show that we can simultaneously isotope $\Lam^u, \Lam^s$ to be normal to a triangulation that depends only on the ambient manifold $M$.

\begin{theorem}\label{simnorm}
    Let $M$ be a closed orientable atoroidal 3-manifold. There exists a triangulation $\tau$ such that any pair $\Lam^s, \Lam^u$ of transverse essential laminations coming from stable and unstable laminations of pseudo-Anosov flows can be simultaneously isotoped to be normal to $\tau$ while respecting their transversality.
\end{theorem}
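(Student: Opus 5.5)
We will deduce the statement from Theorem \ref{thm: Gabai kneser normal form} by working with a single object that encodes both laminations at once. The natural candidate is the \emph{blown-up flow space}. Since $\Lam^s$ and $\Lam^u$ are transverse and meet along flow lines, a small regular neighborhood of $\Lam^s\cup\Lam^u$ is foliated near the intersection by arcs. The one-dimensional lamination $\Lam^s\cap\Lam^u$ is then a sublamination of orbits.

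The plan has three steps. First, take $\tau$ to be the triangulation of Theorem \ref{thm: Gabai kneser normal form}, refined if needed so that it also works for Theorem \ref{thm: finite singular orbits}. Second, isotope $\Lam^s$ alone to be normal to $\tau$. Third, carry $\Lam^u$ along, and then normalize $\Lam^u$ by an isotopy that preserves $\Lam^s$ setwise.

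For the first move, extend the isotopy of $\Lam^s$ to an ambient isotopy of $M$ and apply it to $\Lam^u$. Transversality is preserved automatically, since it is a property invariant under ambient isotopy. So the content of the theorem lies in the second move: normalizing $\Lam^u$ using only ambient isotopies that keep $\Lam^s$ normal and keep the pair transverse.

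For the second move, I would rerun the proof of Gabai's normalization, which proceeds by a sequence of complexity-reducing moves, for $\Lam^u$ relative to $\Lam^s$. The non-normal pieces of $\Lam^u\cap\tau$ are of three kinds: compressing disks in faces, tubes, and bent arcs in faces. Each is eliminated by an isotopy supported in a small ball $B$ meeting $\Lam^u$ in a disk $D$, sliding $D$ across a disk or half-ball $E$ whose boundary lies in $\partial\Delta$ and $\Lam^u$.

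The key observation is that $\Lam^s\cap B$ is a union of normal disks of the simplex, which are transverse to $D$ along arcs of orbits. Following the argument of Theorem \ref{thm: Gabai kneser normal form}, we may choose $E$ to be \emph{straightened relative to $\Lam^s$}: its interior meets $\Lam^s$ in arcs that are leafwise product arcs of the $I$-bundle structure. The isotopy across $E$ then drags the orbits $\Lam^s\cap\Lam^u\cap D$ inside leaves of $\Lam^s$. This is a leafwise isotopy of $\Lam^s$ that preserves $\Lam^s$ setwise, so it preserves both normality of $\Lam^s$ and transversality.

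Essentiality of both laminations is needed here to rule out spheres and disks of contact that would obstruct straightening. Atoroidality enters only through Theorem \ref{thm: Gabai kneser normal form} itself.

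The main obstacle is exactly this relative straightening. A disk $E$ across which $\Lam^u$ is pushed may meet $\Lam^s$ in a complicated pattern, for example spiraling leaves of $\Lam^s$ in the gut regions. The isotopy might then create tangencies between the two laminations, or force $\Lam^s$ off normal position.

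I would first try to handle this using the product structure of $\Lam^s\cap\Lam^u$ near the flow. Inside a simplex, each component of $\Lam^u\setminus\Lam^s$ lies in a complementary region of $\Lam^s$, which is an $I$-bundle away from the guts. On such a region, normalizing $\Lam^u$ amounts to normalizing a lamination transverse to the fibers, and we can isotope along fibers. The finitely many gut solid tori supplied by Theorem \ref{thm: finite singular orbits} are handled separately by a fixed local model.

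Failing that, a fallback is to normalize the singular foliation: collapse $\Lam^s\cup\Lam^u$ back to the transverse pair $\F^s,\F^u$, which are Reebless. We then run a joint normalization procedure in which each move is simultaneously an elementary move for both foliations, and Denjoy-split at the end.
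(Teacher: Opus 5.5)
Your outline matches the paper's at the top level: normalize $\Lam^s$ by Theorem \ref{thm: Gabai kneser normal form}, drag $\Lam^u$ along by the ambient isotopy, then normalize $\Lam^u$ by moves that fix $\Lam^s$ setwise. The gap is that the step carrying all the content is asserted, not proved: that the disk $E$ across which $\Lam^u$ is pushed ``may be chosen straightened relative to $\Lam^s$''. Look at what $E\cap\Lam^s$ can contain. Arcs with both endpoints on the $\partial\Delta$ side can be excluded using normality of $\Lam^s$ and essentiality of $\Sigma^s$ (no monogons), and circles by innermost-disk arguments. Arcs with both endpoints on $\Lam^u$, however, are excluded by nothing you invoke. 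Each such arc cuts off a bigon with one edge $e^s$ in a stable leaf and one edge $e^u$ in an unstable leaf, i.e.\ an unstable sheet crossing a stable sheet twice. While such a bigon is present, no push across $E$ that fixes $\Lam^s$ keeps the laminations transverse. For a genuine compressing disk, where $\partial E\subset\Lam^u$, every arc of $E\cap\Lam^s$ is of this type, so ``product arcs'' are not even available.

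The missing idea is the paper's stable--unstable bigon reduction. Lifting to $\widetilde M$, the two corners of such a bigon lie on a single orbit segment $\gamma\subset L^s\cap L^u$. Then $e^u\cup\gamma$ bounds a disk in $L^u$ and $e^s\cup\gamma$ bounds a disk in $L^s$, and together with the bigon these bound a ball. Pushing the unstable disk through this ball (innermost bigon first, or a whole parallel family at once) removes the bigon, fixes $\Lam^s$, and preserves transversality. Only after all such bigons are removed is $E$ ``straightened''. This move uses the flow essentially and drives the paper's normalization at vertices, edges and sectors of $\Sigma^s$.

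A second gap is control of supports. These bigon balls are not small and need not lie in one simplex, so each move can destroy normality achieved elsewhere; rerunning Gabai's complexity argument gives no control over this. The paper first isotopes $\Lam^u$ near $\partial_v N(\Sigma^s)$ so that flow lines on annular stable leaves cross the vertical boundary monotonically, which yields Corollary \ref{cor: no bigons in vertical boundary}. This does three things:
\begin{itemize}
\item it keeps every bigon ball inside $N(\Sigma^s)$;
\item it lets one normalize over a cellulation of $\Sigma^s$ in the order vertices, edges, sectors without undoing earlier stages;
\item it shows that $\Lam^u$ meets each complementary region of $N(\Sigma^s)$ in annuli and spiraling disks, which together with Theorem \ref{thm: finite singular orbits} handles the outside pieces.
\end{itemize}

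Your fallbacks do not fill this gap. In the interstitial regions of the Denjoy splitting, $\Lam^u$ consists of vertical bands (orbit $\times I$), not a lamination transverse to the fibers. In any case the troublesome bigons live where $\Lam^u$ crosses the Cantor family of stable sheets, not inside complementary regions of $\Lam^s$. Finally, Theorem \ref{thm: Gabai kneser normal form} does not apply to the singular foliations $\F^s,\F^u$, and you give no argument that moves can be chosen elementary for both at once.
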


This is a pairs version of Theorem \ref{thm: Gabai kneser normal form}. 

\noindent \textit{Proof.} The proof of Theorem \ref{simnorm} consists of four steps. Each will make use of the following bigon reduction procedure. We will describe bigon reduction in Step 0, but the proof begins at Step 1, with references to do Step 0 as needed. 

\subsection*{Step 0: Bigon reduction}\label{procedure: bigon reduction}
    We describe how to eliminate a bigon with edges $e^s, e^u$ on the stable and unstable laminations, respectively. Consider a bigon $B$ to have vertices $v, v'$ and edges $e^s, e^u$. Usually, bigons come in product families, but we pick the innermost bigon so that its interior disjoint from $\Lam^s, \Lam^u$. In practice, we can perform bigon reduction on a parallel family analogously by simultanenously eliminating all bigons.
    
    The vertices $v, v'$ are connected by a segment $\gamma$ of a flow line since they lie on the intersection between a leaf $L^s$ of the stable lamination $\Lam^s$ and a leaf $L^u$ of the unstable lamination $\Lam^u$. We can exhibit three disks $A, B, E$ as follows:
    \[e^u\cup\gamma = \partial A, A\subset L^u;\]
    \[e^s\cup\gamma = \partial E, E\subset L^s_t;\]
    \[e^s\cup e^u = \partial B\]

    \begin{figure}
        \centering
        \includegraphics[width=0.6\linewidth]{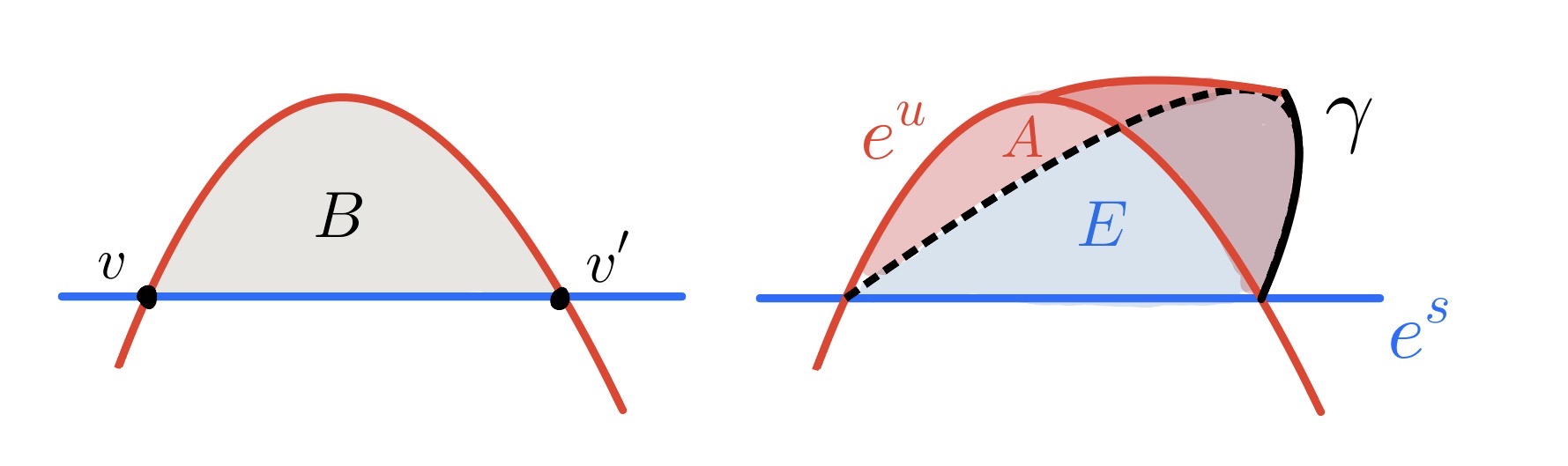}
        \caption{Left: A stable simple bigon $B$; Right: The flow line segment $\gamma$, two disks $A,E$ in an unstable and a stable leaf, and the isotopy ball bounded by $A,E,B$.}
        \label{fig: isotopy ball}
    \end{figure}

    Taken together, $A\cup B\cup E$ bounds a ball $P\subset M$. See Figure \ref{fig: isotopy ball} for an illustration of the isotopy ball. The desired isotopy is pushing $A$ along $E$ through $P$ such that $B$ is eliminated from the 2-simplex $\kappa$. From the perspective of $\kappa$, this is equivalent to sliding $e^u$ toward $e^s$ until they coincide as a flow segment, then pushing the flow segment off of $\kappa$ to the opposite side of $P$, eliminating $B$. Since the interior of the innermost bigon is free of $\Lam^s, \Lam^u$ and fixes $\Lam^s$ as a set, this isotopy maintains transversality $\Lam^s\pitchfork\Lam^u$ and preserves normality of $\Lam^s$.

\subsection*{Step 1: Normalizing $\Lam^u$ near the vertical boundary $\partial_vN(\Sigma^s)$} Using Theorem \ref{thm: Gabai kneser normal form} and Theorem \ref{thm: Gabai finite essential branched surface no tori carried}, ambiently isotope the stable lamination $\Lam^s$ to normal position carried by an essential branched surface $\Sigma^s$. In particular, since the manifold M is hyperbolic, $\Sigma^s$ carries no tori. At this moment, $\Lam^u$ is not in normal position, but it is still transverse to $\Lam^s$. Refine the triangulation by a subdivision so that in each 3-simplex, $\Lam^s$ has at most three normal disk types. Observe that this is a uniformly bounded subdivision. 

Normalizing $\Lam^s$ gives us a normal branched surface $\Sigma^s$ that carries $\Lam^s$. Consider the vertical boundary of the fibered neighborhood $\partial_vN(\Sigma^s)$. Next, we isotope $\Lam^u$ so that its intersection with the vertical boundary is a lamination by vertical arcs, as shown in Figure \ref{fig: no balloons}.

\begin{prop}
    There is an isotopy of $\Lam^u$ so that all intersections of flow lines on an annular stable leaf $A$ with the vertical boundary $\partial N_v(\Sigma^s)$ have the same sign.
\end{prop}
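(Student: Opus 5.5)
The strategy is to isotope $\Lam^u$ only inside the stable leaves, pushing the flow lines of the annular leaf $A$ off the vertical boundary wherever they cross it with the wrong sign. Step 0 (bigon reduction) is the model operation, since it preserves both $\Lam^s$ and the transversality.

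First I set up the local picture. The vertical boundary $\partial_vN(\Sigma^s)$ is a union of annuli and rectangles, foliated by interval fibers of $N(\Sigma^s)$. A stable leaf meets it only near its branch locus. So $A\cap\partial_vN(\Sigma^s)$ is a finite union of arcs, or circles, in $A$ lying along the cusp curves. Orient the flow lines in $A$ by the flow, and orient these cusp arcs consistently, for instance with the cusp direction pointing into $N(\Sigma^s)$. Put $\Lam^u$ in general position so that flow lines meet the cusp arcs transversally. Each crossing then gets a sign.

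Second, I eliminate cancelling pairs. Suppose a flow line $\ell\subset A$ crosses a cusp arc $c$ with opposite signs at consecutive points $p,p'$. The subarc of $\ell$ from $p$ to $p'$, together with the subarc of $c$, bounds a disk in $A$. This disk is a bigon between a flow segment and the branch locus. Choose it innermost among the product family of such bigons coming from nearby flow lines. Then apply the isotopy of Step 0 inside $A$, pushing the unstable leaves through a thin ball $P$ bounded by the disk together with a neighboring unstable disk. This slides the family of flow segments across $c$, removing both intersections. Because the move is supported in a small neighborhood of the bigon and keeps $\Lam^s$ fixed as a set, it preserves transversality and normality of $\Lam^s$. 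Each such move strictly decreases a complexity, namely the number of bigon families, which is finite by compactness of $\partial_vN(\Sigma^s)$ and transversality. The procedure therefore terminates.

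The remaining case, and the main obstacle, is a sign change that is not cut off by a bigon. This happens when the flow line spirals around the core of the annulus $A$, which is a closed orbit, and crosses a cusp circle essentially. In that case I use the dynamics. On an annular stable leaf, every flow line is asymptotic in forward time to the closed orbit $\gamma$, which is the core. A cusp circle in $A$ isotopic to the core therefore meets the flow lines near $\gamma$ with a sign governed by the contraction toward $\gamma$. I would show that after an isotopy of the cusp circle within $A$, which is allowed because it changes nothing normal, the circle is transverse to the flow in $A$. The flow direction on $A$ near $\gamma$ is a contracting Reeb-type foliation, and a curve isotopic to the core can be straightened to be everywhere transverse to the flow lines except where it coincides with $\gamma$. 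We avoid that coincidence by pushing the curve slightly off $\gamma$ along the contracting side. Once the circle is transverse, all signs agree. Essential cusp arcs with endpoints on the branch curves are handled by the same straightening after the bigons are removed. The delicate point is doing this straightening simultaneously for all flow lines of $\Lam^u$ meeting $A$, and compatibly with neighboring leaves. I would handle it by performing the isotopy leafwise on a product neighborhood, and I expect most of the technical work to sit there.
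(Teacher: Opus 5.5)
Your leafwise picture uses the same ingredients as the paper: cancel bigons between flow segments and the branch-locus trace in $A$, and get the trace off the closed orbit $\gamma$. As written, however, the proposal misplaces the obstacle and leaves the main step open.

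\textbf{The obstacle and the order of steps.} In the annulus $A$, two consecutive opposite-sign crossings of a flow line with an essential cusp circle $c$ always cut off a disk. The flow segment lies in one half-open annulus of $A\setminus c$, where it separates off a disk. So the case ``a sign change not cut off by a bigon'' does not arise. What actually blocks the argument is $c$ meeting $\gamma$. Any closed curve in $A$ has algebraic intersection zero with $\gamma$, so $c$ then meets the flow line $\gamma$ with both signs. An arc $c_+\subset c$ with endpoints on $\gamma$ cobounds a disk with a segment $\sigma\subset\gamma$. Flow lines spiraling into $\gamma$ cross $c_+$ infinitely often, giving a nested family of bigons whose limiting outer edge is $\sigma$. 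Cancelling innermost bigons, with a complexity that is ``finite by compactness of $\partial_vN(\Sigma^s)$'', never terminates here, because $A$ is noncompact. The only way out is to let the family include $\sigma$, that is, to push $c_+$ across $\gamma$. This is why the paper first isotopes the branch locus off $\gamma$. After that, the flow foliation on each component of $A\setminus\gamma$ is a product, and each compact trace has finitely many tangencies. Innermost-bigon removal then terminates, and what remains is monotone. Your straightening step already contains the push-off, so this part is repaired by doing it first.

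\textbf{The deferred simultaneity step.} More seriously, the step you defer is the actual content. Making a trace transverse to the flow inside the single leaf $A$ is easy. The real issue is realizing this by an isotopy of $\Lam^u$ that keeps $\Lam^s\pitchfork\Lam^u$ and is coherent for all flow lines and all nearby stable leaves. Your proposed leafwise isotopy on a product neighborhood is not available around an essential curve of $A$. The reason is that $\Lam^s$ has nontrivial holonomy around $\gamma$, so nearby leaves spiral rather than run parallel to $A$.

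The paper avoids this by moving the compact branch locus of $\Sigma^s$ instead of the lamination. Slide the branch locus within the stable leaves so that the new branched surface still carries $\Lam^s$; the laminations are untouched, so they remain transverse. Then apply the reverse of this single ambient isotopy to $\Lam^s$ and $\Lam^u$ together. Because it is one ambient isotopy, it acts on every flow line and every leaf at once and preserves transversality automatically. This is the idea missing from your proposal.
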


\begin{proof}
    Let $A^s$ be an annular stable leaf whose core $\gamma$ is a closed orbit. We will describe how to isotope the branch locus of the stable branched surface to be disjoint from $\gamma$. Note that to isotope the unstable lamination $\Lam^u$ as desired, we do the reverse isotopy. On each component of $A^s\setminus \gamma$, the flow foliation is a product $\R\times I$. Isotope the branch locus transverse to this product foliation and eliminate every innermost bigon bounded by a branch-locus arc and a flow segment. Each such isotopy is supported away from $\gamma$. After the bigon reductions, every remaining branch-locus arc is monotone with respect to the product foliation, and therefore all of its intersections with the flow lines have the same sign. See Figure \ref{fig: no balloons} for an illustration.
\end{proof}

Normalizing near the vertical boundary gives us the following useful corollaries.

\begin{corollary}\label{cor: no bigons in vertical boundary}
    In the vertical boundary $\partial_vN(\Sigma^s)$, there exists no bigons with one edge on a stable leaf and the other edge on an unstable leaf.
\end{corollary}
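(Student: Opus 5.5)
We argue by contradiction, using the monotonicity just established in the preceding Proposition. We work in a component $V$ of the vertical boundary $\partial_vN(\Sigma^s)$. Such a component is an annulus (or a rectangle in a compact piece) fibered by vertical $I$-fibers. After the isotopy of Step 1, $\Lam^u\cap V$ is a lamination by vertical arcs. Meanwhile, $\Lam^s\cap V$ consists of horizontal curves, since the leaves of $\Lam^s$ are transverse to the $I$-fibers of $N(\Sigma^s)$.

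Suppose a bigon $B\subset V$ existed, with edge $e^s$ on a stable leaf $L^s$ and edge $e^u$ on an unstable leaf $L^u$. The plan has three steps.

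\textbf{Step 1: the unstable edge is not a vertical fiber.} The edge $e^u$ is a subarc of a component of $L^u\cap V$, hence lies in a vertical arc. The stable edge $e^s$ is transverse to the vertical fibers. So $e^s$ is a graph over an interval, and it meets each vertical arc at most once. A bigon needs $e^s$ and $e^u$ to share both endpoints $v\neq v'$. This forces $e^u$ to contain two points of the same vertical arc that also lie on one horizontal graph $e^s$, which is impossible.

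\textbf{Step 2: the case where $\Lam^u\cap V$ is not yet purely vertical.} Here I would use the Proposition directly instead. The vertices $v,v'$ lie on $L^s\cap L^u$, so each is a point where a flow line crosses $V$. Parametrize $e^s$ from $v$ to $v'$ inside the leaf $L^s$, where the flow foliation is a product $\R\times I$ on each complementary piece of a closed orbit. Then $e^s$ is an arc of $L^s\cap\partial_vN(\Sigma^s)$, and at $v$ and $v'$ it crosses the flow lines $L^s\cap L^u$.

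\textbf{Step 3: a sign count.} The co-orientation of $B$ in $V$ forces the crossings of $e^s$ with the flow lines at $v$ and $v'$ to have opposite signs. This is the usual index count for a disk bounded by two arcs transverse to a line field: the transverse curve turns by $\pi$ relative to the vertical direction. Opposite signs contradict the Proposition, which says all intersections of flow lines on the (annular, or by the same argument any) stable leaf with $\partial_vN(\Sigma^s)$ have the same sign. Innermost bigon considerations let us assume $B$ has interior disjoint from $\Lam^s\cup\Lam^u$, so no extra crossings spoil the count.

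\textbf{Main obstacle.} The hard step is justifying the sign count at $v,v'$ in Step 3. I would do this by doubling $B$ across $e^u$ and applying Poincaré--Hopf to the induced line field. I would also need to extend the Proposition from annular leaves to all stable leaves, via the bigon reduction of Step 0 applied in the product regions $\R\times I$.
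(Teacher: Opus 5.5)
Your Step 1 is a complete proof and matches the paper. The paper states this corollary without proof, as an immediate consequence of the Step 1 normalization: after it, $\Lam^u\cap\partial_vN(\Sigma^s)$ is a lamination by vertical arcs, each running from one horizontal boundary curve of a component $V$ to the other, so no unstable arc can leave a stable curve and return to it.

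Steps 2--3 are unnecessary, and they are incomplete as written. Opposite crossing signs at $v,v'$ require $v$ and $v'$ to lie on a single flow line. You get this by lifting $B$ to $\widetilde M$, where a stable and an unstable leaf meet in at most one orbit, and then arguing in the plane $\widetilde L^s$; Poincar\'e--Hopf in $V$ does not give it. The Proposition you would invoke also only covers annular leaves.
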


\begin{corollary}\label{cor: outside fibered neighborhood branched surface is annuli and spiral disk}
    Let $V$ be a connected component of the closed complementary region of $N(\Sigma^s)$. Then $\Lam^u\cap V$ consists of annuli and non-compact disks that spiral to the annuli.
\end{corollary}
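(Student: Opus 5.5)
The plan is to understand $\Lam^u\cap V$ through its boundary traces. Here $V$ is a closed complementary region of $N(\Sigma^s)$. Its boundary consists of pieces of the horizontal boundary $\partial_hN(\Sigma^s)$, which are leaves or leaf-pieces of $\Lam^s$, together with vertical annuli from $\partial_vN(\Sigma^s)$.

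\textbf{Step 1: identify the complementary regions.} Since $\Lam^s$ is the Denjoy split stable lamination of a pseudo-Anosov flow, each complementary region of $\Lam^s$ is either
\begin{itemize}
\item an $I$-bundle region, or
\item a gut region: a solid torus around a split closed orbit (singular or Denjoy-split), whose boundary consists of annular or M\"obius stable leaves meeting along cusps.
\end{itemize}
After collapsing the $I$-bundle parts into $N(\Sigma^s)$, each $V$ is a solid torus. Its horizontal boundary lies in the annular stable leaves $A^s$ adjacent to the split orbit $\gamma$, and its vertical boundary consists of annuli parallel to the core.

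\textbf{Step 2: describe $\Lam^u\cap\partial V$.} On each horizontal piece, $\Lam^u$ meets $A^s$ in flow lines. On an annular stable leaf these are the closed orbit $\gamma$ (or its pushoff) together with lines spiralling to it in backward time, since the stable leaf is the weak stable leaf of $\gamma$. By the preceding proposition and Corollary \ref{cor: no bigons in vertical boundary}, $\Lam^u\cap\partial_v V$ is a lamination by vertical arcs, all crossing monotonically with the same sign. Gluing these, $\Lam^u\cap\partial V$ is a one-dimensional lamination on the torus $\partial V$. It consists of closed curves isotopic to the core, namely the traces of the unstable annuli through the split orbit, together with lines spiralling onto them.

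\textbf{Step 3: pass to $V$ itself.} Each component of $\Lam^u\cap V$ is a surface in a leaf of $\Lam^u$. Its boundary lies on $\partial V$, and it contains no closed curves inessential in $V$ except those bounding disks, because leaves of $\Lam^u$ are $\pi_1$-injective planes, annuli or M\"obius bands. A closed boundary curve isotopic to the core of $V$ must bound, within its unstable leaf, an annular piece running parallel to the core. This yields the unstable annuli: the unstable half-leaves of the orbit $\gamma$, restricted to $V$.

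A component containing a non-compact spiralling boundary line lies in a plane leaf, or in a non-compact part of an annular leaf. Since it contains no essential loop of $V$, it is simply connected, so it is a disk. Its boundary accumulates on the closed curves, so the disk accumulates on the annuli, which gives the spiralling.

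\textbf{Main obstacle.} The hard part is ruling out other topology in $V\cap\Lam^u$: pieces with several boundary components, or disks that do not accumulate on the annuli. I would handle both with the bigon reduction of Step 0. An innermost compressing configuration would produce a stable/unstable bigon on $\partial V$, which is excluded by Corollary \ref{cor: no bigons in vertical boundary} together with transversality on the horizontal boundary. The hyperbolicity input is that there are no tori in $\Sigma^s$, so $V$ is a genuine solid torus, not a $T^2\times I$. This should then force each piece to be an annulus or a disk limiting on one.
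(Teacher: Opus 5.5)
Your Steps 1--2 are what the paper relies on. The paper gives no separate argument for this corollary and treats it as an immediate consequence of the monotone-crossing proposition together with the flow-line picture on the annular stable leaves bounding the gut. Two small slips: on a stable leaf the non-closed flow lines are asymptotic to $\gamma$ in \emph{forward} time, not backward; and $V$ is a solid torus because it is the compact core of the complementary region around the split orbit, not because $\Sigma^s$ carries no tori.

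The gap is in Step 3, exactly at what you call the main obstacle, and bigon reduction cannot close it. Compressions need no bigons. By your Step 2, every closed leaf of $\Lam^u\cap\partial V$ is isotopic to the core, hence essential in $M$. So if a simple closed curve in a component $P$ of $\Lam^u\cap V$ bounds a disk $D$ in its leaf, then $D\cap\partial V$ could only consist of closed leaves of $\Lam^u\cap\partial V$ bounding subdisks of $D$, which is impossible. Thus $D\subset P$, and every component is automatically $\pi_1$-injective in its leaf. Moreover, the interior of $V$ is disjoint from $\Lam^s$, so there is nothing there for an unstable sheet to form a bigon with. The configurations that actually have to be excluded are not compressing configurations at all.

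What is missing is control of the ends of the pieces. You need that a piece with $\pi_1\cong\mathbb{Z}$ is a compact annulus closing up on a second closed curve of $\partial V$. A priori it could instead contain an end of its annular unstable leaf, or contain the core together with spiralling boundary lines. You also need that a simply connected piece is a half-strip accumulating on an annulus, rather than merely having spiralling boundary. Your sentences ``must bound, within its unstable leaf, an annular piece'' and ``it contains no essential loop of $V$'' assume precisely this. The boundary pattern on $\partial V$ together with incompressibility does not supply it.

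The input that does is the structure of $\Lam^u$ inside the complementary region of the split stable lamination. Every unstable sheet crossing the gap is a flow-line trace on a boundary leaf times a transverse arc, namely the strip inserted by the splitting. Hence each component of $\Lam^u\cap V$ is a component of $\Lam^u\cap\partial_h V$ times an arc. The monotone crossing from the proposition makes each such trace one of two things. Either it is the copy of the closed orbit, giving an annulus. Or it is a single ray entering once through $\partial_v V$ and spiralling forward onto that copy, giving a half-strip spiralling onto the annulus. That is the fact the paper is implicitly using.
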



\begin{figure}
    \centering
    \includegraphics[width=0.5\linewidth]{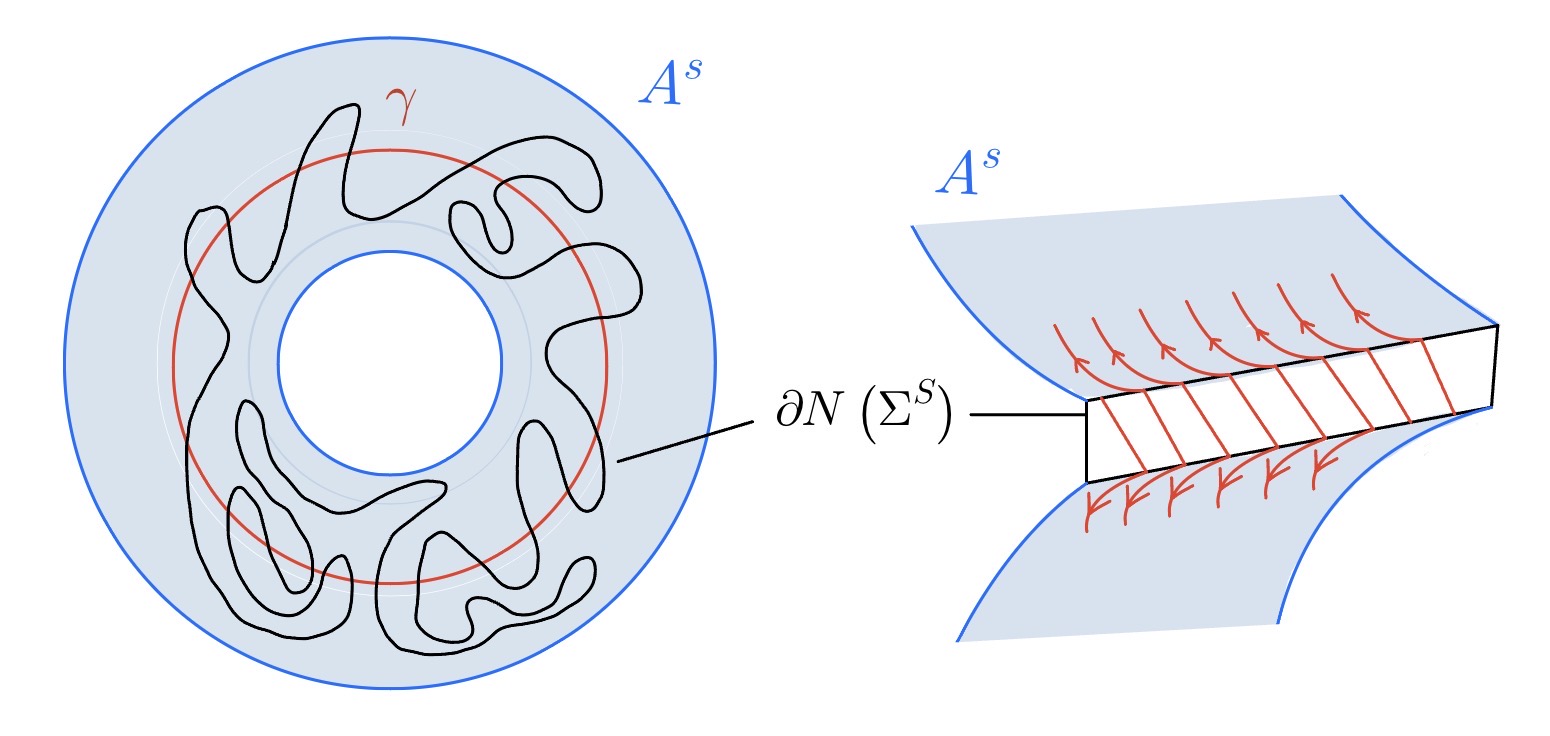}
    \caption{Left: an annular stable leaf bordering the splitting $A^s$ with its closed orbit $\gamma$ in red, and the intersection with the vertical boundary of $N_v(\Sigma^s)$ in black. Right: After the isotopy, all intersections between the flow lines and the vertical boundary have the same sign. In particular, there are no flow lines that enter and exit the vertical boundary on the same side.}
        \label{fig: no balloons}
\end{figure}

We proceed to normalize $\Lam^u$ with an eye towards maintaining transversality between the two laminations. 

\begin{definition}
    A local leaf of a lamination $\Lam$ with respect to a triangulation $\tau$ is a connected component of $\Lam\cap \sigma$, where $\sigma$ is a 3-simplex.
\end{definition}

\subsection*{Step 2: Normalizing near vertices of cellulated $\Sigma^s$}

Cellulate the stable branched surface $\Sigma^s$ so that every 0-complex is a branch point, every 1-complex is a branch arc, and every 2-complex is a sector. 

\begin{lemma}
    After an isotopy preserving the transverse intersection between $\Lam^s, \Lam^u$, there are no unstable monogons with ends in the fibered neighborhood of any vertex $v_i$ of the stable branched surface $\Sigma^s$.
\end{lemma}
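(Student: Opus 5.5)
We argue locally in a small ball neighborhood $N(v_i)$ of each vertex $v_i$ of the cellulated $\Sigma^s$, taken to be a union of fibered pieces of $N(\Sigma^s)$ over a small disk around $v_i$. We read an \emph{unstable monogon with ends in $N(v_i)$} as an arc $\alpha$ of $L^u\cap\Lam^s$-leaves (equivalently a flow segment pattern in $\Lam^u$) that leaves $N(v_i)$ and returns to the same local boundary face, together with a disk $D$ in the complement of $\Lam^s$ cobounded by $\alpha$ and an arc $\beta\subset\partial N(v_i)$. The strategy mirrors Step 1: reduce each monogon to a bigon with one edge stable and one edge unstable, then eliminate it by the bigon reduction of Step 0, which preserves normality of $\Lam^s$ and transversality $\Lam^s\pitchfork\Lam^u$.

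\textbf{Step (a).} We first exploit Step 1. After Step 1, $\Lam^u\cap\partial_vN(\Sigma^s)$ is a lamination by vertical arcs with consistent signs. By Corollary \ref{cor: no bigons in vertical boundary}, no monogon can have its return arc $\beta$ lying in $\partial_vN(\Sigma^s)$. Hence $\beta$ may be taken in $\partial_hN(\Sigma^s)\cap N(v_i)$, i.e.\ in a stable leaf, or in the interior fibered part. In the latter case, we push $\beta$ along the $I$-fibers of $N(\Sigma^s)$ until it lies in a local stable leaf; this is possible because the fibers are transverse to $\Lam^s$ and $N(v_i)$ is a product away from the branch arcs. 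Now $\alpha\cup\beta$ bounds a bigon whose stable edge lies in a local leaf $L^s\cap N(v_i)$ and whose unstable edge is $\alpha$.

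\textbf{Step (b).} Since $L^s$ and $L^u$ intersect along flow lines, the two vertices of the bigon lie on a common flow segment $\gamma\subset L^s\cap L^u$. We then produce the disks $A\subset L^u$ and $E\subset L^s$ exactly as in Step 0. For $A$ and $E$ to be disks, and for $A\cup B\cup E$ to bound a ball $P$, we use essentiality: leaves of $\Lam^s$ and $\Lam^u$ are $\pi_1$-injective planes, annuli, or M\"obius bands, and $M$ is irreducible. In particular, the loop $e^u\cup\gamma$ is null-homotopic in the ball $N(v_i)\cup D$, hence bounds a disk in $L^u$; the same argument applies to $E$. We take the monogon innermost, so that the interior of $B$ misses both laminations. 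Parallel families are handled simultaneously, as remarked in Step 0. Pushing $A$ across $P$ then removes the monogon.

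\textbf{Step (c).} Finally we show the process terminates. Each isotopy strictly decreases the number of components of $\Lam^u\cap\partial N(v_i)$ that are boundary-parallel. Since we work with finitely many vertices and finitely many product families of local leaves, we induct on this complexity. The isotopies are supported near $N(v_i)$ and away from the vertical boundary, so the Step 1 normalization is not disturbed.

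\textbf{Main obstacle.} The main obstacle is Step (b) when the flow segment $\gamma$ is not contained in $N(v_i)$. The flow segment may travel far, for instance spiralling toward an annular leaf as in Corollary \ref{cor: outside fibered neighborhood branched surface is annuli and spiral disk}, so that the ball $P$ is not local and could contain other pieces of $\Lam^s$ or $\Lam^u$. The first thing I would try is to choose the innermost bigon with respect to both laminations. I would then argue that any leaf entering $P$ would have to exit through $B$, contradicting innermostness, because leaves cannot end inside $A$ or $E$.
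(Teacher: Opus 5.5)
Your proposal has a genuine gap in Step (a), which is where the paper's proof does its real work. In the paper, a small isotopy first makes the stable and unstable sheets disjoint in a saturated neighborhood $U_i$ of $v_i$, so $\Lam^u\cap U_i$ is a finite set of clumps transverse to the stable $I$-fibers. A returning clump then gives a monogon $M$ with $\partial M=\alpha\cup\beta$, where $\alpha\subset\Lam^u$ and $\beta\subset\pi_s^{-1}(v_i)$ is a piece of a stable $I$-fiber. The disk $M$ is allowed to meet $\Lam^s$, and the proof classifies the arcs of $M\cap\Lam^s$:
\begin{itemize}
\item an arc with both ends on $\beta$ would give a monogon for $\Sigma^s$, which is impossible because $\Sigma^s$ is essential;
\item an arc with both ends on $\alpha$ cuts off a stable--unstable bigon, which Step 0 removes;
\item once only arcs from $\alpha$ to $\beta$ remain, $M$ is a product with respect to $\Lam^s$, and $\alpha$ is pushed across $M$ while staying transverse to $\Sigma^s$.
\end{itemize}
Your plan never uses the essentiality of $\Sigma^s$ in this way. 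Instead you require the disk $D$ to miss $\Lam^s$, which discards the generic case: every stable sheet crossing $\beta$ runs through the disk and exits through $\alpha$, and choosing the monogon innermost does not remove these arcs. You then propose to push $\beta$ along the $I$-fibers into a stable leaf. That move is not available. An arc transverse to $\Lam^s$ cannot be slid along the fibers into a leaf while keeping its endpoints on $\alpha$. Moreover, the endpoints of $\beta$ lie on $\Lam^u$ but in general not on $\Lam^s$, so they need not be joined by a flow segment, which is what Step 0 requires; there is no bigon to reduce. Corollary \ref{cor: no bigons in vertical boundary} does not rescue this step: it forbids stable--unstable bigons inside $\partial_vN(\Sigma^s)$ and says nothing about monogons or about where $\beta$ may sit.

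Step (c) also fails as stated. You assert that the isotopies are supported near $N(v_i)$ and away from $\partial_vN(\Sigma^s)$, but Step 0 reductions are not local, as your own ``main obstacle'' notes. You therefore need an argument that working at $v_i$ does not recreate returning clumps at other vertices. The paper normalizes the vertices one at a time and checks exactly this. By Corollary \ref{cor: no bigons in vertical boundary}, every bigon reduction takes place inside $N(\Sigma^s)$. On each already normalized $U_j$ it acts through disks transverse to the stable $I$-fibers, so the returning-clump pattern there is unchanged. Termination is measured by the finitely many unstable clumps in each $U_i$, which each monogon removal strictly decreases. Counting components of $\Lam^u\cap\partial N(v_i)$ does not work, since there may be uncountably many. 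Finally, your stated main obstacle concerns the validity of the Step 0 bigon reduction itself, which the paper takes as already established. The ingredient actually missing from your plan is the analysis of $M\cap\Lam^s$ using the fact that $\Sigma^s$, being essential, has no monogons.
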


\begin{proof}
    Let $\pi_s: N(\Sigma^s)\rightarrow \Sigma^s$ be the fiber projection map. Choose pairwise disjoint small saturated neighborhoods $U_i\subset N(\Sigma^s)$ for vertices $v_i$ of $\Sigma^s$. Near a vertex $v_i$, by small isotopies we can assume that the intersection 1-complex $\Sigma^s\cap \Sigma^s$ is disjoint from a neighborhood of $v_i$. Therefore, the fibered neighborhood $U_i$ of a small neighborhood of the vertex $v_i$ consists of disjoint stable and unstable sheets. Thus, in each $U_i$, the unstable lamination consists of finitely many clumps of sheets meeting the stable $I$-fibers transversely.

    We say that $\Lam^u$ has a \textit{returning clump} at vertex $v_i$ if there is a monogon $M$ created by an unstable clump passing through $U_i$ twice, so that $\partial M = \alpha\cup \beta$, where $\alpha\subset \Lam^u$, and $\beta$ is a subset of the $I$-fiber $\pi_s^{-1}(v_i)$. We say that the unstable carrier is \textit{strongly normalized} near $v_i$ if no such returning clump occurs. 

    Now normalize the vertices one at a time. Suppose $U_1,\ldots,U_{i-1}$ have already been strongly normalized. Consider $U_i$. Because the unstable carrier is finite, there are finitely many returning clumps in $U_i$. Choose one outermost returning clump that determines a monogon $M$ with boundary $\alpha\cup\beta$ as before. Then consider the intersection $M\cap \Lam^s$, which is a collection of arcs. 
    
    If any of the connected components have both vertices on $\beta$, we find a monogon in an essential branched surface $\Sigma^s$, which is impossible. If a component $c$ has both endpoints on $\alpha$, we see a bigon with edges in the stable and unstable laminations. Eliminate such bigons using the isotopy described in Section \ref{procedure: bigon reduction}. Now, the monogon $M$ can be eliminated by isotoping $\alpha$ across $M$ while maintaining transversality with $\Sigma^s$. This isotopy strictly decreases the number of unstable clumps in the vertex neighborhood $U_i$.

    Next, we argue that monogon elimination near a vertex $v_i$ preserves the strongly normalized vertices with lower index. To see this, consider that Corollary \ref{cor: no bigons in vertical boundary} implies any bigon reduction necessary in our monogon reduction step is performed in the fibered neighborhood $N(\Sigma^s)$. On every saturated vertex neighborhood $U_i$, the portion of the new unstable disk in is obtained from the old unstable disk by an isotopy through disks transverse to the stable $I$-fibers. Then the bigon reduction preserves the returning-clump pattern in $U_i$. In particular, if the unstable carrier was strongly normalized in $U_i$ before the reduction, it remains strongly normalized there afterward.

    Therefore, strongly normalizing each $U_i$ via monogon reduction terminates because every $U_i$ only has finitely many families of returning clumps, and there are finitely many vertices $v_i$. The resulting isotopy ensures that $\Lambda^u$ has no monogons near vertices $v_i$ of the cellulated $\Sigma^s$.
\end{proof}

\subsection*{Step 3: Normalizing near edges.}

Next, we normalize an edge whose endpoints have undergone the previous strong normalization.
\begin{lemma}
    After an isotopy preserving transversality of $\Lam^s, \Lam^u$, there are no returning monogons, boundary compressions, or generalized boundary compressions in the fibered neighborhood of any edges $e$ of $\Sigma^s$.
\end{lemma}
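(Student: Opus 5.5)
Working one edge at a time, I would copy the vertex argument of Step 2, using the fibered structure over the edge. Fix an edge $e$ of the cellulated $\Sigma^s$ whose endpoints $v_j, v_k$ are already strongly normalized. Let $U_e = \pi_s^{-1}(e')$, where $e'$ is $e$ minus the open vertex neighborhoods; this is a saturated neighborhood and is a product $e'\times I$ over a branch arc. As in Step 2, a small isotopy makes $\Lam^u\cap U_e$ consist of finitely many clumps of sheets. Each clump is transverse to the stable $I$-fibers and meets the vertical boundary in vertical arcs, by the Proposition of Step 1 and Corollary \ref{cor: no bigons in vertical boundary}.

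I would then define the three bad configurations concretely in terms of a disk $D$ whose boundary is an arc $\alpha\subset\Lam^u$ together with a second arc $\beta$.
\begin{enumerate}
    \item \emph{Returning monogon}: $\beta$ lies in a single $I$-fiber over $e'$.
    \item \emph{Boundary compression}: $\beta$ lies in $\partial U_e$ over a vertex end, i.e.\ in $\pi_s^{-1}(\partial e')$.
    \item \emph{Generalized boundary compression}: $\beta$ runs along a union of $I$-fibers and vertical boundary arcs.
\end{enumerate}
Choose an outermost such disk $D$ and examine $D\cap\Lam^s$, which is a family of arcs and possibly circles. Circles bound disks in stable leaves, since $\Lam^s$ is essential, and can be removed. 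An arc with both ends on $\beta$ gives a monogon or a disk of contact for the essential branched surface $\Sigma^s$, which is impossible. An arc with both ends on $\alpha$ cuts off a stable/unstable bigon, which I remove by the bigon reduction of Step 0, working from innermost outward. Once $D$ meets $\Lam^s$ only in arcs running from $\alpha$ to $\beta$, it is a product region. I then isotope $\alpha$ across $D$, through disks transverse to the $I$-fibers. This preserves $\Lam^s\pitchfork\Lam^u$ and strictly reduces the number of clumps meeting $U_e$, or the number of their intersections with $\pi_s^{-1}(\partial e')$. That gives a complexity that decreases, so the process terminates.

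The step I expect to be the main obstacle is showing that these moves do not destroy the strong normalization already obtained near the endpoint vertices or near previously treated edges. My plan follows the Step 2 argument. By Corollary \ref{cor: no bigons in vertical boundary}, every bigon reduction takes place inside $N(\Sigma^s)$. The compression isotopies are supported in a small neighborhood of $D\cup U_e$, and inside each vertex neighborhood $U_i$ they act by isotopies through unstable disks transverse to the stable fibers. Such isotopies cannot create a returning clump at $v_i$: a new returning clump would give a monogon whose $\beta$ lies in $\pi_s^{-1}(v_i)$, and pulling it back through the isotopy would produce a returning clump there beforehand. The generalized boundary compressions are the delicate case, because $D$ may pass over a vertex neighborhood. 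For these I would first push $D$ off $U_i$ using the vertex normalization, and only then compress. Finally, I process the finitely many edges in order and take an induction on the pair (number of edges done, clump complexity of the current edge). This shows the procedure terminates with no returning monogons, boundary compressions, or generalized boundary compressions over any edge.
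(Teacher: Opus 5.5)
Your proposal follows essentially the same route as the paper. Over the edge neighborhood you eliminate returning monogons by sorting $D\cap\Lam^s$: arcs with both ends on the fiber are excluded by essentiality of $\Sigma^s$, and arcs with both ends on $\alpha$ are removed by the Step 0 bigon reduction. You then preserve the earlier vertex normalizations because Corollary \ref{cor: no bigons in vertical boundary} forces every bigon reduction to take place inside $N(\Sigma^s)$. Your explicit treatment of boundary and generalized boundary compressions, and your decreasing-complexity termination argument, elaborate points the paper handles only by appealing to the finite complexity of the dynamic branched surfaces; they are not a different strategy.
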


\begin{proof}
    Consider the fibered neighborhood $E_e = \pi^{-1}_s(e)$ of an edge $e\subset\Sigma^s$.

    Since there is no branch vertex in the interior of $e$, the local model of $\Sigma^s$ is constant along $e$. Cutting $E_e$ along the stable sector faces incident to $e$ produces finitely many rectangular product regions. On each rectangle, the stable and unstable laminations determine finite families of properly embedded arcs, grouped into parallel clumps.

    We say that $\Lambda^u$ is \textit{strongly normalized} near $e$ if these arc systems contain neither returning unstable clumps nor reducible stable–unstable bigons. We show that we can isotope $\Lam^u$ to be strongly normalized near every $e$ while preserving the strongly normalized vertex neighborhoods.

    Consider unstable returning clumps that create monogons with boundary $\alpha\cup \beta$ where $\beta$ lies in the stable $I$-bundle fiber in $E_e$ and $\alpha\subset\Lam^u$. Each monogon, as before, intersects the stable lamination $\Lam^s$ in arcs. Such arcs cannot have both endpoints in the fiber since $\Sigma^s$ is essential, and any arcs with endpoints on $\alpha$ lie in \textit{stable-unstable bigons}, which are bigons with one edge on $\Lam^s$ and one edge on $\Lam^u$, that can be eliminated via isotopy as in Section \ref{procedure: bigon reduction}. We remark that such bigon-reduction isotopies are not supported only in a neighborhood of $E_e$. However, as before, bigon-reduction occurs only in the fibered neighborhood $N(\Sigma^s)$ and respects previously strongly-normalized vertices and edges.

    Perform monogon reduction for all returning clumps of the unstable lamination. After finitely many isotopies, there are no returning unstable clumps in $E_e$. One way to see this finiteness is that each family of monogons and bigons corresponds to a local sector of the finite complexity unstable dynamic branched surface. Now, every unstable trace on the rectangular faces of the edge neighborhood joins distinct boundary intervals. We next eliminate stable–unstable bigons with one edge on a component of $\pi^{-1}_s(e)$ and one edge on $\Lam^u\cap E_e$. As before, there are only finitely many such families due to the underlying dynamic branched surfaces being finite, and each family of bigons may be eliminated with an isotopy supported in $N(\Sigma^s)$ following Section \ref{procedure: bigon reduction}.
\end{proof}

At this point of the normalization process, the stable branched surface near $e$ contains only finitely many rectangular faces and finitely many distinguished boundary intervals. Since there are no monogons, every unstable trace joins two allowed boundary intervals without returning to the same side. Parallel arcs with the same endpoint data are grouped into one clump.

Hence there are only finitely many nonparallel unstable arc types. Since each stable and unstable clump meets at most once, the total number of crossings is uniformly bounded. Therefore only finitely many planar combinatorial diagrams can occur on the rectangular faces.

The gluing of these rectangles around $e$ is determined by the local behavior of $\Sigma^s$. Hence only finitely many strongly normalized unstable local sheet types occur in $E_e$.

Repeating this over the finitely many branch edges gives strong normalization near the entire $1$-skeleton.

\subsection*{Step 4: Normalizing near a sector}

Let $D$ be a sector of $\Sigma^s$. After subdividing if necessary, assume that $D$ is a disk. By the previous two steps, $\Lam^u$ has been strongly normalized near $\partial D$, with only finitely many possible local boundary types for each edge $e$. Away from a collar $C(\partial D)\subset D$ that has been strongly normalized, the disk $D_0 =\overline{D\setminus C(\partial D)}$ is disjoint from any branching, and it has fibered neighborhood $\pi^{-1}_s(D_0)$ that is topologically $D_0\times I\cong B^3$. 

Normalize $\Lam^u\cap \pi^{-1}_s(D_0)$ using the same technique as in Gabai's normalization process, performing compressions and boundary compressions and eliminating bigons as necessary. As before, for any compression of boundary compression disk $\Delta$, consider the intersection with the stable sector $\Sigma^s$ near $D_0$, which is a family of arc and circles because $\Sigma^s$ is unbranched near $D_0$. Circles can be removed by innermost disk arguments, and arcs with endpoints on an unstable leaf can be eliminated by a bigon elimination isotopy. Upon termination, such isotopies put $\Lam^u$ into normal position with respect to $\Sigma^s$ while preserving the normalized 1-skeleton of $\Sigma^s$.

Apply Kneser finiteness in the fixed product ball $\pi^{-1}_s(D_0)$. The boundary pattern on belongs to a finite collection, so only finitely many nonparallel normalized surface types occur in $\pi^{-1}_s(D_0)$. Amalgamating parallel copies into clumps therefore gives only finitely many possible unstable local sheet types over $D$. Thus we need only finitely many unstable branched surfaces restricted to $N(\Sigma^s)$.

To show that the stable and unstable laminations can be simultaneously normalized, it remains to finiteness of branched surfaces carrying every complementary region $V\subset\overline{M-N(\Sigma^s)}$. We use Theorem \ref{thm: finite singular orbits} of the second author \cite{LiSingularOrbit} to establish that there are only uniformly finitely many possibilities for the complementary regions in the pseudo-Anosov case. 

By Corollary \ref{cor: outside fibered neighborhood branched surface is annuli and spiral disk}, $\Lambda^u\cap V$ consists of annular leaves and noncompact disk leaves spiraling to annular ones. Each of the finitely many boundary carrier patterns on $\partial V$ extends to only finitely many carriers in $V$. Hence the inside and outside carriers glue along their common boundary to give one of finitely many global branched surfaces $\Sigma^u_1,\ldots,\Sigma^u_N$ carrying $\Lam^u$.

Finally put the outside pieces into relative normal form as well. The resulting global carriers are Kneser branched surfaces. If a subdivision was used anywhere in the construction, take one common finite refinement for the finite collection $\Sigma^s,\Sigma^u_1,\ldots,\Sigma^u_N$.

Thus both $\Lambda^s$ and $\Lambda^u$ are carried by normal branched surfaces for the same triangulation or uniformly finite refinement. Therefore the stable and unstable laminations are simultaneously normalized. This concludes the proof of Theorem \ref{simnorm}.\qed

\subsection*{Finiteness of Kneser branched surfaces}

We record the following consequences of Theorem \ref{simnorm}.

\begin{theorem}\label{thm: finite pairs of Kneser branched surfaces}
    Let $M$ be a closed 3-manifold. There are finitely many pairs of essential branched surfaces $B = \{(\Sigma^s_i, \Sigma^u_i)\}$ so that for any Anosov or pseudo-Anosov flow $\phi$ on $M$, there is a pair $(\Sigma^s_i, \Sigma^u_i)\in B$ carrying the stable and unstable laminations $\Lam^s, \Lam^u$, respectively, up to ambient isotopy. 
\end{theorem}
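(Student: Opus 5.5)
The plan is to read Theorem \ref{thm: finite pairs of Kneser branched surfaces} off the proof of Theorem \ref{simnorm}, tracking at each stage that only finitely many choices were made. (The statement says ``closed 3-manifold'', but the tools below need $M$ closed, orientable and atoroidal, and we use hyperbolicity as in the rest of the paper.)

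\textbf{Setup.} Fix the triangulation $\tau$ supplied by Theorem \ref{thm: Gabai kneser normal form}, and fix the uniformly bounded subdivision used in Step 1. Given an Anosov or pseudo-Anosov flow $\phi$:
\begin{enumerate}
\item In the pseudo-Anosov case, Denjoy split along the singular orbits. In the Anosov case, split along a closed orbit, chosen canonically in the sense of Section \ref{section: anosov to pA}.
\item This gives transverse nowhere dense essential laminations $\Lam^s$ and $\Lam^u$.
\item By Theorem \ref{simnorm}, isotope the pair simultaneously into normal position with respect to $\tau$, or a fixed finite refinement of it, keeping them transverse.
\end{enumerate}

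\textbf{Finiteness of $\Sigma^s$.} Since $\Lam^s$ is normal, its carrier is a union of normal disk types in the simplices of $\tau$. There are at most finitely many such types per simplex, and only finitely many ways to amalgamate parallel disks into clumps. So $\Sigma^s$ lies in a finite list of normal branched surfaces. Theorem \ref{thm: Gabai finite essential branched surface no tori carried} says the essential ones among them are finitely many, up to isotopy.

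\textbf{Finiteness of $\Sigma^u$, given $\Sigma^s$.} Fix $\Sigma^s$ from this list. Steps 2--4 of the proof of Theorem \ref{simnorm} show the following.
\begin{enumerate}
\item Inside $N(\Sigma^s)$, only finitely many strongly normalized local sheet types of $\Lam^u$ occur: near the vertices, near the edges (finitely many rectangle diagrams), and over each sector (Kneser finiteness in the product ball $\pi_s^{-1}(D_0)$).
\item In each complementary region $V$ of $N(\Sigma^s)$, the possibilities are finite as well. Theorem \ref{thm: finite singular orbits} bounds the gut regions, and Corollary \ref{cor: outside fibered neighborhood branched surface is annuli and spiral disk} shows $\Lam^u\cap V$ is annuli plus spiralling disks, whose carriers are determined up to finitely many choices by the boundary pattern on $\partial V$.
\end{enumerate}
Gluing these pieces gives one of finitely many $\Sigma^u_1,\dots,\Sigma^u_N$. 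Taking the union over the finite list of $\Sigma^s$ produces the finite set $B$ of pairs.

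\textbf{Main obstacle.} The delicate point is uniformity: the bounds in Steps 2--4 must depend only on $\Sigma^s$ and $\tau$, not on $\phi$. My first approach is to note that every count is a count of combinatorial patterns on a fixed finite cell structure, namely $\tau$ refined by the cellulation of $\Sigma^s$. Clumping parallel sheets removes any dependence on the number of leaves, so a bound on the number of nonparallel clump types follows from normality alone.

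A second point is that each $\Sigma^u_j$ must be essential, not merely carry $\Lam^u$. For this I would split $\Sigma^u_j$ along the lamination as in Gabai's argument, and then invoke Theorem \ref{thm: Gabai finite essential branched surface no tori carried} once more to keep the list finite.
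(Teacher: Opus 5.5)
Your proposal is correct and is essentially the paper's argument. You simultaneously normalize via Theorem \ref{simnorm}, note that normal carriers for a fixed triangulation form a finite list, and use Gabai's finiteness (Theorem \ref{thm: Gabai finite essential branched surface no tori carried}) to pass to essential carriers. You are also right that the hypothesis really needs to be atoroidal, as Theorem \ref{simnorm} requires.

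The paper applies this count to $\Lam^u$ exactly as you apply it to $\Lam^s$, since $\Lam^u$ is normal to the same fixed $\tau$. So your detour through Steps 2--4, the complementary regions and Theorem \ref{thm: finite singular orbits} (which the paper invokes only for pseudo-Anosov guts) is unnecessary. The uniformity issue you flag as the main obstacle never arises, because the bound for $\Sigma^u$ need not depend on $\Sigma^s$ at all.
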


\begin{proof}
    First perform an isotopy to simultaneously normalize $\Lam^u, \Lam^s$ to the triangulation $\tau$. Following Gabai's paper, there are finitely many essential branched surfaces carrying any normalized lamination, so there are also finitely many pairs that carry $\Lam^u, \Lam^s$.
\end{proof}

\begin{definition}
    Let $\phi$ be a pseudo-Anosov flow on $M$. We call the pair of essential branched surfaces $\Sigma^s, \Sigma^u$ carrying the stable and unstable laminations in normal position their \textit{Kneser branched surfaces}.
\end{definition}

\section{Parallel Locus}\label{section: parallel locus}

From Theorem \ref{thm: finite pairs of Kneser branched surfaces}, we conclude that there are finitely pairs $\Sigma^s, \Sigma^u$ of possible Kneser branched surfaces. From now on, we consider that when there is an infinite sequence of Anosov or pseudo-Anosov flows on $M$, all of their stable laminations (resp. unstable laminations) are carried by the same Kneser branched surface. 

In general, the stable and unstable laminations must be transverse to each other. However, from the point of view of the triangulation, the Kneser branched surfaces may occupy parallel sheets of the same normal disk type, making them seem tangent over several 3-simplices. In this section, we made some definitions to describe how tangent the stable and unstable laminations might seem to each other. 

\begin{definition}[Parallel Locus]
    Let $\Lams, \Lamu$ be stable and unstable laminations of a pA flow $\phi$ on $M$ carried by Kneser branched surfaces $\Sigma^u, \Sigma^s$ with fiber neighborhoods $N(\Sigma^u), N(\Sigma^s)$. In the universal cover, we have $\cLams, \cLamu$ carried by branched surfaces $\cSigu, \cSigs$ with fiber neighborhoods $\widetilde{N}(\cSigu), \widetilde{N}(\cSigs)$. Let the fiber at a point $x$ of a stable branched surface $\Sigma^s$ be $I_x(\Sigma^s)$, or we write $I_x^s$ when there is no ambiguity on the branched surface. Analogously the fiber of the unstable one $I_x(\Sigma^u)$, or $I_x^u$ when it's not ambiguous. For a subset $X\subset \widetilde{L}^s$, its \textit{stable saturation} with respect to $\cSigs$, denoted $\sat(X,\cSigs)$, is the preimage $\widetilde{\pi}^{-1}_s(X)\subset \widetilde{N}(\cSigs)$, where $\widetilde{\pi}: \widetilde{N}(\cSigs)\rightarrow \cSigs$ is the projection map. When there is no ambiguity about the branched surface, we abbreviate to $\sat_s(X)$. The \textit{unstable saturation} $\sat(X, \cSigu)$, or $\sat_u(X)$, is defined analogously.

Let a point in a stable leaf $x\in \widetilde{L}^s$ be in the \textit{$R$-parallel locus} $P_R(\widetilde{L}^s, \widetilde{L}^u; \cSigu)$ with respect to an unstable leaf $\widetilde{L}^u$ as well as the unstable branched surfaces $\cSigu$ if there is a radius-$R$ stable disk $D_R^s\subset \widetilde{L}^s$ containing $x$ and an unstable $R$-disk $D^u\subset\widetilde{L}_1^u$ for some $\widetilde{L}_1^u$ carried by $\widetilde{\Sigma^u}$ such that $D_R^s$ is contained in the unstable saturation of $D^u$ as a section of the unstable $I$-bundle. More precisely,
\begin{itemize}
        \item The unstable saturation of $D^u$ contains the stable disk: $D^s_R\subset \sat_u(D^u)$.
        \item The unstable fiber projection $\pi_u:\sat(D^u)\rightarrow D^u$ restricts to a homeomorphism $\pi_u|_{D_R^s}:D_R^s\rightarrow D^u$.
    \end{itemize}  

If $D_R^s, D^u$ satisfy the conditions above, we call them \textit{dual $R$-disks}. In particular, we call any pair of topological leaf-wise disks in a stable and unstable leaf satisfying the above conditions $\textit{dual}$ to each other.

    \end{definition}

 Heuristically, the $R$-parallel locus $P_R(\widetilde{L}^s, \widetilde{L}^u; \cSigu)$ measures how much of a stable leaf is close to a neighborhood of an unstable leaf to radius $R$. Its size is determined by the branched surface $\cSigu$ and their fibered neighborhoods. The definition of a parallel locus is equivariant under deck transformations. 

 
\begin{definition}[Shadow]
    The $R$-\textit{shadow} of a branched surface $ \cSigu$ on a stable leaf $\widetilde{L}^s$ is the union of parallel loci on $\widetilde{L}^s$ of all scales less than $R$, taken union over all leaves $\widetilde{L}^u\subset \cLamu$. The shadow is a subset of $\widetilde{L}^s$. In other words, 
    \[P_R(\widetilde{L}^s, \cSigu) = \bigcup_{\widetilde{L}^u\subset \cSigs} \bigcup_{0<r<R}P_r(\widetilde{L}^s, \widetilde{L}^u; \cSigu).\]
\end{definition}

Heuristically, the shadow measures how much of a stable leaf is close to any unstable leaf, regardless of the scale. In particular, it tells us that if a point $p$ in a stable leaf is not inside the shadow of some unstable branched surface, then the stable fiber $I_p$ is either disjoint from that branched surface or that a neighborhood of $p$ coincides with a set of unstable fibers of the unstable branched surface.

 Since parallel loci depend on choosing stable and unstable branched surfaces $\cSigs, \cSigu$ carrying $\cLams, \cLamu$, we must discuss how splitting a branched surface would influence parallel loci and shadows. To this end, we first define splittings of branched surfaces to some distance, following the conventions of Gabai-Oertel \cite{GabaiOertel}.

\begin{definition}Let $\Sigma, \Sigma'$ be branched surface embedded in $M$. We say that $\Sigma'$ is a \textit{splitting} of $\Sigma$ if the following condition holds. There is an $I$-bundle $J\subset M$ such that $N(\Sigma)\cup J$ where $J\cap N(\Sigma')\subset \partial ; \partial_hJ\subset \partial_hN(\Sigma')$; and $\partial_v J\cap N(\Sigma')\subset \partial_vN(\Sigma')$ has finitely many components whose fibers are fibers of $\partial_vN(\Sigma')$. In practice, the splittings we use respect a lamination fully carried by $\Sigma$. 

We say that the splitting is to distance $R$ if the distance between the old and new vertical boundaries measured in $N(\Sigma)\setminus N(\Sigma')$ is between $R$ and $2R$.
    
\end{definition}

\begin{prop}
    There are only finitely many distance-$R$ splittings of a branched surface $\Sigma$ in a closed 3-manifold $M$.\qed
\end{prop}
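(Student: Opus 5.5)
The plan is to reduce the statement to a finite combinatorial count. Since $M$ is closed, fix the branched surface $\Sigma$ with its fibered neighborhood $N(\Sigma)$ and projection $\pi:N(\Sigma)\to\Sigma$. We may assume $\Sigma$ is normal to the triangulation $\tau$ of Theorem \ref{thm: Gabai kneser normal form}; in any case we fix a finite cellulation of $\Sigma$ into sectors. A splitting $\Sigma'$ of $\Sigma$ is determined, up to isotopy in $N(\Sigma)$, by two pieces of data:
\begin{itemize}
\item the \emph{splitting locus} $\pi(N(\Sigma)\setminus N(\Sigma'))\subset\Sigma$, which is a compact subsurface;
\item for each point of this locus, how the $I$-fiber over it is cut, that is, which sheets separate.
\end{itemize}
So it suffices to show that, up to isotopy, only finitely many such data can occur when the splitting is to distance $R$.

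\textbf{Step 1 (bounded region).} The distance condition says that every new vertical boundary component lies within distance $2R$ (measured in $N(\Sigma)\setminus N(\Sigma')$) of an old vertical boundary component. Hence the splitting locus is contained in the closed $2R$-neighborhood $\mathcal{N}_{2R}$ of the branch locus in $\Sigma$. Because $M$ is compact and $\Sigma$ has a fixed finite structure, $\mathcal{N}_{2R}$ is compact and meets only finitely many cells.

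\textbf{Step 2 (quantize the region).} Subdivide $\mathcal{N}_{2R}$ into finitely many small cells, of diameter much smaller than $R$. Over each cell the fiber structure of $N(\Sigma)$ is a product with a bounded number of local sheets, namely the number of normal disk types per simplex. By a small isotopy supported in $N(\Sigma)$, push the new vertical boundary $\partial_v N(\Sigma')$ into general position with respect to this subdivision, and then further isotope it so that it becomes a union of cells of a fixed finer subdivision. The combinatorial type of the splitting is then recorded by two things: which cells lie in the splitting locus, and, for each cell, which of the boundedly many gaps between local sheets is opened. This leaves only finitely many choices.

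\textbf{Step 3 (main obstacle).} The hard step is justifying that the isotopy in Step 2 preserves the splitting up to equivalence. There are two concerns:
\begin{itemize}
\item the straightened vertical boundary must still meet the Gabai--Oertel conditions, namely $\partial_v J\cap N(\Sigma')\subset\partial_vN(\Sigma')$ with finitely many components;
\item the distance of the straightened splitting must stay within the allowed range, or within a constant multiple of it.
\end{itemize}
For the first concern, I would argue that a vertical boundary component of bounded length can be isotoped rel the branch structure to a cellular one without creating new components. For the second, I would allow the distance to vary within $[R/2,4R]$, which only enlarges the finite set. Once Step 3 is settled, distinct splittings in the same combinatorial class are isotopic through splittings, and finiteness follows.
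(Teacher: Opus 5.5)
\textbf{There is a genuine gap, in Step 2, which is where all the content of the statement lies.}

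Over a cell $c$ inside a sector, $N(\Sigma)\cong c\times I$ contains a single sheet of $\Sigma$. The number of normal disk types per simplex bounds the sheets of $\Sigma$, not the sheets of the splitting $\Sigma'$. There are also no ``gaps between local sheets'' in a fiber of $N(\Sigma)$ to choose among.

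A splitting removes an $I$-bundle $J$ whose intersection with a single fiber $I_x$ can have many components. So $\pi|_J$ is an immersion of large multiplicity, not an embedded subsurface, and $\Sigma'$ has one sheet over $x$ for each component of $I_x\cap N(\Sigma')$. Consider the splittings the paper actually uses: cutting along a carried lamination to leafwise depth about $R$, so that carried disks continue unambiguously to radius $2R$. For these, the sheets over $x$ correspond to the distinct combinatorial types of leafwise $R$-disks through $I_x$, and their number generally grows with $R$.

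Your recorded data, by contrast, range over a set that does not depend on $R$ once $2R\ge\operatorname{diam}\Sigma$. From then on, Step 1 (your only use of the distance hypothesis) imposes no constraint. In any case Step 1 only constrains where the new vertical boundary sits, not how far $J$ extends. So the data of Step 2 do not determine the splitting. What is missing is a bound, in terms of $R$, on the number of components of $J\cap I_x$, and this requires using the hypothesis inside $J$ rather than in $\Sigma$.

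\textbf{Even that is not enough, and this is why Step 3 cannot be done by an isotopy.} If, as the definition is worded, only the new vertical boundary is constrained, the class you are counting is infinite. Take a point of new vertical boundary at depth about $R$ and extend $J$ there by $n$ thin parallel fingers of length about $R/2$ that cross a branch curve of $\Sigma$. This can be done even inside a single interstitial region of a carried lamination. All new vertical boundary stays at depth in $[R,2R]$. However, each finger adds sectors and two double points to the branch locus of $\Sigma'$, so different $n$ give non-isotopic splittings. No straightening of the new boundary into cellular form can identify them.

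\textbf{The missing idea is a normal form.} A distance-$R$ splitting should be the one cut out of $N(\Sigma)$ along the interstitial regions of a fully carried lamination $\Lambda$ to leafwise depth in $[R,2R]$. Then each sheet of $\Sigma'$ over $x$ is labelled by the combinatorial type (cells visited and branch choices made) of a carried leafwise disk of radius at most $2R$ through $I_x$. Such a disk meets a number of cells bounded in terms of $R$, so these types form a finite set $\mathcal{P}_{2R}$. The splitting $\Sigma'$ is then determined up to isotopy by finite data drawn from $\mathcal{P}_{2R}$, which gives finiteness with a bound that grows with $R$.

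The paper states this proposition with no argument, so there is no route of theirs to compare against; this normalization is what a proof has to supply.
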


\section{Arbitrarily Large Parallel Loci}\label{section: swaths}

In this section, we prove the following result. 
\begin{theorem}\label{noswaths}
    Let $M$ be a closed hyperbolic 3-manifold. Then there exists a uniform constant $R\in\N$ such that for any pA flow $\phi$ on $M$, the $R$-parallel locus $P_R(\widetilde{L}^s, \widetilde{L}^u; \cSigs)$ is the empty set for any two leaves $\widetilde{L}^s, \widetilde{L}^u\subset \cLams, \cLamu$. 
\end{theorem}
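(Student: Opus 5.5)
We argue by contradiction, following the outline in the introduction. Suppose no uniform $R$ exists. Then there is a sequence of pA flows $\phi_i$ on $M$ and radii $R_i\to\infty$ with nonempty $R_i$-parallel loci. By Theorem \ref{thm: finite pairs of Kneser branched surfaces} and pigeonhole, we pass to a subsequence for which all $\Lam^s_i$ are carried by one Kneser branched surface $\Sigma^s$ and all $\Lam^u_i$ by one $\Sigma^u$, simultaneously normal to $\tau$. For each $i$ we get dual $R_i$-disks $D^s_i\subset\widetilde L^s_i$ and $D^u_i\subset\widetilde L^u_{1,i}$, with $D^s_i$ a section of the unstable $I$-bundle over $D^u_i$. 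Using the deck group and compactness of $M$, we translate so that the centers lie in a fixed fundamental domain. We then apply Lemma \ref{lemma: disk converge to lam} twice, first to $D^s_i$ in $\Sigma^s$ and then, on a further subsequence, to $D^u_i$ in $\Sigma^u$. The disks converge $C^1$ on compact sets to complete leaves $L^s_\infty$ and $L^u_\infty$. Because $D^s_i$ lies in the unstable fibered neighborhood of $D^u_i$ and projects homeomorphically onto it, and the $I$-fibers of $N(\Sigma^u)$ can be taken arbitrarily short after splitting $\Sigma^u$ to a large distance (only finitely many choices, by the finiteness of distance-$R$ splittings), the two limits coincide. Taking closures and passing to a minimal sublamination gives a lamination $\Lam_\infty$ carried by both $\Sigma^s$ and $\Sigma^u$. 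It is a limit of stable leaves and also a limit of unstable leaves.

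We then split into two cases according to whether $\Lam_\infty$ has holonomy.

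\emph{No holonomy.} We copy Case I of Section \ref{section: anosov to pA}. Candel's theorem gives a transverse invariant measure on $\Lam_\infty$, hence a positive real solution of the branch equations of a splitting $\Sigma_\epsilon$ from Lemma \ref{lemma: construct B epsilon}. Rational approximations give carried closed surfaces $S_w$ whose tangent planes are within $\epsilon$ of a plane field $\xi_\infty$ extending $T\Lam_\infty$. Separately, we build an extension of $T\Lam^s_i$ (or $T\Lam^u_i$) near $\Lam_\infty$ that has a nonvanishing section, namely the flow direction $X_i$, exactly as in Lemma \ref{lemma: extend lim lam tangent to neighborhood}. In the pA case we must also check that the singular orbits stay away from the neighborhood $U$ of $\Lam_\infty$, or else handle them through the finitely many gut regions of Theorem \ref{thm: finite singular orbits}. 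By Lemma \ref{lemma: homotopic plane fields} this forces $e(\xi_\infty)=0$ on $U$, so $\chi(S_w)=0$. That contradicts the Remark that essential branched surfaces in hyperbolic $M$ carry no tori.

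\emph{Holonomy.} As in Case II, we find a closed curve $\gamma_0$ in a leaf of $\Lam_\infty$ with one-sided contracting holonomy, together with a transversal $I_2$ and a return map $F(I_2)\subset I_2$. For large $i$, the Brouwer argument produces closed curves $\gamma^s_i\subset\Lam^s_i$ and $\gamma^u_i\subset\Lam^u_i$, both isotopic to the same curve $\gamma$ and with holonomy contracting on the same side. Each is the core of an annular stable or unstable leaf, so each is freely homotopic to a closed orbit of $\phi_i$. The contraction direction distinguishes them. A stable annulus near a periodic orbit contracts in forward time, and an unstable one contracts in backward time. Both annuli are also nearly tangent to the same plane near $\gamma$. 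We aim to show that two closed orbits in the same free homotopy class, bounding stable and unstable annuli that are nearly parallel along a long region, would form a lozenge chain or perfect fits configuration with incompatible orientations. This contradicts the structure of the orbit space, where in a hyperbolic manifold a stable and an unstable leaf cannot make a perfect fit on the wrong side along a common holonomy direction.

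The main obstacle is the holonomy case. The no-holonomy case largely reuses Section \ref{section: anosov to pA}. In the holonomy case one has to turn limiting information (same-sided contraction along $\gamma$ for both $\Lam^s_i$ and $\Lam^u_i$) into a genuine contradiction with pA dynamics. My first attempt would be to lift to the orbit space. There, $\gamma^s_i$ and $\gamma^u_i$ give a deck element fixing a stable and an unstable leaf that are close and nearly parallel on large regions. From this I would extract a pair of perfect fits whose existence, with the prescribed contraction sides, violates the local product structure.
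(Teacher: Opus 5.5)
Your reduction to a common minimal limit sublamination and your no-holonomy case follow the paper's route. The holonomy case, however, has a genuine gap. You stop once you have the anti-homotopic pair $\gamma^s_i,\gamma^u_i$, which is exactly Proposition \ref{prop: get perfect fits}, and only announce that it should contradict the orbit-space structure. Anti-homotopy together with ``nearly parallel annuli'' is not by itself contradictory. The principle you invoke, that a stable and an unstable leaf cannot make a perfect fit on the wrong side along a common holonomy direction, is not a fact you can cite. The two corners of a lozenge are closed orbits freely homotopic to each other's inverses, and their stable and unstable half-leaves make perfect fits. Lozenges do occur for Anosov flows on hyperbolic 3-manifolds, for instance skewed $\R$-covered ones. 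Indeed, the paper itself has to invoke Fenley's chain-of-lozenges theorem rather than treat anti-homotopy as contradictory. So the contradiction must come from the specific configuration produced by the limiting argument, and your proposal contains no mechanism for extracting it.

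The paper supplies that mechanism in several steps, none of which appear in your proposal:
\begin{itemize}
\item It picks $\gamma^s_i,\gamma^u_i$ from the \emph{maximal} fixed points $x^s_i,x^u_i$ of the return maps, so that the annular leaves $A^s,A^u$ are topmost along the transversal.
\item It builds a common transversal $J$ on which the stable and unstable return maps $H^s_i,H^u_i$ both send $J$ into its interior and are $C^0$-close (Proposition \ref{prop: common fence}). This gives a one-sided fence running out an end of both annuli, with width tending to $0$.
\item It introduces the dual annuli: $B^u$, the unstable leaf through the orbit $\gamma^s$, and $B^s$, the stable leaf through $\gamma^u$.
\item It does a case analysis on how $B^u$ crosses $A^s$. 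In the first case, $B^u$ meets $J$ above $x^u_i$, which contradicts maximality or produces a nullhomotopic closed transversal to $\Lam^u$. In the second, both dual leaves are trapped in the shrinking fence, which is ruled out by an ordering argument. In the third, Fenley's theorem joins the anti-homotopic orbits by a chain of lozenges, and the paper checks that neither option fits the fence. A corner-only chain fails by the cyclic order of the four ends. Adjacent lozenges sharing a side fail because they would give non-separated ends joined by a co-orientation-preserving transversal through the fence.
\end{itemize}

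A smaller point concerns your no-holonomy case. For pA flows the product-region extension of Lemma \ref{lemma: extend lim lam tangent to neighborhood} does not apply verbatim, because complementary regions contain guts. The paper instead fills the complementary regions of both $\Lam^s_i$ and $\Lam^u_i$ outside small guts kept disjoint from $U$. It then uses the line field $T\mathcal{F}_i\cap T\mathcal{F}^u_i$ as the nonvanishing section. You flag this issue but do not resolve it.
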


Assume for the sake of contradiction that there is an infinite sequence $\{\phi_i\}$ of non orbit equivalent pA flows with stable and unstable laminations $\Lams_i, \Lamu_i$ containing leaves realizing arbitrarily large parallel locus. We show that these parallel regions propagate to limit objects.

\begin{prop}\label{prop: stable and unstable limits share sublamination}
        Let $\{\phi_i\}$ be a sequence of flows with stable and unstable laminations $\Lams_i, \Lamu_i$ carried by branched surfaces $\cSigs, \cSigu$ so that, for any $R\in\N$, there exists some $i$ and leaves $\widetilde{L}_i^s, \widetilde{L}_i^u\subset \cLams_i, \cLamu_i$ with nonempty $R$-parallel locus $P_R(\widetilde{L}_i^s, \widetilde{L}_i^u; \cSigs, \cSigu)$. Then there is a subsequence of pairs $\Lam^s_{i_k}, \Lam^u_{i_k} $ such that the limit of stable laminations $\Lam^s_\text{lim}$ and the limit of unstable laminations $\Lam^u_\text{lim}$ share a nonempty common sublamination $\Lam_\text{lim}$.

\end{prop}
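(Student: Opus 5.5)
The plan is to extract, from the hypothesis, a sequence of dual $R_k$-disk pairs with $R_k\to\infty$, push these pairs down to $M$, and pass to limits using Lemma \ref{lemma: disk converge to lam} applied separately to the stable and the unstable disks. For each $k$, choose $i_k$ and leaves $\widetilde{L}^s_{i_k},\widetilde{L}^u_{i_k}$ with a point $x_k\in P_{R_k}(\widetilde{L}^s_{i_k},\widetilde{L}^u_{i_k};\cSigs,\cSigu)$. This gives dual disks $D^s_k\subset\widetilde{L}^s_{i_k}$ of radius $R_k$ and $D^u_k\subset\widetilde{L}^u_{1,i_k}$. Since the parallel locus is equivariant under deck transformations, we may translate by a deck transformation so that the centers $x_k$ lie in a fixed compact fundamental domain. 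After passing to a subsequence, $x_k\to x_\infty$.

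The first step is to control the size of $D^u_k$. The projection $\pi_u|_{D^s_k}$ is a homeomorphism onto $D^u_k$ along $I$-fibers of $\widetilde N(\cSigu)$. These fibers have uniformly bounded length, and by normality their leafwise metrics are uniformly comparable. Hence $D^u_k$ contains a leafwise disk of radius $\geq cR_k-C$ about $\pi_u(x_k)$, for constants $c,C$ depending only on $\Sigma^u$ and $\tau$. By Lemma \ref{lemma: laminations converge to limit}, after passing to a further subsequence, $\Lams_{i_k}\to\Lams_{\lim}$ and $\Lamu_{i_k}\to\Lamu_{\lim}$, with $C^1$ convergence of tangent planes. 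By Lemma \ref{lemma: disk converge to lam}, after a further subsequence the projected stable disks converge, uniformly on leafwise balls, to a complete leaf $L^s_\infty$ through $x_\infty$ carried by $\Sigma^s$. Similarly, the projected unstable disks converge to a complete leaf $L^u_\infty$ carried by $\Sigma^u$. Moreover $L^s_\infty\subset\Lams_{\lim}$ and $L^u_\infty\subset\Lamu_{\lim}$, because these limit laminations are closed limits of the $\Lam_{i_k}$.

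The key step is to show that $L^s_\infty$ and $L^u_\infty$ coincide, or at least that $L^s_\infty$ is a leaf of $\Lamu_{\lim}$. Over every compact leafwise ball, $D^s_k$ is a section of the unstable $I$-bundle over $D^u_k$. Thus each point of $D^s_k$ lies in $\widetilde N(\cSigu)$ at bounded fiber distance from $D^u_k$, and its tangent plane is within a small angle of the corresponding tangent plane of $\Lamu_{i_k}$.

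The real point is that the stable disk must actually lie in the unstable lamination in the limit, not merely in $N(\Sigma^u)$. I would first try to use minimality and nesting. Along the fiber $I_{\pi_u(x_k)}(\Sigma^u)$, consider the unstable leaves of $\Lamu_{i_k}$ crossing it. Transversality of $\Lams_{i_k}$ and $\Lamu_{i_k}$, together with the fact that $D^s_k$ is a section over an $R_k$-disk, forces $D^s_k$ to lie in a single complementary gap between consecutive unstable sheets over that region. If instead $D^s_k$ crossed an unstable leaf, the intersection would be a flow segment. That segment would have to exit the $R_k$-disk, and its exit would be detected at the fiber boundary. So $D^s_k$ lies between two unstable sheets $D^u_{k,\pm}$ that are dual to it over a radius comparable to $R_k$. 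Passing to the limit, the two bounding sheets converge to unstable leaves $L^u_{\infty,\pm}\subset\Lamu_{\lim}$, with $L^s_\infty$ squeezed between them in a product region.

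This is where I expect the main obstacle: ruling out a nontrivial product gap in the limit. I would handle it by replacing the disks with their closures. The closure $\overline{L^s_\infty}$ is a sublamination of $\Lams_{\lim}$, and the closure $\overline{L^u_{\infty,+}}$ is a sublamination of $\Lamu_{\lim}$. Both are carried in the product region, so they have the same leaf-space limit sets. I would take minimal sublaminations: a minimal sublamination of $\overline{L^s_\infty}$ must accumulate on the boundary sheets of the product gap. This uses the fact that essential laminations have no product complementary regions with non-compact interior, together with Denjoy-split complementary regions being only the finitely many ones of Theorem \ref{thm: finite singular orbits}. From this I get a common sublamination $\Lam_{\lim}\subset\Lams_{\lim}\cap\Lamu_{\lim}$, which is nonempty since it contains the limit leaf.
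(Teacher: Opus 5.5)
Your setup is sound and matches the start of the paper's argument: choosing dual $R_k$-disk pairs, recentering by deck transformations, and applying Lemma \ref{lemma: disk converge to lam} to the stable and the unstable disks along a common subsequence. The gap is the step you yourself flag as the key one.

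The assertion that transversality forces $D^s_k$ into a single complementary gap between consecutive unstable sheets is false. Two sections of the same unstable $I$-bundle can cross transversely along curves, here flow segments. Such a segment simply runs across $D^s_k$ and leaves through $\partial D^s_k$, and nothing at the fiber boundary is violated. In fact $\Lamu$ meets each stable leaf in a lamination by flow lines whose complementary pieces are the thin strips created by Denjoy splitting. So for large $R_k$ the disk $D^s_k$ must cross many unstable sheets.

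The fallback ``squeezing'' argument does not repair this. Essential laminations can have product complementary regions with noncompact interior; the Denjoy-split regions here are precisely $I$-bundles over noncompact leaves. Theorem \ref{thm: finite singular orbits} concerns gut regions and says nothing about product gaps. Nothing you give forces a minimal sublamination of $\overline{L^s_\infty}$ to lie in $\Lamu_{\lim}$ rather than in a parallel copy of it.

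The missing idea is that literal coincidence is neither available nor needed. Dual disks are sections of the same unstable $I$-bundle, so they pass through the same normal disk types. The paper fixes the starting sector by pigeonhole. Each time the growing disks meet a branch arc, it passes to a subsequence in which all of them continue into the same sector, and then diagonalizes. The resulting stable and unstable limit leaves are carried identically, i.e.\ parallel in the fibered neighborhood. The common sublamination, the closure of this leaf, is then obtained \emph{up to a normal isotopy after taking limits}.

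In your notation, once you know $L^s_\infty$ is a section of the unstable $I$-bundle over $L^u_\infty$ with the same sector itinerary, conclude by a fiber (normal) isotopy identifying the two. Do not try to prove $L^s_\infty\subset\Lamu_{\lim}$ on the nose: nothing forces the heights of $D^s_k$ and $D^u_k$ in the $I$-fibers to merge in the limit.
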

\begin{proof}
    First, up to taking a subsequence, Lemma \ref{lemma: laminations converge to limit} implies that the stable laminations converge to a limit $\Lam^s_\text{lim}$, and the unstable laminations converge to a limit $\Lam^u_\text{lim}$.

    One way to proceed is to invoke Lemma \ref{lemma: disk converge to lam} and conclude that the two limit laminations contain the same closure of a leaf build from a limit of parallel disks of increasing radius, up to taking a normal isotopy after taking limits. The closure of these leafs is a common sublamination. To see this, we record the following proof. 

    For every $R\in\N$, consider the radius-$R$ disk $D^s_R(x)\subset \widetilde{L}_{i(R)}^s, D^u_R(y)\subset \widetilde{L}_{i(R)}^u$ given by the nonempty $R$-parallel locus. They track each other as the same normal disk types across a radius-$R$ region. Since all the stable branched surfaces are carried by the same stable branched surface $\cSigs$, and $\Sigma^s$ has only finitely many sectors, we can choose a subsequence of the flows so that each stable disk $D_R^s$ is centered at the same point. 

    For each $R$, we have a radius-$R$ region in both the stable and unstable leaves tracking parallel normal disks of the same type. Consider the sequence of pairs of radius-$R$ disks in the stable and unstable laminations. Observe that as the disks grow in radius, there is ambiguity only at branch loci. For any finite $R$, the radius-$R$ disk intersects finitely many branch loci arcs. By the pigeonhole principle, each time we encounter an arc of the branch locus, since there are only finitely many sector meeting at each branch locus, we can take a subsequence so that all the disks continue into the same sector after branching. Continue to take subsequences, so that in the limit, these disks limit to a leaf persisting in both the stable and unstable limits.
\end{proof}

Upon finding a common sublamination $\Lam_{\lim}$ of the stable and unstable limits, replace it with a minimal sublamination of itself to ensure that every leaf is dense. There are two possibilities: either this new lamination has no holonomy, or there is a curve $\gamma$ that exhibits nontrivial holonomy. In the next two subsections, we discuss each case and reach a contradiction in each case, thereby proving Theorem \ref{noswaths}.

\subsection{Euler class if there is no holonomy}\label{subsection: no holonomy euler class} In this subsection, we show that if the limit lamination $\Lam_{\lim}$ has no holonomy, then we can find two homotopic approximations to its tangent field that have different Euler classes, thus reaching a contradiction. The proof of this case is highly similar to the proof in Section \ref{subsection: anosov no holonomy}, and we only point out the differences.

First consider the sequence of stable laminations $\Lam^s_i$ converging to $\Lam_{\lim}$. We prove the following result in parallel with Lemma \ref{lemma: extend lim lam tangent to neighborhood}.

\begin{lemma}
    For a sufficiently small neighborhood $U$ of $\Lam_{\lim}$, there is an extension $\xi_i$ of the 2-plane field $T\Lam^s_i$ to $U$ so that the Euler class $e(\xi_i)=0$.
\end{lemma}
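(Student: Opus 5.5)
The plan is to repeat the construction of Lemma \ref{lemma: extend lim lam tangent to neighborhood}. The only change is that the flows $\phi_i$ are now pseudo-Anosov rather than Anosov, so the stable lamination $\Lam^s_i$ comes from Denjoy splitting $\mathcal{F}^s_i$ along the singular orbits (or along the canonical orbits fixed in Section \ref{section: anosov to pA}).

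\textbf{Step 1: extend the plane field and the vector field.} First fix $U$ to be a small neighborhood of $\Lam_{\lim}$. For large $i$, $U$ is contained in a fibered neighborhood of $\Sigma^s$ that carries $\Lam^s_i$. Each complementary region of $\Lam^s_i$ meeting $U$ is one of two types:
\begin{itemize}
\item a product region $L\times[0,1]$, coming from splitting along a regular leaf; or
\item a gut region: a solid torus around a singular orbit, bounded by finitely many annular or crown leaves.
\end{itemize}
On product regions I will define $\xi_i$ and the vector field $\widetilde X$ exactly as before, taking $\xi_i = TL\times\{0\}$ and $\widetilde X(x,t)=(X(x),0)$. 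On $\Lam^s_i$ itself, set $\xi_i=T\Lam^s_i$ and $\widetilde X = X$.

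\textbf{Step 2: handle the gut regions.} These are the new feature, and the cleanest route is to avoid them entirely. Shrink $U$ so that it misses the singular orbits and the cores of the gut regions. Two facts make this possible:
\begin{itemize}
\item $\Lam_{\lim}$ is a minimal sublamination, a limit of leaves of $\Lam^s_i$ inside $N(\Sigma^s)$.
\item By Theorem \ref{thm: finite singular orbits}, the gut regions range over finitely many triangulated solid tori that are fixed in $\tau$.
\end{itemize}
Passing to a subsequence, we may assume the gut regions are the same solid tori $V_1,\dots,V_m$ for all $i$. They lie in the complement of $N(\Sigma^s)$, up to their boundary annuli. Choosing $U$ inside $N(\Sigma^s)$ union thin collars of $\partial V_j$, $U$ meets each gut region only in a collar of its boundary leaves. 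Such a collar is a product region, parametrized by the leaf $\times[0,\delta]$, so the Step 1 formula applies there.

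If the limit happens to meet a gut region more deeply, I would instead extend $\widetilde X$ over the gut region by the flow direction of the pseudo-Anosov flow away from the singular orbit. That direction is nonsingular and tangent to the nearby stable leaves, so it defines a section of $\xi_i$ once $U$ avoids the singular orbit.

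\textbf{Step 3: conclude and check.} Once $\widetilde X$ is a continuous nowhere-vanishing section of $\xi_i$ over $U$, we get $e(\xi_i)=0$, just as before. I will verify two points.
\begin{itemize}
\item Continuity of $\xi_i$ and $\widetilde X$ across the boundary leaves of the complementary regions. This holds because Denjoy splitting identifies each boundary leaf with a leaf of $\mathcal{F}^s_i$, carrying the same flow direction.
\item Extendability of $\xi_i$ to a plane field on all of $U$. This needs $U$ to be covered by $\Lam^s_i$ together with the product pieces.
\end{itemize}

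The main obstacle is Step 2: controlling the interaction of $U$ with gut regions and singular orbits uniformly in $i$, where the flow vector field degenerates relative to the prong structure. I would first try the avoidance argument, using the fixed finite set of gut tori to keep $U$ away from the singular orbits.
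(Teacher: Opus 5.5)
Your main line is correct: keep $U$ off the guts, then extend across product pieces by $(X(x),0)$ as in Lemma \ref{lemma: extend lim lam tangent to neighborhood}. It differs from the paper mainly in how the plane field and the section are produced.

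The paper's route:
\begin{itemize}
\item It passes to a branched cover over the singular orbits so that $\Lam^s_i$ is co-orientable.
\item It fills every complementary region of $\Lam^s_i$, guts included, with a ``stack of chairs''/monkey-saddle foliation. This gives a genuine foliation $\mathcal{F}_i$ of $M$, and it sets $\xi_i=T\mathcal{F}_i|_U$.
\item The nonvanishing section is the line field $T\mathcal{F}_i\cap T\mathcal{F}^u_i$. Here $\mathcal{F}^u_i$ fills the interstitial regions of $\Lam^u_i$ and is defined off small guts, which the paper asserts are disjoint from $U$.
\end{itemize}
This buys a globally defined foliation and a section that comes for free from transversality. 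It never has to match flow directions across the two sides of each gap.

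Your version is more elementary. It needs no monkey saddles and no unstable filling, whose transversality to $\mathcal{F}_i$ the paper only asserts ``by construction.'' You also use Theorem \ref{thm: finite singular orbits} to freeze the gut tori $V_j$ along a subsequence. Then $\Lam_{\lim}\cap\text{int}\,V_j=\emptyset$, and $U$ can be kept off their cores uniformly in $i$. This justifies a step the paper states in one line.

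A few things to tidy:
\begin{itemize}
\item In the pseudo-Anosov case the complementary regions are ideal polygon bundles, not regions $L\times[0,1]$ coming from regular leaves. So the pieces of $U\setminus\Lam^s_i$ you must handle are the interstitial $I$-bundles inside $N(\Sigma^s)$, together with the collars of $\partial V_j$. The two sides of each interstitial piece are copies of the same prong, which is why $(X,0)$ is well defined there. Say this explicitly.
\item You need $\xi_i$ oriented for $e(\xi_i)$ to make sense. The paper arranges this with a branched cover over the singular orbits. Since your $U$ avoids those orbits, an honest finite cover over $U$ suffices.
\item Drop the fallback in Step 2. It is unnecessary, since the limit cannot enter $\text{int}\,V_j$. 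It also never specifies $\xi_i$ inside the gut. Moreover, no plane field containing the flow direction and agreeing with the boundary leaves extends over a meridian disk of a $p$-prong gut for $p\ge 3$, because the induced transverse line field has index $1-p/2$.
\end{itemize}
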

\begin{proof}
    Up to taking a branched cover over the singular orbits, we can assume that the stable lamination is co-orientable. Then, fill the complementary region for each $\Lam^s_i$ with the "stack of chairs" or monkey saddles construction. See for example Example 4.18 in Calegari's book \cite{foliationbook}. We obtain a foliation $\mathcal{F}_i$ for each $i$. Then, we take the restriction of the tangent bundle $\xi_i = T\mathcal{F}_i|_{U}$.

    As before, the tangent bundles $T\Lam^s_i$ over the stable laminations all admit no-where vanishing sections given by the flow direction. We need to extend the sections to the neighborhood $U$. Another way to think about the non-vanishing section on $T\Lam_i^s$ is that we obtain it at any point $x$ by intersecting the tangent plane of the stable lamination with the tangent plant of the unstable lamination. After stuffing the complementary regions of $\Lam^s_i$, we stuff the interstitial part of the complementary regions of the unstable laminations $\Lam^u_i$ so that we obtain a foliation $\mathcal{F}^u_i$ outside the guts, which can be made a sufficiently small compact neighborhood of the singular orbits. In particular, the guts can be made to be disjoint from $U$. By construction, the foliations $\mathcal{F}_i, \mathcal{F}^u_i$ are transverse wherever they are both defined. For any point $x\in U$, consider the intersection $T_x\mathcal{F}^u_i\cap T_x\mathcal{F}_i$, which agrees with the flow direction on $T\Lam^s_i$ where defined and gives a non-vanishing section of the bundle $\xi_i$ on $U$. 
    
    Having found a non-vanishing section of $\xi_i$ on $U$, we conclude that $e(\xi_i)=0$. 
\end{proof}

Next, extend the tangent bundle $T\Lam_{\lim}$ to a small neighborhood $U$, and call the bundle $\xi_{\lim}$. As in Section \ref{subsection: anosov no holonomy}, we can arrange for $\xi_{\lim}$ be to homotopic to $\xi_i$ for all sufficiently large $i$ using the convergence of tangent fields. Since $e(\xi_i)=0$, we conclude that $e(\xi_{\lim})=0$.

Now we construct another approximation to the tangent field $T\Lam_{\lim}$.

If $\Lam_{\lim}$ has no holonomy, there exists a transverse measure on it. An invariant measure gives a real-valued solution $\vec{x}$ so the branched surface equations $Ax=0$ given by $\Sigma^s$ is satisfied. There exists a sequence of rational approximations $\vec{x}_i$ satisfying the branch equations, so that $A\vec{x}_i=0$. Use the construction after Lemma \ref{lemma: construct B epsilon} to construct approximating closed smooth surfaces $S_i$ realizing the branched surface equations for each $i$. We conclude as before that after some homotopy in the neighborhood $U$, the bundle $TS_i$ is the pullback bundle $i^*\xi_{\lim}$ where $i: S_i\hookrightarrow U$ is the inclusion map. Since $\Sigma^s$ and its splittings carry no spheres or tori, the surfaces $S_i$ are of genus at least $2$, so the Euler class $e(\xi_{\lim})\neq 0$.

\subsection{When there is holonomy}\label{subsection: holonomy swaths}

Next we consider the case where there is holonomy in the shared limit laminations between the limit of the stable laminations and the limit of the unstable laminations. There is an oriented curve $\gamma_0$ in a leaf $L\subset\Lam_\text{lim}$ such that $\Lam_\text{lim}$ has holonomy along $\gamma_0$. 

\begin{prop}\label{prop: get perfect fits}
    Let $\Lam^s_i, \Lam_i^u$ be sequences of stable and unstable laminations of Anosov or pseudo-Anosov flows. Assume as before that both sequences converge to limit laminations $\Lam_{\lim}^s, \Lam_{\lim}^u$, and that the limits share a sublamination $\Lam_{\lim}$. If $\Lam_{\lim}$ has holonomy, there exists $N_1\in\N$ so that for $i>N_1$, there are closed curves $\gamma^s_i, \gamma^u_i$ in $\Lam^s_i, \Lam^u_i$ respectively with the following properties:
    \begin{itemize}
        \item All $\gamma^s_i$ represent the same isotopy class in $M$: $[\gamma^s_i]= [\gamma^s_j]$;
        \item All $\gamma^u_i$ represent the same isotopy class: $[\gamma^u_i]= [\gamma^u_j]$;
        \item The curves $\gamma^s_i, \gamma^u_i$ are anti-homotopic in $M$: $[\gamma^s_i] =-[\gamma^u_i]$.
    \end{itemize} 
\end{prop}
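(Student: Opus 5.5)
The plan is to run the holonomy-spinning argument from Case II of Section \ref{section: anosov to pA} twice, once for the stable sequence and once for the unstable sequence. Both runs use the same limit data in $\Lam_{\lim}$, and the orientation comparison then comes from the dynamics of the flows.

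First, I fix the data in the shared minimal sublamination $\Lam_{\lim}$. I take a curve $\gamma_0 \subset L \subset \Lam_{\lim}$ with holonomy $f_{\gamma_0}: I_0 \to I_0$ and replace it by a curve with one-sided contracting holonomy, using a fixed point of $f_{\gamma_0}$. Using minimality, I choose a leafwise path $\gamma_1$ from $\gamma_0$ back into the contracting side. I choose transversals $I_1, I_2$ exactly as in Case II and an iterate $k$ so that $F = f_{\gamma_0}^k \circ f_{\gamma_1}$ satisfies $F(I_2) \subset \operatorname{int} I_2$.

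The key observation is that this data lives in $\Lam_{\lim}$, which is simultaneously a sublamination of $\Lam^s_{\lim}$ and of $\Lam^u_{\lim}$. The transversal $I_2$ is transverse both to $\Sigma^s$ and to $\Sigma^u$ near $\gamma_0 \cup \gamma_1$, since the two limits are $C^1$ tangent along $\Lam_{\lim}$ by Lemma \ref{lemma: disk converge to lam} and normality. Using $C^1$ convergence of $T\Lam^s_i \to T\Lam^s_{\lim}$ and $T\Lam^u_i \to T\Lam^u_{\lim}$ from Lemma \ref{lemma: laminations converge to limit}, I find $N_1$ such that for $i > N_1$, small supported isotopies place copies of $\gamma_0, \gamma_1$ in leaves of $\Lam^s_i$ and, separately, in leaves of $\Lam^u_i$. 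The perturbed holonomies $F^s_i$ and $F^u_i$ then still map $I_2$ into itself. Brouwer gives fixed points $p^s_i, p^u_i$, and hence closed leafwise curves $\gamma^s_i \subset \Lam^s_i$ and $\gamma^u_i \subset \Lam^u_i$, each following $\gamma_1$ and then winding $k$ times around $\gamma_0$. As in Case II, all of these are transverse pushoffs of the single limit curve $\gamma$ through a fixed point of $F$. This gives the first two bullets, and in fact shows that $\gamma^s_i$ and $\gamma^u_i$ are all freely homotopic, indeed isotopic, to $\gamma$ as unoriented curves.

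The remaining task is to fix the orientations. A closed curve in a stable (resp. unstable) leaf with contracting holonomy must be freely homotopic within its leaf to a periodic orbit; the holonomy contraction forces the leaf to be an annulus or Möbius band whose core is an orbit. Its orientation as a curve with contracting holonomy agrees with the flow direction in a stable leaf, because stable holonomy contracts in forward time. In an unstable leaf it is opposite to the flow direction, because unstable leaves contract in backward time. Since $\gamma^s_i$ and $\gamma^u_i$ are both oriented by the same contracting direction of $F$, I get $[\gamma^s_i] = [\alpha^s_i]$ and $[\gamma^u_i] = -[\alpha^u_i]$ for periodic orbits $\alpha^s_i, \alpha^u_i$. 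Combined with $[\gamma^s_i] = [\gamma^u_i]$ as oriented curves from the common limit, this gives the anti-homotopy in the form stated: after reorienting $\gamma^u_i$ by its flow direction, $[\gamma^s_i] = -[\gamma^u_i]$.

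The main obstacle is this last step. I need to justify that the contracting holonomy direction of the limit transversal matches the dynamical contraction of each $\Lam^s_i$ in forward time and of each $\Lam^u_i$ in backward time. The concern is that the Denjoy-split leaves near $\gamma$ could have one-sided holonomy that is an artifact of splitting rather than of the dynamics. To handle this, I would first use that holonomy along a curve in an annular leaf of $\Lam^s_i$ comes either from the flow's contraction or from the boundary leaves of a split region, which spiral in the same forward-time direction. Then I would argue by continuity in $i$ that the sign is stable.
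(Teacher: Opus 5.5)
Your proposal is essentially the paper's proof. The same holonomy-spinning construction (a one-sided contracting $\gamma_0$, the return path $\gamma_1$, $F=f_{\gamma_0}^k\circ f_{\gamma_1}$ with $F(I_2)\subset I_2$, and Brouwer fixed points of the perturbed maps $F^s_i,F^u_i$) is run separately on the stable and unstable sequences, and the anti-homotopy comes from stable and unstable holonomy contracting in opposite time directions, which the paper simply asserts. Your worry about splitting artifacts is harmless, since the holonomy of a Denjoy-split lamination is just the restriction of the foliation's holonomy to the surviving leaves.

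You share one slip with the paper. Along a periodic orbit traversed in the flow direction, the holonomy of the stable lamination is expanding (it is the return map in the unstable direction), and that of the unstable lamination is contracting, so your two directions are swapped. Only their relative sign is used, so the conclusion $[\gamma^s_i]=-[\gamma^u_i]$ is unaffected.
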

\begin{proof}
    By definition, we can find a transversal $I_0$ to $\gamma_0$ so that the holonomy transport gives a map $f_{\gamma_0}:I_0\rightarrow I_0$ satisfying $f_{\gamma_0}(I_0)\subset I_0$. In fact, we can find a fixed point $f_{\gamma_0}(p)=p$ such that the closed curve obtained by parallel transport of $p$ has one-sided contracting holonomy.

    In the limit lamination $\Lam_{\lim}$, let $L$ be the leaf containing $\gamma_0$, and let $I_1$ be the component of $I_0\setminus f_{\gamma_0}(I_0)$ on the contracting side of $\gamma_0$. By minimality of $\Lam_{\lim}$, $L$ meets the open transversal $I_1$. There is a leafwise path $\gamma_1\subset L$ connecting $p_0\in\gamma_0\cap I_0$ to a point $p_1\in I_1$. Consider a smaller transversal $I_2\subset I_0$ such that the parallel transport $f_{\gamma_1}:I_2\rightarrow I_1$ satisfies $f_{\gamma_1}(I_2)\subset I_1$. See Figure \ref{fig: holonomy spinning} for an illustration. 

    \begin{figure}
        \centering
        \includegraphics[width=0.65\linewidth]{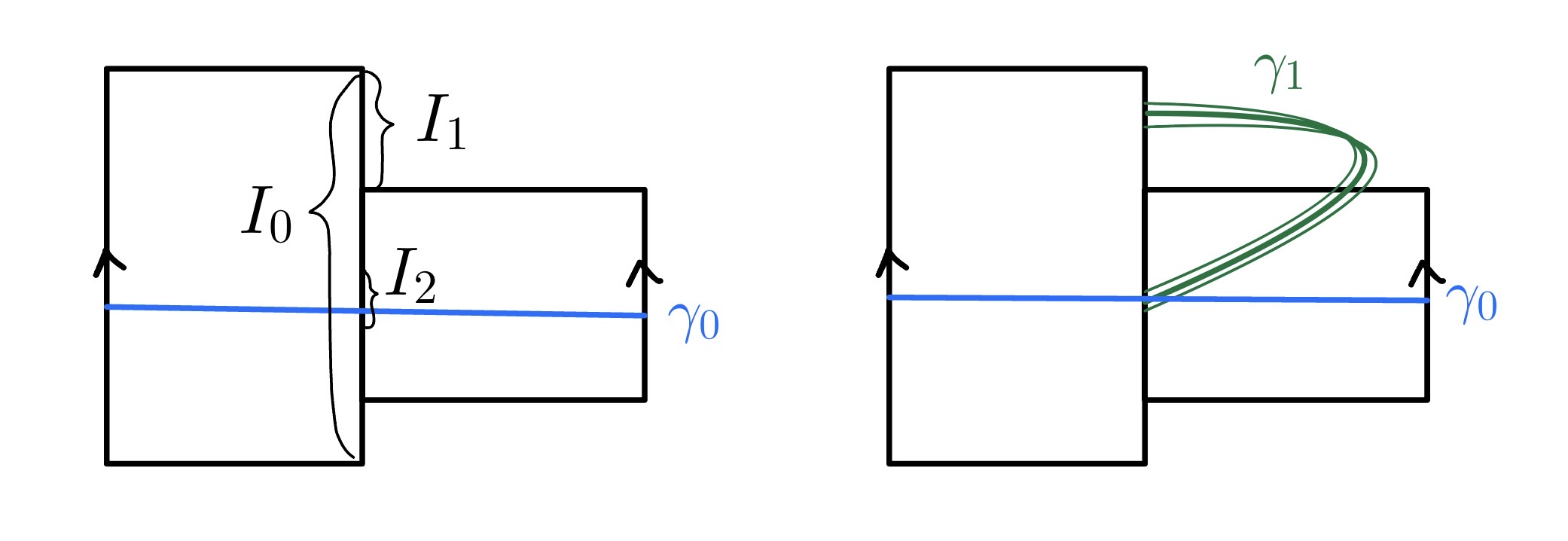}
        \caption{The left figure illustrates $\gamma_0$ with one-sided contracting holonomy, where the left and right edges of the picture are identified. The right figure shows a curve $\gamma_1$ in a leaf so that its holonomy map maps $I_2$ to a subset of $I_1$.}
        \label{fig: holonomy spinning}
    \end{figure}
    
    In the sequence $\{\Lam_i^s\}$, there exists $N_1\in\N$ so that for $i>N_1$, we can perform an ambient isotopy so that $\gamma_1\subset \Lam^s_i$. Choose a compact tubular neighborhood $U$ of $\gamma_1$ and arrange the isotopies to be $C^1$ small and supported on $U$. Then, after possibly shrinking $I_2$ so that the closure of $f_{\gamma_1}(I_2)$ is contained in the interior of $I_1$, the holonomy maps $f_{\gamma_1}^{s,i}:I_2^{}\rightarrow I_1$ satisfy $f_{\gamma_1}^{s,i}(I_2)\subset I_1$. 

    Since $\gamma_0$ in the limit has one-sided contracting holonomy, and $I_1, I_2$ lie on the contracting side, there exists $k\in \N$ so that an iterate $f_{\gamma_0}^k(I_1)\subset I_2$. This means $$F = f_{\gamma_0}^k\circ f_{\gamma_1}:I_2\rightarrow I_2$$ satisfies $F(I_2)\subset I_2$. For sufficiently large i, after the small ambient isotopies identifying the same curves $\gamma_0, \gamma_1$ with curves lying in $\Lam_i^s$, the corresponding holonomy maps satisfy $(f_{\gamma_0}^{s,i})^k(I_1)\subset I_2$ for $i>N_2$. For $i>\max(N_1, N_2)$, we have the holonomy transport $$F^s_i=(f_{\gamma_0}^{s,i})^k\circ f_{\gamma_1}^{s,i}:I_2\rightarrow I_2$$ satisfying $F_i^s(I_2)\subset I_2$. Since $I_2$ is compact, by the Brouwer fixed point theorem, there is a point $p_i^s\in I_2$ with $F_i^s(p_i^s)=p_i^s$, and the parallel transport along $p_i^s$ yields a closed curve $\gamma^s_i\subset\Lam^s_i$ for each $i$. If the fixed point is not unique, choose the maximum fixed point. The curve $\gamma^s_i$ first follows $\gamma_1$ and then winds around a local copy of $\gamma_0$ $k$ times, so all $\gamma_i^s$ are of the same isotopy class. We have found curves $\gamma_i^s\subset \Lam^s_i$ such that $[\gamma_i^s]=[\gamma_j^s]$ for all sufficiently large $i,j>\max(N_1, N_2)$.

    Using the same argument on the unstable sequence $\Lam^u_i$ yields a closed curve $\gamma^u_i$ on $\Lam^u_i$ of the same homotopy class as $\beta$ for every sufficiently large $i$. Along a stable leaf of an Anosov/pseudo-Anosov flow, transverse holonomy is contracting in the forward flow direction; along an unstable leaf, transverse holonomy is contracting in the backward flow direction. Since the return map is chosen to have one-sided contracting holonomy, the orientation of the closed curve induced by the stable flow agreed with the direction in which $\beta$ was traversed, while the orientation induced by the unstable flow is opposite. Therefore, for sufficiently large $i$, there are closed oriented curves $\gamma^s_i, \gamma^u_i$ in $\Lam^s_i, \Lam^u_i$ respectively that are anti-homotopic in $M$. In other words, $[\gamma^s_i]=-[\gamma^u_i]$.
\end{proof}

The curves $\gamma^s_i, \gamma^u_i$ constitute a pair of \textit{perfect fits}. Next, we show using the geometry near the annular leaves $A^s, A^u$ containing $\gamma^s_i, \gamma^u_i$ in $\Lam_i^u, \Lam^s_i$ that these perfect fits should not exist.

Fix some $i>\max(N_1, N_2)$, and let $\gamma^s_i$ be the closed curve in $\Lam^s_i$ obtained from the maximum of the fixed point set $x^s_i=\max\{\text{Fix}(F^s_i)\}$. Similarly, choose $\gamma^u_i$ to be obtained from the maximum $x^u_i=\max\{\text{Fix}(F^u_i)\}$. Call the leaves corresponding to $\gamma^s_i, \gamma^u_i$ the topmost annular leaves of the stable and unstable laminations intersecting $I_2$. Assume that $x^u_i>x^s_i$. The argument is analogous for the other case.


\begin{prop}[Common fence construction]\label{prop: common fence}
    Let $A_i^s$ and $A_i^u$ be stable and unstable annular leaves. That there is a common ambient transversal $J$ meeting both annuli and induced holonomy maps $H_i^s,H_i^u:J\longrightarrow J$ such that, for $\epsilon>0$, for all sufficiently large $i$,
\[
H_i^s(J)\subset\text{int}(J),H_i^u(J)\subset\text{int}(J)
\]
\[
\|H_i^s-H_i^u\|_{C^0(J)}\leq \epsilon
\]
Each one-step return to $J$ makes positive progress toward a chosen end of the corresponding annular leaf.

Further, for all sufficiently large $i$, there exists a one-sided common fence $\mathcal F_i$ based at $J$ which contains both the stable and unstable holonomy continuations and extends arbitrarily far toward the chosen ends of $A_i^s$ and $A_i^u$. Moreover, the transverse width of $\mathcal F_i$ may be bounded uniformly along the entire fence. In particular, the width of $\mathcal F_i$ tends to zero as $i\to\infty$.

\end{prop}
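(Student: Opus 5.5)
The plan is to build the common transversal $J$ from the limit picture of Proposition \ref{prop: get perfect fits} and then to obtain both the closeness estimate and the fence by a continuity and compactness argument.

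\emph{Choice of $J$.} Start from the shared minimal sublamination $\Lam_{\lim}$, the curve $\gamma_0$ with one-sided contracting holonomy, and the transversals $I_2\subset I_1\subset I_0$. Take $J$ to be a slightly shrunken closed subinterval of $I_2$ containing both $x^s_i$ and $x^u_i$ for all large $i$. This is possible because both fixed points converge into the attracting region of the limit return map $F=f_{\gamma_0}^k\circ f_{\gamma_1}$. Let $H^s_i=F^s_i$ and $H^u_i=F^u_i$ be the return maps from the proof of Proposition \ref{prop: get perfect fits}, both restricted to $J$. The limit map $F$ satisfies $F(I_2)\subset I_2$, and we can shrink so that $F(J)\subset \operatorname{int}(J)$.

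\emph{Closeness of $H^s_i$ and $H^u_i$.} Both $\Lam^s_i$ and $\Lam^u_i$ converge to laminations containing $\Lam_{\lim}$, and the tangent bundles converge by Lemma \ref{lemma: laminations converge to limit}. Holonomy along the fixed compact leafwise path $\gamma_1\cdot\gamma_0^k$ depends continuously, in $C^0$, on the plane field in a compact tubular neighborhood $U$. Hence both $H^s_i$ and $H^u_i$ converge uniformly on $J$ to $F$, which gives $\|H^s_i-H^u_i\|_{C^0(J)}\le\epsilon$ for large $i$.

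The inclusion $H^\ast_i(J)\subset\operatorname{int}(J)$ follows from the strict inclusion for $F$ together with uniform convergence, since it is an open condition. Positive progress comes from the stable and unstable dynamics. Along $A^s_i$, one return follows the forward flow direction, so it moves toward the contracting end. Along $A^u_i$ the orientation is reversed, as in the proof of Proposition \ref{prop: get perfect fits}. In both cases each return advances a definite leafwise distance, roughly the length of $\gamma_0^k$, toward the chosen end.

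\emph{Fence construction.} Define $\mathcal F_i$ as the union over $n\ge0$ of the leafwise holonomy continuations of $J$ along the iterates $(\gamma_1\gamma_0^k)^n$ in both laminations, that is, the saturation of $\bigcup_n (H^s_i)^n(J)\cup(H^u_i)^n(J)$ by the corresponding leafwise paths. The fence is one-sided because everything lies on the contracting side of $\gamma_0$. It extends arbitrarily far toward the chosen ends because each step makes positive progress.

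\emph{Uniform width.} This is the main obstacle, since an $\epsilon$ error per step could accumulate over infinitely many steps. The approach is to use the contraction instead of summing the errors. Both $H^s_i$ and $H^u_i$ map $J$ into $\operatorname{int}(J)$, so all iterates stay inside $J$, and the width of every cross-section of the fence is at most $|J|$, which is uniform. To make the width go to zero, note that $F$ contracts $J$ toward its attracting fixed point. Replace $J$ by a shrinking neighborhood $J_i$ of that point with $|J_i|\to0$, chosen so that $F(J_i)\subset\operatorname{int}(J_i)$ still holds with a margin larger than $\|H^\ast_i-F\|_{C^0}$. This is possible by choosing $|J_i|$ going to zero more slowly than the convergence rate, and it can be done because the attractor is one-sided contracting, so near the fixed point $F(x)-x$ is bounded away from zero on the contracting side.

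Within a single step, the interpolation between returns is controlled by the $C^1$ convergence from Lemma \ref{lemma: disk converge to lam} on the compact neighborhood $U$. This gives a width along the whole fence bounded by a constant times $|J_i|$, which tends to zero. The delicate point I expect to need care with is the case where the limit holonomy is only topologically contracting, with no linear rate. In that case I would pass to a further subsequence and use the maximality of the chosen fixed points $x^s_i$ and $x^u_i$ to pin the invariant intervals.
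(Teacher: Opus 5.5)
\textbf{There is a genuine gap.} It is your choice $H^\ast_i=F^\ast_i$ on a $J$ that contains the fixed points $x^\ast_i$.

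At $x^\ast_i$ the one-step return is the closed leafwise loop $\gamma^\ast_i$, which is (homotopic to) the core of $A^\ast_i$. Repeating it stays in a compact part of the annulus, so this return makes no progress toward either end. The claim that ``each one-step return makes positive progress'' therefore fails exactly at the orbit that matters. Your justification, that each return ``advances a definite leafwise distance, roughly the length of $\gamma_0^k$, toward the chosen end,'' confuses arclength travelled along the core direction with movement toward an end of the annulus.

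The fence has the same defect. Since $(H^\ast_i)^n(J)\subset J$ and every pass follows the same loop $\gamma_1\gamma_0^k$, the $(n+1)$-st continuation is a sub-strip of the first. So your $\mathcal F_i$ is just the single strip swept out by $J$ along one loop, a fixed compact set inside $U$. Nothing in your construction selects an end of $A^\ast_i$. Your ``one-sided'' refers to the transverse contracting side of $\gamma_0$, which is a different notion from a chosen end.

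\textbf{The missing ingredient is an excursion away from the core before returning.} The paper proceeds as follows:
\begin{itemize}
\item Take $p$ to be the maximal fixed point of $F$, and let $I_3$ be the component of $I_2\setminus F(I_2)$ on the contracting side of the limit curve through $p$.
\item Choose leafwise paths $\gamma_2^{(s,i)}\subset A^s_i$ and $\gamma_2^{(u,i)}\subset A^u_i$ from $x^s_i$ and $x^u_i$ to points of $I_3$. These are perturbations of a single limiting path from $p$.
\item Choose $k$ with $F^k(I_3)\subset\operatorname{int}(J)$, and set $H^\ast_i=(F^\ast_i)^k\circ f_{\gamma_2^{(\ast,i)}}$.
\end{itemize}
A return is then a non-closed leafwise path that leaves the core. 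The side of the core on which $\gamma_2$ is drawn is what chooses the end, and successive strips concatenate out that end. Your uniform-convergence argument then gives $H^\ast_i(J)\subset\operatorname{int}(J)$ and the $C^0$ estimate for these composite maps just as it did for $F$.

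\textbf{Your width-to-zero step is also not valid as written.} By continuity, $F(x)-x\to 0$ as $x\to p$, so it is not bounded away from zero near the fixed point. Moreover, if $p$ attracts only from one side (for instance if fixed points of $F$ accumulate on $p$ from the other side), there need be no small interval around $p$ mapped into its own interior. The paper's proof gives no separate argument for the width claim, so this part of your attempt goes beyond it, but it does not hold up.
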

\begin{proof}
    Assume $\gamma^s_i$ to be the closed curve obtained from the maximum fixed point $x^s_i$ of $F_i^s$, and $\gamma^u_i$ obtained from the maximum fixed point $x^u_i$ of $F_i^u$. The proof is similar to the proof of Proposition \ref{prop: get perfect fits}.

    Consider the limit lamination $\Lam_{\lim}$, and find the maximum fixed point $p$ of the holonomy map $F:I_2\rightarrow I_2$. We obtain from $p$ a closed curve $\gamma_{\lim}\subset \Lam_{\lim}$ with one-sided contracting holonomy. Let $I_3$ be the connected component of $I_2\setminus F(I_2)$ on the contracting side of $\gamma_{\lim}$. Since the stable and unstable closed curves $\gamma^u_i, \gamma^s_i$ arise as small transverse push-offs of the same limiting return curve $\gamma_{\lim}$, their fixed points $x_i^s, x_i^u$ converge to $p$. Take a transversal $J\subset \text{int}(I_2)$ containing both $x_i^s, x_i^u$. There are leafwise paths $\gamma_2^{(s,i)}\subset A^s_i$ from $x_i^s$ to some $p^s\in I_3$ and $\gamma_2^{(u,i)}\subset A^u_i$ from $x_i^u$ to some $p^u\in I_3$. These paths can both be made perturbations of a path from $p$ to a point in $I_3$, and we have two holonomy maps 
    $$f_{\gamma_2^{(s,i)}}, f_{\gamma_2^{(u,i)}}:J\rightarrow \text{int}(I_3).$$
    
    Since $\gamma_i^s, \gamma_i^u$ have one-sided contracting holonomy, there is some $k>0$ for which $F^k(I_3)\subset \text{int}(J)$ for some $k>0$. Recall that both the sequences of holonomy maps $F^s_i$ and $F_i^u$ converge to the holonomy map $F$ in the limit $F$ uniformly on a compact subset $J$. In particular, for every $\epsilon>0$, there is some $N_1\in\N$ so that $i>N_1$ implies
\[\sup_{x\in J}\left|(F^s_i)^k(x)-(F^u_i)^k(x)\right|<\sup_{x\in J}\left|(F^s_i)^k(x)-F^k(x)\right| + \sup_{x\in J}\left|(F^u_i)^k(x)-F^k(x)\right|<\epsilon.\]

    Choose $\epsilon$ to be smaller than the margin $\frac{d(F^k(J),\partial J)}{10}$. Therefore, for sufficently large $i$, we conclude that $(F_i^s)^k(I_3)\subset\text{int}(J), (F_i^u)^k(I_3)\subset\text{int}(J)$.




Define holonomy maps 
\[H_i^s = (F_i^s)^k\circ f_{\gamma_2^{(s,i)}}: J\rightarrow \text{int}(J)\]
\[H_i^u = (F_i^u)^k\circ f_{\gamma_2^{(u,i)}}: J\rightarrow \text{int}(J)\]

As before, given $\epsilon>0$, for sufficiently large $i$, we obtain
\[
\|H_i^s-H_i^u\|_{C^0(J)}\leq \epsilon.
\]

\begin{figure}
    \centering
    \includegraphics[width=0.7\linewidth]{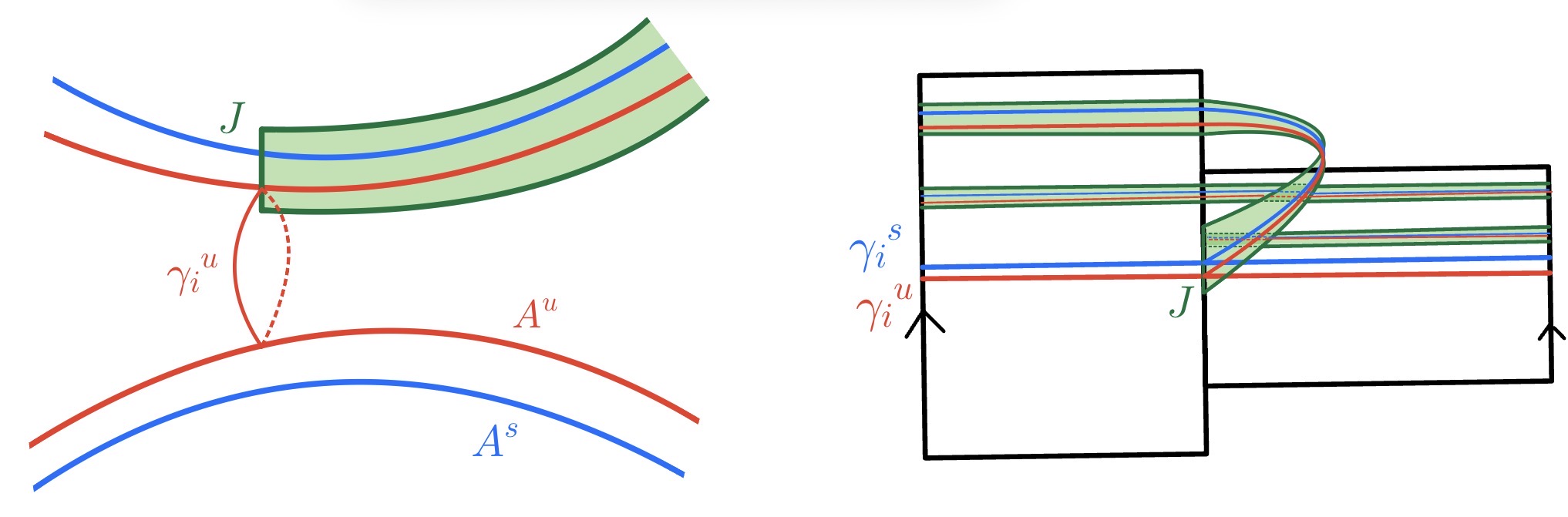}
    \caption{Left: An illustration of the fence $\mathcal{F}_i$ escaping to ends of the annular leaves $A^u, A^s$; Right: the construction of the fence after one iteration of the algorithm.}
    \label{fig: green fence}
\end{figure}

Each application of $H^s_i, H^u_i: J\rightarrow \text{int}(J)$ corresponds to winding $k$ times around $\gamma_i^s$ after going through $\gamma^s_i, \gamma^u_i$. Each time, we obtain stable and unstable transverse strips close together and ending inside $J$. On the first iteration, construct the fence $\mathcal{F}_i$ to start at $J$ and include both transverse strips, and repeat this construction to create the next segment of the fence. The fence will travel arbitrarily far into an end of each of the annular leaves. See Figure \ref{fig: green fence} for an illustration of the first iteration of creating the fence. Most importantly, the fence $\mathcal{F}_i$ has the property that any leaf meeting $J$ will continue to stay inside the fence in a specified direction. This direction is specified by which side of $\gamma_i^s, \gamma_i^u$ the path $\gamma_2^{(s,i)}, \gamma_2^{(u,i)}$ lie on. Since $\gamma_2^{(s,i)}, \gamma_2^{(u,i)}$ are 2-sided curves of their respective annular leaves, and both ends of the annular leaves $A^u, A^s$ are dense, we have two choices for a direction to travel in, and the choices will give fences to opposite ends of the annuli $A^s, A^u$.


\end{proof}

Next, fixe some sufficiently large $i$, and suppress the index $i$ for the rest of this section. We consider the geometry of the stable and unstable annular leaves $A^s, A^u$. Again assume that the fixed points satisfy $x^u>x^s$. 

\begin{definition}
    Suppose $A^s, A^u$ are annular leaves realizing a pair of perfect fits and containing a pair of closed curves $\gamma^s, \gamma^u$. Suppose further as above that $A^s, A^u$ are topmost with respect to $I_2$ and therefore $J$. We call the unique annular unstable leaf intersecting $A^s$ at a closed orbit $c_1$ the \textit{dual} of $A^s$ and denote it $B^u$. Similarly, let $B^s$ be the the \textit{dual} of $A^u$ as the unique annular stable leaf intersecting $A^u$ at a closed orbit $\gamma^u$. We have $A^s\cap B^u = \gamma^s, A^u\cap B^s = \gamma^u$, where $\gamma^s, \gamma^u$ are orbits of the flow.
\end{definition}

Since we have fences going to either end of $A^u, A^s$, without loss of generality, we observe that $\gamma^s=A^s\cap B^u$ has nontrivial intersection with a chosen fence $\mathcal{F}$. 

We discuss three cases and obtain a contradiction in each case to show that the geometry imposed by the fence $\mathcal{F}$ contradicts the topology imposed by the flow.

\subsubsection*{Case 1} Suppose that $B^u$ crosses $A^s$ in the orientation as pictured in Figure \ref{fig: swath contradiction case 1}. In the fence, $B^u$ cannot cross $A^u$, so it must intersect $J$ at a point $x'$. 

\begin{figure}
    \centering
    \includegraphics[width=0.7\linewidth]{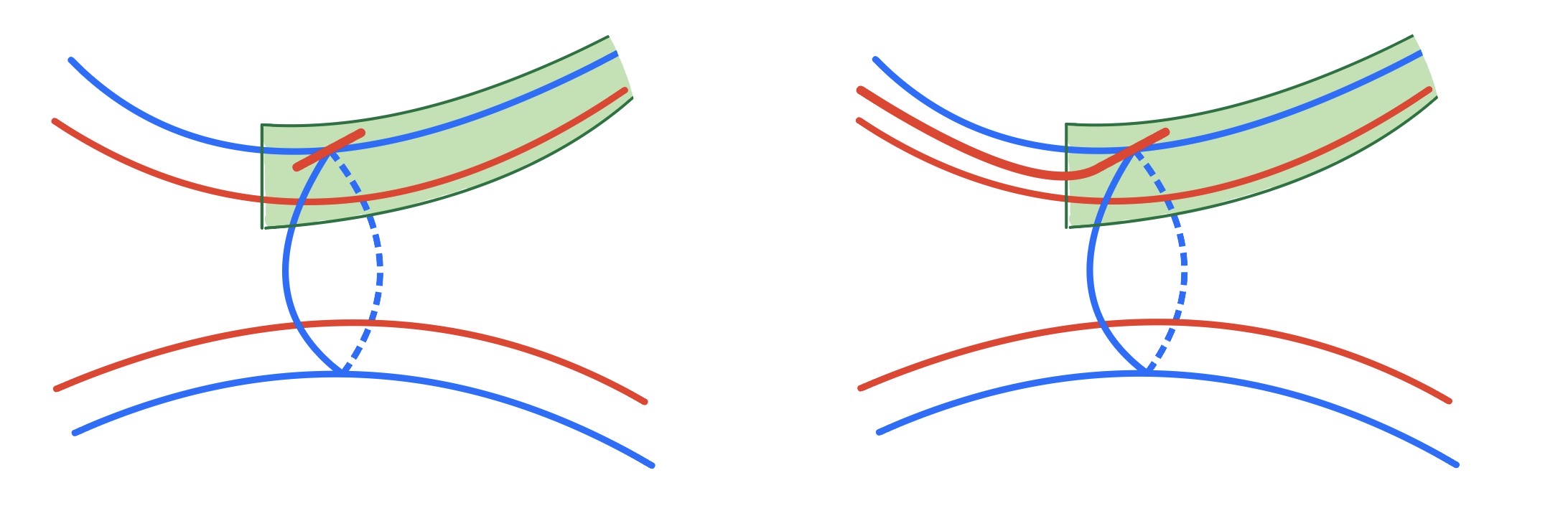}
    \caption{Case 1: the configuration of $B^u$ forces it to intersect $J$.}
    \label{fig: swath contradiction case 1}
\end{figure}

\begin{prop}
    If $B^u$ intersects $J$ at a point $x'$ so that $x>x^u_i$, either $x'$ is a fixed point of $F^u$, or we can find a closed transversal to $\Lam^u$. In either case, we obtain a contradiction.
\end{prop}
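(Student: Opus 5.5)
Let $\Phi:=F^u$ denote the unstable return map $F^u=(f^{u}_{\gamma_0})^k\circ f^u_{\gamma_1}$, and let $H^u$ be the map from Proposition \ref{prop: common fence}. The plan is to follow the leaf $B^u$ from the point $x'\in J$ under the unstable holonomy maps, using the fence $\mathcal F$ to control where it goes. Recall that $x^u$ was chosen as the \emph{maximum} fixed point of $\Phi$ on $I_2$. The transversal $J\subset\text{int}(I_2)$ contains $x^u$. Because $x'>x^u$, the point $x'$ lies strictly between $x^u$ and the top of $J$. By the one-sided contracting property inherited from the limit curve $\gamma_{\lim}$, the map $\Phi$ has no fixed points above $x^u$. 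Hence on $(x^u,\sup J]$ we have $\Phi(y)<y$ for all $y$, using $\Phi(J)\subset\text{int}(J)$. So the orbit $\Phi^n(x')$ decreases monotonically to $x^u$. The first step is to make this precise, including the dichotomy: either $x'$ is itself a fixed point (which contradicts maximality of $x^u$ unless $x'$ lies on a distinct annular leaf through the same fence), or it is not.

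\textbf{Fixed-point case.} If $\Phi(x')=x'$, then the leaf of $\Lam^u$ through $x'$ contains a closed curve parallel to $\gamma^u$. This closed curve lies on $B^u$. We would then have $B^u$ as an annular unstable leaf whose core is freely homotopic to $\gamma^u$ and lies above $A^u$ in $J$. This contradicts the choice of $A^u$ as the topmost annular leaf meeting $J$. One can also argue that $B^u$ and $A^u$ would then be two distinct unstable annuli with freely homotopic cores, giving two distinct closed orbits of the flow in the same free homotopy class on the same side. By the structure of the bifoliation (the periodic orbits of $B^u$ and $A^u$ differ: $\gamma^s$ versus $\gamma^u$, with $[\gamma^s]=-[\gamma^u]$), this is incompatible with orientation. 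Either route gives the contradiction.

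\textbf{Non-fixed case.} Here $x'\neq\Phi(x')$, and the leafwise holonomy path in $B^u$ from $x'$ returns to $J$ at a point strictly below $x'$. The plan is to close it up. Join the endpoint $\Phi(x')$ back to $x'$ by the subarc of $J$, which is transverse to $\Lam^u$. Then push the resulting loop off the leaf, tilting the leafwise part slightly in the direction of increasing transverse coordinate. Because the return is strictly monotone (one-sided), all the transverse crossings point the same way, and we obtain a closed curve transverse to $\Lam^u$. This uses the standard construction: a leafwise arc with endpoints on a common transversal and a monotone jump yields a closed transversal.

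Next we derive the contradiction. The closed transversal passes through a point of $B^u\cap A^s$, namely near the orbit $\gamma^s$. Lift it to $\widetilde M$. A closed transversal to an essential lamination is homotopically nontrivial, and its lift crosses each leaf of $\cLamu$ at most once. But the loop is built from the path in $B^u$ that winds around $\gamma_0$ $k$ times plus a short arc of $J$. In the universal cover its lift starts and ends on lifts of $B^u$ that are translates by the deck element $[\gamma^u]$. Since $B^u$ contains the periodic orbit $\gamma^s$, with $[\gamma^s]=-[\gamma^u]$ by Proposition \ref{prop: get perfect fits}, that deck element stabilizes the lift $\widetilde B^u$. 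So the transversal crosses the same leaf $\widetilde B^u$ twice, contradicting essentiality. A second route: the unstable leaf space is an $\R$-order tree, and the transversal would force a nontrivial deck element both to fix $\widetilde B^u$ and to translate along a line through it. I expect this identification of deck elements, showing the return element is exactly the one fixing $\widetilde B^u$, to be the main obstacle. It uses the fence $\mathcal F$ to guarantee that $B^u$ stays within the controlled strip, so its return is homotopic to $\gamma^u$ rather than some other loop.
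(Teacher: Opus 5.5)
\textbf{There is a genuine gap in your non-fixed case.}

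Your fixed-point case is the paper's argument: a fixed point $x'>x^u$ of $F^u$ contradicts the maximality of $x^u$. The parenthetical exception is unnecessary, and the orientation-based ``second route'' does not actually yield a contradiction. Your closed transversal in the non-fixed case is also fine: the $F^u$-route $\beta\subset B^u$ from $x'$ to $x''=F^u(x')$, closed up by the arc of $J$ and tilted.

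The gap is the step ``since $[\gamma^s]=-[\gamma^u]$, that deck element stabilizes $\widetilde B^u$.'' Let $g$ be the deck transformation carrying $\widetilde J$ to the lift of $J$ on which the lifted route ends; $g$ is the class of your transversal. The lifted route from $\widetilde x'$ ends at $g\widetilde z\in\widetilde B^u$, with $\widetilde z\in\widetilde J$ over $x''$. So $g^{-1}\widetilde B^u$ crosses $\widetilde J$ at $\widetilde z\neq\widetilde x'$. Thus ``$g$ stabilizes $\widetilde B^u$'' is equivalent to ``$\widetilde B^u$ crosses $\widetilde J$ at both $\widetilde x'$ and $\widetilde z$''. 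It is the contradiction itself, and the whole content of the non-fixed case lies in it.

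Anti-homotopy only gives that $g$ is conjugate into $\mathrm{Stab}(\widetilde B^u)$, i.e.\ that $g$ stabilizes some translate $c\widetilde B^u$. Since stabilizers are closed under inverses, the sign in $[\gamma^s]=-[\gamma^u]$ never enters your argument. The inference really does fail. Take the suspension of a hyperbolic toral automorphism, with expanding eigenvalue $\lambda>1$, and the weak stable leaf $W$ of the orbit $\gamma$ through the fixed point. $W$ is dense, so it crosses a short unstable transversal $J$ through $\gamma$ at points $x'$ above $\gamma\cap J$. The return along $\gamma^{-1}$ is $u\mapsto\lambda^{-1}u$, so $x'$ is not fixed and your closed transversal exists. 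The leaf through $x'$ is an annulus whose core is freely homotopic to the return loop up to orientation. Yet nothing is contradicted, because the return element stabilizes the lift of $W$ through the lift of $\gamma\cap J$, not the one through $\widetilde x'$.

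Neither of your proposed remedies touches this.
\begin{itemize}
\item The fence controls the free homotopy class of the return. That class is already $[\gamma^u]$, by sliding along $J$ from $x^u$ to $x'$; what is missing is which lift is stabilized.
\item The order-tree route fails as well. $g$ already fixes $\widetilde A^u$, since the lifted route from $\widetilde x^u$ ends at $g\widetilde x^u\in\widetilde A^u$. The lift of your transversal is a $g$-invariant line through the leaves $g^n\widetilde B^u$, which accumulate on $\widetilde A^u$ as $n\to-\infty$. This is just like $x\mapsto\lambda x$ fixing $0$ and translating along $(0,\infty)$, so fixing a leaf and translating along a line are compatible.
\end{itemize}

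For comparison, the paper splices the core loop $c_1$ of $B^u$ into the curve and asserts that the pushoff is a nullhomotopic transversal. But a closed curve made of a leafwise path in $B^u$ from $x'$ to $x''$ together with the arc of $J$ is nullhomotopic only if the based class of $\beta$ followed by the $J$-arc lies in the image of $\pi_1(B^u,x')$. That is the same identification. So the obstacle you flagged is exactly where the argument carries its weight. Closing it needs input that uses the anti-homotopy beyond conjugacy, not the fence. One candidate is the lozenge structure between anti-homotopic orbits invoked in Case 3.
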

\begin{proof}
    If $x'$ is a fixed point of $F^u$, then $x^u$ is no longer the maximal fixed point. This is a contradiction to our assumption.

    Otherwise, if $x'$ is not a fixed point, we have $F(x')=x''$. Consider a path made up of four segments in the same unstable leaf: the loop $c_1$ representing the closed orbit, a path from a point $p\in c_1$ to $x'$ described by $F^u$, the path from $x'$ to $x''$, and a path from $x''$ to $p$. A pushoff of this closed curve, as pictured in Figure \ref{fig: transverse curve}, is a closed nullhomotopic transversal to the unstable lamination, which contradicts $\Lam^u$ being an essential lamination.
\end{proof}
\begin{figure}
    \centering
    \includegraphics[width=0.3\linewidth]{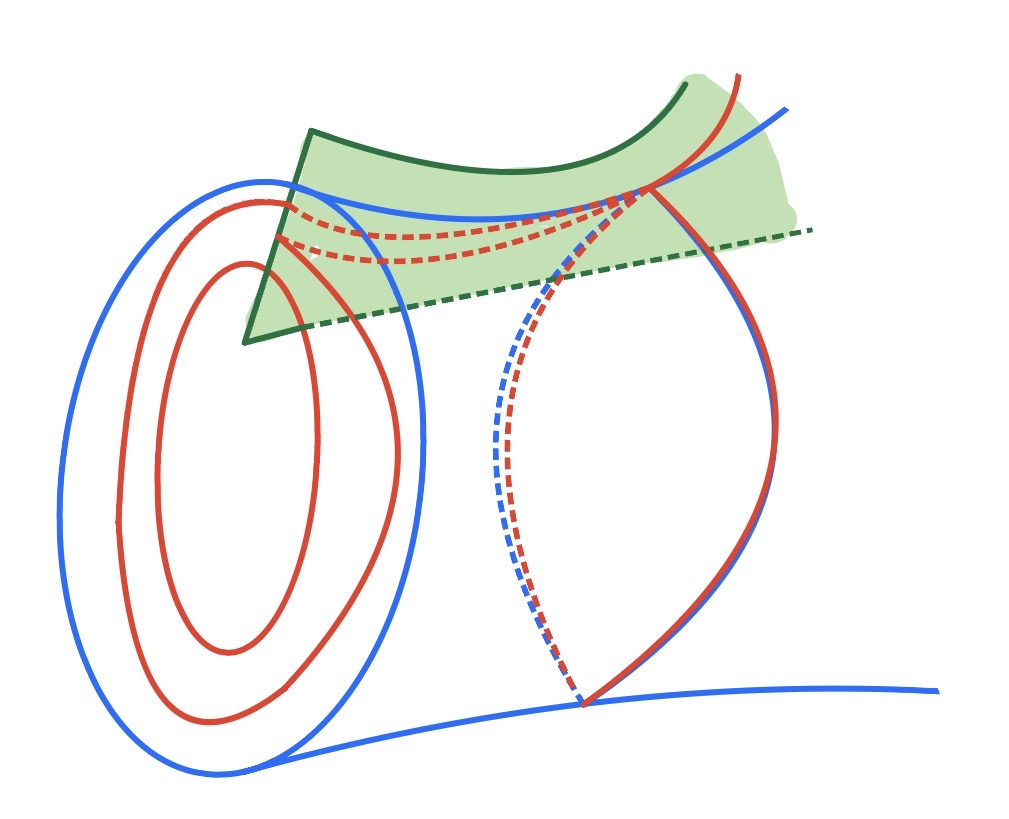}
    \caption{A closed curve that can be pushed off to a closed nullhomotopic transversal to an essential lamination $\Lam^u$.}
    \label{fig: transverse curve}
\end{figure}

\begin{figure}
    \centering
    \includegraphics[width=0.6\linewidth]{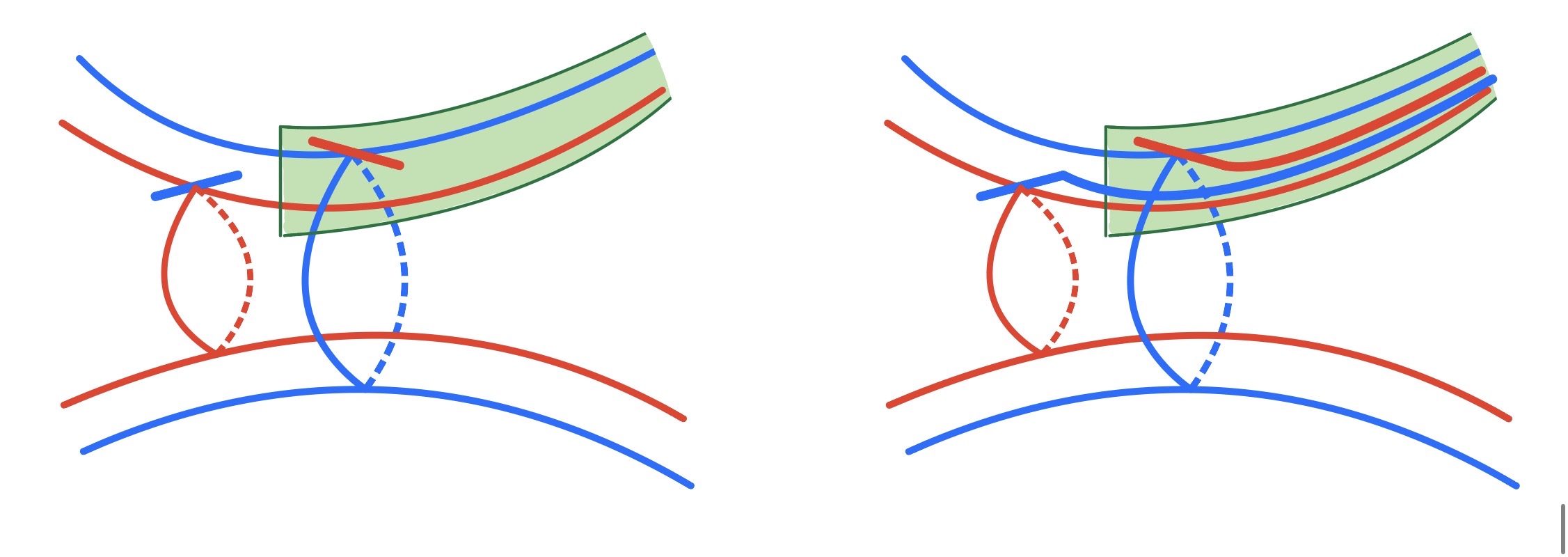}
    \caption{Case 2: both dual leaves are trapped in the fence.}
    \label{fig: swath case 2}
\end{figure}

\subsubsection*{Case 2} Suppose that $B^u$ crosses $A^s$ in the orientation as pictured in Figure \ref{fig: swath case 2}. Again, $B^u$ cannot cross $A^u$, and $B^s$ cannot cross $A^s$, so both dual leaves are trapped in the green fence. However, from Proposition \ref{prop: common fence}, the fence has width tending to $0$, which means that unstable leaves in the fence limit onto $A^u$ traveling to the end of $A^u$. However, in the end, $B^s$ lies between $A^u, B^u$, and $B^u$ would need to cross $B^s$ to limit onto $A^u$, which is impossible.

\subsubsection*{Case 3} Suppose the $B^u$ crosses $A^s$ as pictured in Figure \ref{fig: swath contradiction case 3}. In particular, the two transverse fences $J_1, J_2$ to the leaves $A^s, A^u$ satisfies that $J_1$ meets an end of $B^s$ and $J_2$ meets an end of $B^u$. Recall that $\gamma^s\subset A^s, \gamma^u\subset A^u$ are anti-homotopic closed orbits. By works of Fenley in the Anosov setting \cite[Theorem 3.3]{Fenley1995} and pseudo-Anosov setting \cite[Theorem 4.8]{Fenley1999}, lifts of a pair of anti-homotopic orbits to the universal cover are connected by a finite chain of lozenges. 
\begin{figure}
    \centering
    \includegraphics[width=0.5\linewidth]{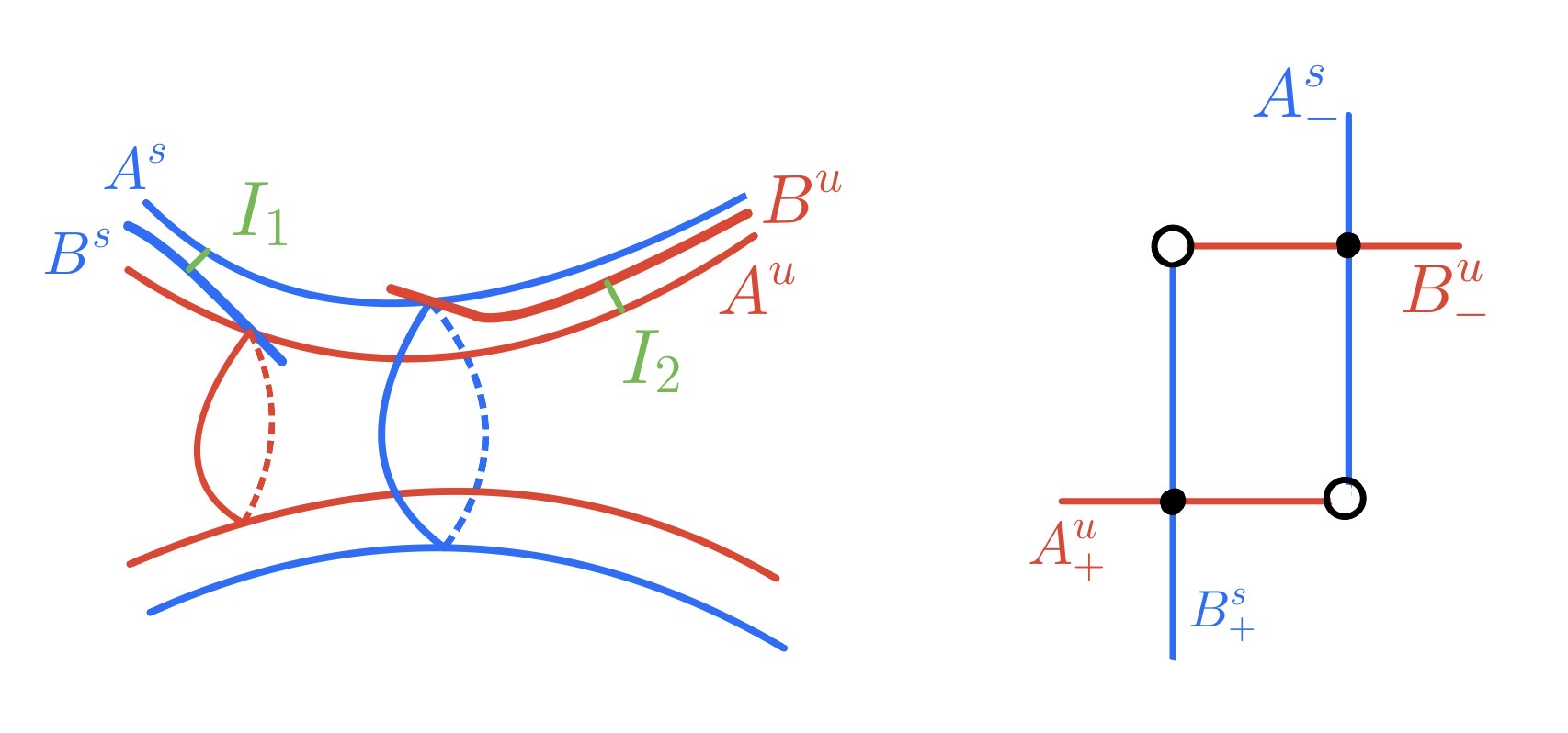}
    \caption{Case 3. Left: two transversals $J_1, J_2$ in green connecting $A^u, B^u$ that intersects neither $A^s, B^s$ nor the homotopy annulus $A^H$. Right: A standard picture of a lozenge for when the chain of lozenges is only one lozenge, which shows that the left picture is a contradiction.}
    \label{fig: swath contradiction case 3}
\end{figure}

We discuss two cases, one in which the chain of lozenges satisfies the property that consecutive lozenges only share a vertex, and the other in which there exist a sub-chain that is either $s$-scalloped or $u$-scalloped. In each case, we arrive at a contradiction.

\begin{lemma}
    Let $\mathcal{L}_1, \dots, \mathcal{L}_n$ be a chain of lozenges joining the periodic orbit $\gamma^s = A^s\cap B^u$ to $\gamma^u = B^s\cap A^u$. And suppose $\mathcal{L}_i$ meets $\mathcal{L}_{i+1}$ only at a corner. Such arrangement of lozenges is not compatible with the topology described in Case 3.
\end{lemma}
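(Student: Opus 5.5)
We will work in the orbit space $\mathcal{O}\cong\R^2$ of the lifted flow, where the lifted stable and unstable foliations become a pair of transverse (singular) foliations $\mathcal{O}^s,\mathcal{O}^u$. Lift $\gamma^s$ and $\gamma^u$ to orbits $\widetilde{\gamma}^s,\widetilde{\gamma}^u$, which appear as points $a,b\in\mathcal{O}$. Both points are fixed by the same deck transformation $g$ representing $[\gamma^s]=-[\gamma^u]$. By \cite[Theorem 3.3]{Fenley1995} and \cite[Theorem 4.8]{Fenley1999}, $a$ and $b$ are joined by a chain $\mathcal{L}_1,\dots,\mathcal{L}_n$ of $g$-invariant lozenges with corners $a=c_0,c_1,\dots,c_n=b$. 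Since consecutive lozenges share only a corner, the chain is a ``string of beads''. Each corner $c_j$ is joined to $c_{j+1}$ by a stable half-leaf of one and an unstable half-leaf of the other, and these half-leaves make perfect fits on the two sides of $\mathcal{L}_{j+1}$. The leaves through $a$ are the lifts of $A^s$ and $B^u$, and the leaves through $b$ are the lifts of $B^s$ and $A^u$.

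The main step is to locate the transversals $J_1,J_2$ of Case 3 in this picture. A transversal from $A^u$ to $B^u$ avoiding $A^s$, $B^s$ and the homotopy annulus $A^H$ lifts to a path in $\mathcal{O}$ from $\widetilde{A}^u$ to $\widetilde{B}^u$. This path crosses only unstable leaves and stays inside a single complementary component of $\widetilde{A}^s\cup\widetilde{B}^s\cup\widetilde{A}^H$. Projected to $\mathcal{O}$, the lifted annulus $\widetilde{A}^H$ is a $g$-invariant arc from $a$ to $b$ that can be isotoped to run through the corners $c_j$ inside the lozenges. I will argue that the union of this arc with the stable half-leaves through $a$ and $b$ separates $\mathcal{O}$ into sides determined by the quadrants at $a$ and $b$.

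For the single lozenge $n=1$, $\widetilde{A}^u$ and $\widetilde{B}^u$ are the two unstable sides of the lozenge's boundary configuration. Any path between them that crosses no stable leaf through $a$ or $b$ must pass through the interior of the lozenge, and hence meets the diagonal $\widetilde{A}^H$. Alternatively, it must go around a corner, and then it meets $\widetilde{A}^s$ or $\widetilde{B}^s$. Either way this is a contradiction, matching the right side of Figure \ref{fig: swath contradiction case 3}.

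For general $n$, induct along the chain. The condition that $J_1$ meets an end of $B^s$ and $J_2$ meets an end of $B^u$ fixes which quadrants at $a$ and $b$ the transversals enter. Corners meeting only at a vertex force the quadrant at $c_j$ used by $\mathcal{L}_j$ to be opposite the quadrant used by $\mathcal{L}_{j+1}$. Tracking the $g$-equivariant quadrant parity from $a$ to $b$ then shows that the end of $\widetilde{B}^u$ reached by $J_2$ lies on the far side of $\widetilde{A}^H\cup\widetilde{B}^s$ from $\widetilde{A}^u$. So $J_2$ would have to cross one of them, a contradiction.

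I expect the main obstacle to be this separation and parity bookkeeping. It requires making precise how the configuration in Figure \ref{fig: swath contradiction case 3}, which is described in $M$ near the annular leaves and the fence $\mathcal{F}$, translates into specific quadrants of $a$ and $b$ in $\mathcal{O}$. It also requires handling singular corners, where a $p$-prong replaces a quadrant by a sector. To deal with singular corners, I would first show that only the two sectors adjacent to the chain matter, which reduces the argument to the nonsingular case.
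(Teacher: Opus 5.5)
Your base case breaks at the one step that carries the contradiction: ``must pass through the interior of the lozenge, and hence meets the diagonal $\widetilde{A}^H$.'' The first half is fine: a path from $\widetilde{A}^u$ to $\widetilde{B}^u$ avoiding $\widetilde{A}^s\cup\widetilde{B}^s$ must cross $\mathcal{L}_1$ from one unstable side to the other. The second half fails for two separate reasons.

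First, there is no $g$-invariant arc from $a$ to $b$ through the interior of a lozenge. Use the product coordinates given by the two sides at $a$. In these coordinates $g$ contracts one coordinate and expands the other, so it has no interior fixed points, and $g^{\pm n}$ pushes every interior point out to the two ideal (perfect-fit) corners. On an invariant arc, $g$ would act as a monotone homeomorphism of an interval. The forward orbit of an interior point would then converge to $a$, to $b$, or to an interior fixed point, and each is impossible.

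Second, and more seriously, the Case 3 hypothesis that $J_2$ misses $A^H$ is a statement in $M$, and projecting along flow lines destroys it. A path disjoint from the strip $\widetilde{A}^H$ in $\widetilde{M}$ can still cross the image of $\widetilde{A}^H$ in $\mathcal{O}$. Worse, $A^H$ is only defined up to homotopy rel $\gamma^s\cup\gamma^u$. A finger move pushes it off any arc with free endpoints, so ``missing $A^H$'' carries no information.

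Concretely, take a stable segment crossing $\mathcal{L}_1$ in a leaf that is not a lift of $A^s$ or $B^s$. It is the image of a path in $\widetilde{M}$ that is transverse to the lifted unstable foliation, runs from $\widetilde{A}^u$ to $\widetilde{B}^u$, and meets no lift of $A^s$ or $B^s$. So the lozenge by itself does not obstruct $J_2$. Your inductive step again concludes by separation via $\widetilde{A}^H\cup\widetilde{B}^s$, so it has the same defect.

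What is missing is the orientation data that the paper uses in place of any separation argument.
\begin{enumerate}
\item Pass to a double cover so that $\Lam^s$ and $\Lam^u$ are co-oriented, and label the two ends of each of $A^s,A^u,B^s,B^u$ by signs.
\item The fence transversals preserve co-orientation. So $J_1$ must join an end of $A^s$ to a like-signed end of $B^s$, and $J_2$ must join an end of $A^u$ to a like-signed end of $B^u$.
\item In a single lozenge, the cyclic order of the four ends around $a$ and $b$ makes these two requirements incompatible for every labelling.
\item When consecutive lozenges meet only at corners, this cyclic order is preserved along the chain, which reduces the general case to $n=1$.
\end{enumerate}
This argument uses $J_1$ and $J_2$ jointly and never mentions $A^H$.
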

\begin{proof}
    Consider the 4 half-leaves represented by the four ends of the chain of lozenges, one end of each of $A^s, A^u, B^s, B^u$. In one lozenge, there is a cyclic ordering of the ends of the four leaves. When a chain of lozenges only intersect consecutive ones at corners, the cyclic ordering is preserved. We consider the case with one lozenge, and the argument extends to arbitrarily long chains.

    Consider also that the common transverse fence $J_1, J_2$ to either end of $A^s, A^u$ implies that there is a transversal $I_1$ to $\Lam^s$ identifying the end of $A^s$ intersecting $J_1$ with an end of $B^s$ also intersecting $J_1$. Similarly, there is a transversal $I_2$ identifying an end of $A^u$ to an end of $B^u$. In Figure \ref{fig: swath contradiction case 3}, $I_1$ is transverse to stable leaves to the left end, and $I_2$ is transverse to unstable leaves to the right end, so that $I_i$ a subset of a fiber of $J_i$. Choose labels for these ends, for example as in Figure \ref{fig: swath contradiction case 3}. Note that up to taking a double cover, the transversal $J_i$ preserves co-orientation, which implies that in the lozenge, the left end of $A^u$ should have the same sign as the left end of $B^u$. The same is true for $A^s, B^s$.

    Begin by examining the left end of $B^s$. Suppose it's the positive end $B^s_+$, then the left end of $A^s$ is also positive because $J_1$ identifies $A^s_+$ with $B^s_+$. The right end of $B^u$ is counterclockwise from $A^s_+$, and the right end of $A^u$ is clockwise from $B^s_+$. The right ends of $B^u$ and $A^u$ are identified by $J_2$. We see that such requirement is impossible to reconcile for any choice of labels for the right ends of $A^u, B^u$, and there are no valid labels for the ends compatible with the topology described by Case 3 if the lozenge structure is a chain with consecutive lozenges intersecting only at a corner.c
\end{proof}

\begin{figure}
    \centering
    \includegraphics[width=0.3\linewidth]{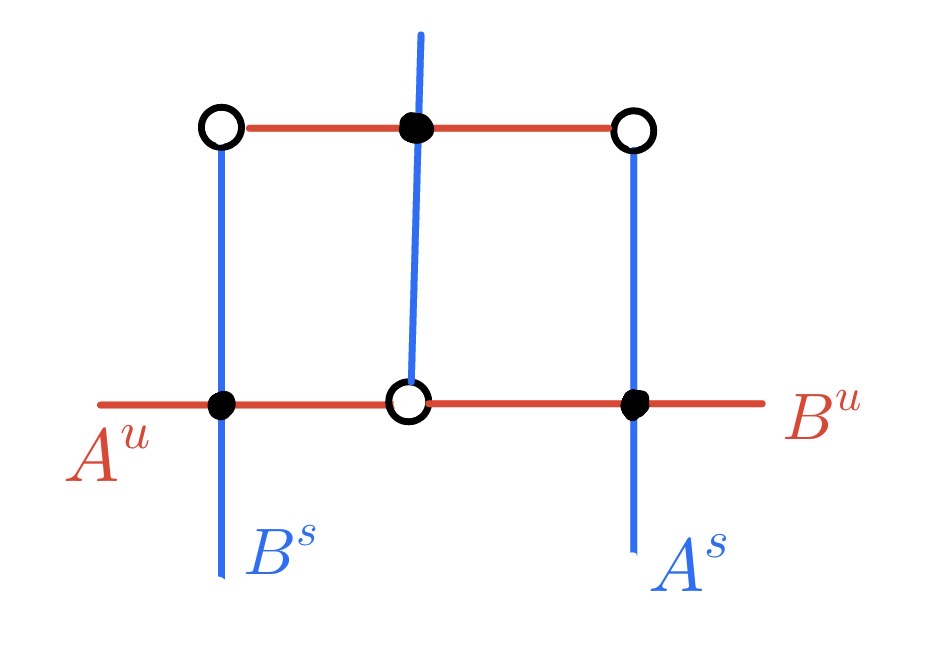}
    \caption{Two consecutive lozenges sharing a stable leaf.}
    \label{fig: adjacent lozenges}
\end{figure}

\begin{lemma}
    Let $\mathcal{L}_1, \dots, \mathcal{L}_n$ be a chain of lozenges joining the periodic orbit $\gamma^s = A^s\cap B^u$ to $\gamma^u = B^s\cap A^u$. And suppose there is some $i$ for which $\mathcal{L}_i$ meets $\mathcal{L}_{i+1}$ at a stable leaf. Such arrangement of lozenges is not compatible with the topology described in Case 3.
\end{lemma}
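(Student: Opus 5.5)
I will reduce to a scalloped configuration and show that it contradicts either the hyperbolicity of $M$ or the Case 3 topology. Suppose $\mathcal{L}_i$ and $\mathcal{L}_{i+1}$ share a stable side. Then they lie in a maximal sub-chain $\mathcal{L}_j,\dots,\mathcal{L}_{j+m}$ of consecutive lozenges, each adjacent to the next along a stable leaf. I expect to arrange that all of these lozenges share one common stable side, so that the union is a $u$-scalloped region. If this fails, the chain zig-zags: the shared stable sides differ from step to step, with corners on alternating sides. In that case I would pass to the sub-chain of lozenges that all abut one stable half-leaf. This gives a $u$-scalloped (or, symmetrically, $s$-scalloped) configuration in the universal cover.

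\textbf{Step 1: rule out the scalloped region using hyperbolicity.} The endpoints $\gamma^s$ and $\gamma^u$ are periodic orbits, and the chain connects the corresponding lifts, which are invariant under a common deck transformation $g$ (up to the anti-homotopy orientation reversal). The structure theory of chains of lozenges between freely homotopic orbits then says two things. First, $g$ preserves the chain. Second, a scalloped sub-chain is preserved by $g$ or by a power of $g$, together with the stabilizer of the common stable leaf. By Barthelm\'e--Fenley, a $u$-scalloped region whose stabilizer contains $\mathbb{Z}^2$ produces an embedded $\pi_1$-injective torus or Klein bottle. That is impossible in a hyperbolic $M$; this is the same input the introduction quotes for applying Barthelm\'e--Frankel--Mann. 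So a $\mathbb{Z}^2$ stabilizer is excluded, and what remains is a finite scalloped piece whose stabilizer contains only the powers of $g$. The main difficulty of the proof lies in this remaining case.

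\textbf{Step 2: the finite case and the cyclic-order argument.} For the remaining finite scalloped piece I would rerun the end-labelling argument of the previous lemma. The argument there tracks how the ends of $A^s, A^u, B^s, B^u$ are cyclically ordered around a corner. Here I would track this order across an adjacency along a stable side, where the two lozenges share the stable half-leaf $\ell$. Crossing $\ell$ keeps the stable ends on the same side of $\ell$, but it reverses the relative order of the unstable ends; this reversal is what distinguishes the scalloped chain from a chain of corner-adjacent lozenges. Next I would use the transversal $I_1$, which joins the left end of $A^s$ to an end of $B^s$, and the transversal $I_2$, which joins the right end of $A^u$ to an end of $B^u$. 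As before, I pass to the double cover so that these transversals preserve co-orientation. The transversal $I_1$ forces $A^s$ and $B^s$ to be nested on the same side of $\ell$. But the common stable leaf $\ell$ separates $A^s$ from $B^s$, since they are corners of distinct lozenges on opposite sides of the scalloped region. So $I_1$ would have to cross $\ell$. The fence $\mathcal{F}$ of Proposition \ref{prop: common fence} traps every stable leaf between $A^s$ and $J_1$, so such a crossing would put $\ell$ inside the fence. Because the fence width tends to $0$, $\ell$ would then limit onto $A^s$ in the same direction as the fence. At that point I would argue as in Case 2: a nonseparated leaf cannot accumulate onto the end of $A^s$ without crossing $B^s$.

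\textbf{Step 3: combine.} Finally I would combine the two cases. Corner-only adjacencies preserve the cyclic order, by the previous lemma. Each stable-side adjacency forces $I_1$ to cross a separating leaf, which is impossible by Step 2. The $s$-scalloped case is symmetric, with $I_2$ and unstable leaves in place of $I_1$ and stable leaves. So no chain of lozenges realizes the Case 3 configuration. The step I expect to be hardest is showing that the common stable side $\ell$ genuinely separates the relevant ends inside the fence. If that fails, my fallback is to use the maximal-fixed-point choice of $x^s$ and derive a contradiction as in Case 1.
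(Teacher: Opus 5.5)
There is a genuine gap: nothing in your proposal actually produces a contradiction.

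\textbf{The scalloped reduction and Step 1.} The reduction to a scalloped region is not sound. A stable half-leaf is a side of at most two lozenges, one on each side of it, so a sub-chain cannot have ``one common stable side''. Two lozenges adjacent along $\ell$ do not form a scalloped region. As a result Step 1 does no work. Hyperbolicity rules out $\mathbb{Z}^2$-invariant scalloped regions, but the lemma only assumes a single pair of lozenges sharing a side, and your ``remaining finite scalloped piece'' is just that original situation.

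\textbf{Step 2.} The asserted contradiction is that $I_1$ must cross the stable leaf containing $\ell$. But $I_1$ is a transversal to $\Lambda^s$, and crossing stable leaves is exactly what it does. In a chain of lozenges, the stable leaf through an intermediate corner typically does separate the stable leaves at the two ends, so this crossing is expected rather than contradictory. The rescue through the fence is not an argument either. The statement that the width of $\mathcal{F}_i$ tends to $0$ concerns $i\to\infty$, not a fixed configuration. Your final claim that ``a nonseparated leaf cannot accumulate onto the end of $A^s$ without crossing $B^s$'' never says which leaves are nonseparated, or why that accumulation is impossible. The Case 1 fallback is not a plan.

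\textbf{The missing idea.} Adjacency along a \emph{stable} side produces non-separation among \emph{unstable} leaves. So the obstruction must come from the unstable transversal, not from $I_1$; you have the pairing reversed. Suppose $\mathcal{L}_i$ and $\mathcal{L}_{i+1}$ share the stable half-leaf $\ell$. The unstable sides of the two lozenges make perfect fits with $\ell$ from opposite sides. Both are limits of the unstable leaves crossing $\ell$ near its ideal end, so they are non-separated in the unstable leaf space of $\widetilde{M}$. A transversal to the lifted unstable lamination meets each leaf at most once, so it maps injectively into the leaf space. No injective arc joins two non-separated points.

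The paper's proof is exactly this. It observes that $J_2$ (through $I_2$) would be a co-orientation-preserving transversal between these unstable ends of $A^u$ and $B^u$, which is impossible. The case of a shared unstable side is symmetric, using $J_1$ and the stable leaves. No hyperbolicity or Barthelm\'e--Fenley input is needed for this lemma.
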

\begin{proof}
    There is a more fundamental obstruction in this case. In the two lozenges that share a stable leaf, the two unstable ends are non-separated, as shown in Figure \ref{fig: adjacent lozenges}. There exists no transverse path from one to the other preserving co-orientation. However, $J_2$ gives us such a path, which is a contradiction. The proof is analogous when lozenges share an unstable leaf.
\end{proof}

We have now proven that each of the cases for how $B^u, B^s$ could intersect $A^s, A^u$ has resulted in a contradiction with the topology imposed by the flow. We now conclude Theorem \ref{noswaths}, as desired.

\section{Corridor Reduction to Local Lemma}\label{section: corridor to local}

In this section, we prove that there is a sequence of isotopies of the stable and unstable Kneser branched surfaces after a uniformly bounded amount of splitting such that the parallel locus of any two leaves contain no radius $R$ is a union of connected components each covered by a radius $R$ disk.

The strategy is as follows: find a finite collection of smooth embedded arcs in the stable Kneser branched surface $\Sigma^s$ that partition every parallel locus between two leaves into components of radius less than $R$. Then, in a neighborhood of each arc, isotope the stable and unstable laminations to be transverse. This effectively deletes these neighborhoods from the parallel locus.

First, we show that doing a uniformly bounded amount of splitting will promote having no radius $R$ disks in the leaf-wise $R$-parallel locus to having no radius $R$ disks in the $R$-shadow.


\begin{prop}\label{prop: no radius R disks in shadow}
    Let $\Sigma^s, \Sigma^u$ be stable and unstable Kneser branched surfaces carrying the stable and unstable laminations $\Lam^s, \Lam^u$ of a pA flow $\phi$.  After a distance $2R$ splitting of $\Sigma^u$ to some new branched surface $\Sigma^u_1$, the $R$-shadow $P_R(\widetilde{L}^s, \cSigu_1;\cSigs)$ of $\cSigu_1$ on any leaf $\widetilde{L}^s$ does not contain any radius-$R$ disks. In other words, after splitting, the complement of $P_R(\widetilde{L}^s, \cSigu_1; \cSigs)$ in any leaf $\widetilde{L}^s$ is $R$-dense.
\end{prop}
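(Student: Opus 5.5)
The plan is to argue by contradiction, combining Theorem \ref{noswaths} with the way a long splitting separates sheets of $\Sigma^u$. Theorem \ref{noswaths} gives a uniform $R$ such that every leafwise $R$-parallel locus $P_R(\widetilde{L}^s,\widetilde{L}^u;\cSigu)$ is empty. The shadow, however, is a union over all unstable leaves and all scales $r<R$. So a radius-$R$ disk $D\subset\widetilde{L}^s$ could lie in the shadow because it is covered by many small dual disks coming from \emph{different} unstable leaves. These leaves all pass through the same sheets of $\cSigu$ over $D$ but separate inside the fibered neighborhood. The splitting to distance $2R$ is designed to rule out this patchwork.

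First, I will use the fact that splitting $\Sigma^u$ to distance $2R$ (there are only finitely many such splittings, by the proposition above) produces $\Sigma^u_1$ with the following property. Two unstable leaves carried by $\Sigma^u_1$ that run through a common $I$-fiber of $N(\cSigu_1)$ remain in a common fiber along every leafwise path of length less than $2R$ starting there. If they diverged at a branch locus within distance $2R$, the splitting would already have separated them into different sectors. In other words, within a $2R$-ball, the unstable leaves sharing a fiber of $\Sigma^u_1$ pass through the same sequence of sectors and are fiberwise parallel.

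Second, suppose a radius-$R$ disk $D\subset\widetilde{L}^s$ lies in $P_R(\widetilde{L}^s,\cSigu_1;\cSigs)$. I would cover $D$ by the small dual disks $D^s_r\subset D$ with duals $D^u_r$ on leaves $\widetilde{L}^u_\alpha$. Using the first step, I would continue the dual disks across overlaps. Where two small parallel regions overlap, their unstable fibers coincide, so by the first step the corresponding unstable sheets are carried through the same sectors of $\cSigu_1$ throughout the $2R$-neighborhood. Therefore one unstable leaf, namely the one dual to the disk containing the center of $D$, has a local sheet over all of $D$. That sheet projects homeomorphically onto $D$ under $\pi_u$, because the fiber projections agree on overlaps and $D$ is simply connected. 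This yields a dual radius-$R$ disk, so the center of $D$ lies in $P_R(\widetilde{L}^s,\widetilde{L}^u;\cSigu_1)$. Because $\Sigma^u_1$ is a splitting of $\Sigma^u$, it carries the same laminations, and its $I$-fibers sit inside those of $\Sigma^u$. Hence the parallel locus with respect to $\Sigma^u_1$ is contained in the parallel locus with respect to $\Sigma^u$, and Theorem \ref{noswaths} gives a contradiction. Equivalently, the complement of the shadow is $R$-dense.

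The main obstacle is the patching in the second step. An overlap of two parallel regions only gives two unstable leaves meeting a common fiber at one point. Upgrading this to "same sectors on a whole $2R$-ball" requires the definition of a distance-$2R$ splitting, with the distance measured in $N(\Sigma)\setminus N(\Sigma')$, to really control leafwise divergence. I would first try to justify this by compactness. Given the finitely many sectors of $\Sigma^u$, leafwise distance and distance in the fibered neighborhood are uniformly comparable. If needed, I would enlarge the splitting distance by a uniform factor, which does not affect finiteness.
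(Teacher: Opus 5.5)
Your route is the paper's route. The paper also covers the disk by dual patches, glues overlapping patches by asserting that ``any unstable disk has an unambiguous continuation to radius $2R$ due to the splitting,'' and then contradicts Theorem~\ref{noswaths}. But that gluing step, in the form of your first step, is false, and the whole argument rests on it.

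Your justification, that a divergence within distance $2R$ would already have been split apart, applies to the branch locus of $\Sigma^u$. It does not apply to the branch locus of $\Sigma^u_1$, and that is exactly where fibers of $N(\cSigu_1)$ separate. Splitting only moves cusps back; it never removes branching. Take a point on the one-sheeted side of a branch arc of $\Sigma^u_1$, within distance $\epsilon$ of it. Its $\cSigu_1$-fiber contains leaves continuing into each of the two sheets. Two such leaves share a fiber of $N(\cSigu_1)$ but lie in different fibers after distance $\epsilon$, so enlarging the splitting distance does not help.

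The conclusion of your second step can also fail. A distance-$2R$ splitting only cuts each gap of $\Lamu$ to depth between $2R$ and $4R$ from the old cusps. So a gap running between two old cusps can leave a short uncut remainder, bounded by two cusps of $\Sigma^u_1$ less than $R$ apart. A disk carried by $\Sigma^u_1$ can cross this one-sheeted region, entering over one cusp in the sheet above the gap and leaving over the other in the sheet below it. It is covered by two overlapping patches, dual to a leaf above and a leaf below the gap. Yet every leaf arriving from the first sheet exits into the other sheet, so no single leaf is $\cSigu_1$-dual to the whole disk. The paper's proof relies on the same unproved sentence, so relative to it you have missed nothing, but the step needs a different argument.

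What the splitting does control is divergence measured in the fibers of the \emph{unsplit} $\Sigma^u$. You already note that fibers of $N(\Sigma^u_1)$ are subarcs of fibers of $N(\Sigma^u)$; use this at the start rather than at the end.
\begin{itemize}
\item Let $x$ be the center of $D$ and $\widetilde L^u$ a leaf dual to a patch through $x$. Along a path in $D$ from $x$, follow the subarc of the $\cSigu$-fiber joining the point of $D$ to $\widetilde L^u$. At $x$ this subarc lies in a single $\cSigu_1$-fiber.
\item Both $D$ and $\widetilde L^u$ stay in $N(\cSigu_1)$. So pieces of $N(\cSigu)\setminus N(\cSigu_1)$ can enter the subarc only across new vertical boundary.
\item $D$ and $\widetilde L^u$ land in different $\cSigu$-fibers only when such a piece reaches the old vertical boundary. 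Tracing that piece gives a path in $N(\Sigma^u)\setminus N(\Sigma^u_1)$ from new to old vertical boundary, whose length is at least the splitting distance.
\end{itemize}
Hence, after scaling the splitting distance by a uniform constant (this is where your comparability remark belongs), the single leaf $\widetilde L^u$ is dual to all of $D$ with respect to $\cSigu$. No patching is needed, and Theorem~\ref{noswaths} for the unsplit Kneser pair gives the contradiction.
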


\begin{proof}
    By Theorem \ref{noswaths}, the parallel locus $P(\widetilde{L}^s, \widetilde{L}^u;\cSigs, \cSigu)$ of any two leaves $\widetilde{L}^s, \widetilde{L}^u\subset \cLams, \cLamu$ with respect to $\cSigu$ contain no radius $R$ disks. We prove the contrapositive, that if the $R$-shadow $P_R(\widetilde{L}^s, \cSigu_1;\cSigs)$ contains a radius-$R$ disk $D^s$, then the $R$-parallel locus $P_R(\widetilde{L}^s, \widetilde{L}^u;\cSigs, \cSigu)$ is nonempty for some leaf $\widetilde{L}^u\subset \cSigu$.

    Taken an open cover of the radius-$R$ disk in the shadow and refine it to a finite cover $D^s\subset\bigcup_i\{D_i^s\}$, where $D^s_i\subset P_{R(i)}(\widetilde{L}^s, \widetilde{L}^u_i; \cSigs, \cSigu_1)$ is of radius $R(i)$. By definition of the parallel locus, we have a collection of disks $D^u_i\subset \widetilde{L}^u_i$ carried by the split branched surface $\cSigu_1$ so that $D_i^u$ is dual to $D_i^s$ for all $i$.

    Suppose open sets $X^s_1, X_2^s\subset \bigcup_i\{D_i^s\}\subset\widetilde{L}^s$ are dual to $X^u_1, X^u_2\subset \widetilde{L}^u_1, \widetilde{L}^u_2$, respectively, such that $X_1^s\cap X_2^s\neq\emptyset$, we show that $X^s_1$ has a continuation to a section of $\sat_s(X_2^s)\cup \sat_s(X_2^s)$ that is dual to some patch $X^u\subset \widetilde{L}^u_1$ that is a continuation of $X^1$.
    
    Consider the open set $Y^s = X_1^s\cap X_2^s$. It has two dual disks $Y_1^u\subset X^u_1, Y^u_2\subset X^u_2$ so that $Y^s$ is a section of both $\sat_u(Y_1^u)$ and $\sat_u(Y_2^u)$. This implies that $Y_1^u, Y_2^u$ are carried by the same unstable sector of $\cSigu_1$. Since any unstable disk has an unambiguous continuation to radius $2R$ due to the splitting, we can continue $X_1^u$ parallel to $X^u_2$ in the same unstable sector to a disk $Y^u$. Since the continuation is constructed to be carried by the same unstable sectors, the unstable saturations $\sat_u(X_1^u)\cup \sat_u(X^u_2) = \sat_u(Y^u)$. We conclude that $X_1^s\cup X_2^s$ is a section of $\sat_u(Y^u)$ because the continuation of $Y^s$ is dual to $X^u_2$.

    We have thus shown that a radius-$R$ disk in the shadow of the unstable branched surface split to distance $R$ implies that there is a leaf-wise parallel locus realizing that shadow. Since there are no leaf-wise parallel loci with radius-$R$ disks, there are also no shadows containing radius-$R$ disks.
\end{proof}

\begin{definition}
    A \textit{cutting arc} is a smooth embedded arc in the split stable branched surface $\Sigma^s_1$ with endpoints on the interior of sectors. A cutting arc is \textit{effective} for a leaf $\widetilde{L}^s$ if its endpoints are on interiors of sectors disjoint from the shadow of $\Sigma^u$ on the leaf $L^s$.
\end{definition}

\begin{lemma}\label{lemma: every leaf has cutting arc system}
    For each leaf $\widetilde{L}^s$, there is a locally finite graph $G_{L^s}$ of effective cutting arcs in $\widetilde{L}^s$ each of length at most $2R$ so that its complement in the shadow $P(\widetilde{L}^s, \cSigu_1)\setminus G_{L^s}$ consists of topological disks of diameter at most $2R$. The graph can be chosen to be equivariant with respect to deck transformations.
\end{lemma}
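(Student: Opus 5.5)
The plan is to build the graph from the $R$-density of the complement of the shadow given by Proposition \ref{prop: no radius R disks in shadow}, using a net-and-Voronoi construction in the leaf. Fix a leaf $\widetilde{L}^s$ with its induced path metric; it is a plane of bounded geometry, being a lift of a leaf carried by the compact branched surface $\Sigma^s_1$. Let $C=\widetilde{L}^s\setminus P_R(\widetilde{L}^s,\cSigu_1;\cSigs)$. By Proposition \ref{prop: no radius R disks in shadow}, every leafwise ball of radius $R$ meets $C$. Moreover, $C$ is open in a uniform sense: away from the shadow the stable fiber is either disjoint from $\Sigma^u_1$ or coincides with a block of unstable fibers, and this condition is open. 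So after shrinking $R$ slightly, every $R$-ball contains a point of $C$ lying in the interior of a sector of $\Sigma^s_1$ (we perturb off the branch locus, which is nowhere dense in the leaf). These will be the endpoints of the effective cutting arcs.

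First, choose a maximal $R$-separated set $V\subset C\cap(\text{sector interiors})$. To make it equivariant, carry out the choice downstairs on $\Sigma^s_1$ and lift. One way is to fix finitely many marker points in sector interiors of $\Sigma^s_1$ and take their preimages, thinned to a net. A cleaner way is to work on the compact space of pairs (point of $\Lam^s$, local leaf) and pull back. By maximality together with $R$-density of $C$, every point of $\widetilde{L}^s$ lies within $2R$ of $V$. Second, form the Delaunay-type graph: join $v,w\in V$ by a leafwise geodesic (then smoothed and put in general position with respect to the branch locus) whenever their Voronoi cells are adjacent. Adjacent centers lie within distance $2R$, so each edge has length at most $2R$ (up to adjusting constants, e.g.\ taking $V$ to be $R/2$-separated inside a $R/2$-dense set). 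Local finiteness follows from the separation of $V$ and the bounded geometry of the leaf. Each edge has both endpoints in $C$ and in sector interiors, so it is an effective cutting arc.

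Third, show that the complementary regions are disks of diameter at most $2R$. The leaf is a plane, and the Delaunay graph of a uniformly dense, separated net in a planar surface of bounded geometry is connected, with faces that are polygons dual to Voronoi vertices. Each face therefore lies in the union of a few Voronoi cells meeting at a point, so its diameter is bounded by a constant multiple of $R$. We then rescale $R$ (replacing the splitting distance by a fixed multiple, which the earlier results allow since $R$ is an arbitrary uniform constant) to obtain the stated bound $2R$. Intersecting with the shadow, $P(\widetilde{L}^s,\cSigu_1)\setminus G_{L^s}$ is a union of pieces of these faces; we take its components to be the disks, or we note that any such component sits inside one face. Equivariance follows because every choice was made on the quotient.

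The main obstacle is the third step. Leaves are only bounded-geometry planes in the path metric, and geodesics between net points may cross one another or run along the branch locus. So I would first try a combinatorial version: triangulate $\Sigma^s_1$ finely, replace Voronoi cells by the induced cellulation of the leaf, and take as arcs dual edges between cells that contain points of $C$. Planarity and uniform cell size then give the disk and diameter bounds directly, without any geodesic analysis.
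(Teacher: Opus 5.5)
Your proposal takes essentially the same approach as the paper. Both get $R$-density of the clean locus from Proposition \ref{prop: no radius R disks in shadow}, take a maximal $R$-separated set of clean points as the vertices of a locally finite cellulation of $\widetilde{L}^s$ (so every edge is an effective cutting arc), and get equivariance from the invariance of the shadow under the stabilizer of the leaf. Your Delaunay construction, the perturbation of vertices into sector interiors (which calls for slightly enlarging $R$, not shrinking it), the rescaling of constants, and your remark that components of $P(\widetilde{L}^s,\cSigu_1)\setminus G_{L^s}$ are only guaranteed to lie inside disk faces are details the paper's brief proof leaves implicit; the face version is what Proposition \ref{prop: unifromly finite mother graph of all cuts} actually uses.
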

\begin{proof}
    This follows from Proposition \ref{prop: no radius R disks in shadow}, where we conclude that the shadow $P(\widetilde{L}^s, \cSigu_1)$ contains no disks of radius $R$. The clean locus is therefore $R$-dense. Choose a maximal set of clean points for which pairwise distances are at least $R$, and choose a locally finite cellulation of $\widetilde
    \Sigma^s$. The edges of the cellulation are by construction effective cutting arcs. The resulting components of $P(\widetilde{L}^s, \cSigu_1)\setminus G_{L^s}$ are disks of uniformly bounded diamter. Since the shadow is invariant under deck transformations that lie in the stabilizer of the stable leaf, we can choose cutting arcs to be equivariant. See Figure \ref{fig: cutting arcs} for an illustration.
\end{proof}

The next step is to find a uniformly finite family $\mathcal{C}$ of cuts on the stable branched surface $\Sigma^s$ so that for each lifted leaf $\widetilde{L}^s$, there is a subset $\mathcal{C}_{L^s}\subset \mathcal{C}$ of cuts effective in cutting the shadow $P(\widetilde{L}^s, \cSigu)$ into disks.

\begin{figure}
    \centering
    \includegraphics[width=0.5\linewidth]{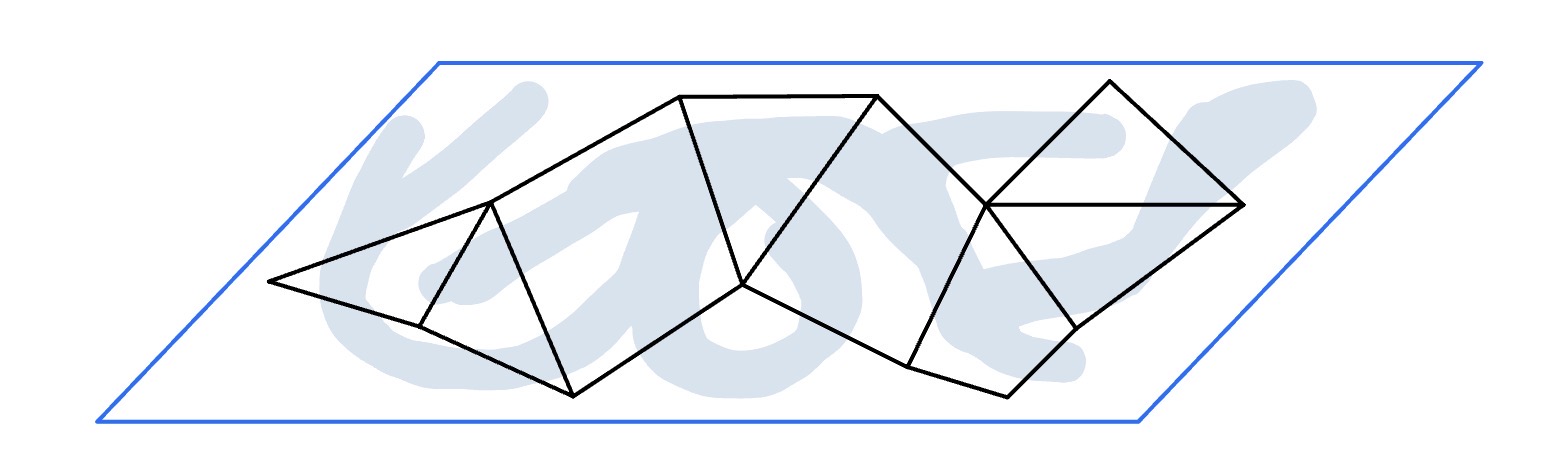}
    \caption{An illustration of black cutting arcs on a fundamental domain of a stable leaf $\widetilde{L}^u$, where the blue shaded areas represent the shadow $P(\widetilde{L}^s, \cSigu_1)$. The cutting arcs decompose the shadow into disks of bounded radius.}
    \label{fig: cutting arcs}
\end{figure}

\begin{definition}
    A point in a stable leaf $x\in L^s$ is \textit{clean} with respect to an unstable branched surface $\Sigma^u$ if it is disjoint from the parallel locus $P(\widetilde{L}^s, \cSigu)$.
    A point the stable branched surface $x\in\Sigma^s$ is \textit{clean} if in the universal cover, for every lift $\widetilde{x}$, every point in the stable fiber $p\in I_{\widetilde{x}}^s$ is disjoint from the parallel locus $P_R(\widetilde{L}^s(p), \cSigu;\cSigs)$, where $L^s(p)\subset\Lam^s$ is the stable leaf containing $p$.
\end{definition}


\begin{prop}\label{prop: uniformly finite clean enpoints}
    Let $\Sigma^s, \Sigma^u_1$ be fixed essential stable and unstable branched surfaces carrying the stable and unstable laminations $\Lambda^s, \Lambda^u$ normal to the fixed triangulation $\tau$. Assume that for every stable leaf $L^s\subset\Lambda^s$, the shadow $P(\widetilde{L}^s, \cSigu_1)$ contains no leaf-wise radius-$R$ disks. Then for every stable sector $S\subset \Sigma^s$, there exists a finite set 
    \[V_S = \{v_1, v_2,\dots, v_{N_S}\}\subset S\]
    depending only on $\Sigma^s, \Sigma^u_1, \tau, R, S$, so that the following property holds: For a stable leaf $L^s\subset\Lambda^s$ carried by $\Sigma^s$, if the system of cutting arcs given by Lemma \ref{lemma: every leaf has cutting arc system} has a clean endpoint in $S$, then there exists $v_i\in V_S$ such that the corresponding point in the leaf $x_i=L^s\cap I_{v_i}^s$ is clean in $L^s$. In other words, $x_i\in L^s\setminus P(\widetilde{L}^s, \cSigu_1)$. In particular, $V_S$ does not depend on the lamination $\Lam^s$ carried by $\Sigma^s$.
\end{prop}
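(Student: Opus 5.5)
The plan is a compactness argument on the sector $S$: show that the property ``the point of $L^s$ over $v$ is clean'' is stable under small perturbation of $v$ in a quantitative way depending only on $\Sigma^s,\Sigma^u_1,\tau,R$, and then cover $S$ by finitely many such neighborhoods.

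\textbf{Step 1: a uniform clean collar.} Given a leaf $L^s$ carried by $\Sigma^s$ and a clean endpoint $y\in L^s$ with $\pi_s(y)\in S$, I will show that $L^s$ contains a leafwise disk around $y$ of a radius $\delta>0$ that is clean, where $\delta$ depends only on $\Sigma^s,\Sigma^u_1,\tau,R$. The shadow $P(\widetilde{L}^s,\cSigu_1)$ is defined as a union of parallel loci, and a point belongs to it exactly when a dual pair of disks passes through it. Since both laminations are normal to $\tau$, the normal-disk combinatorics near $y$ are among finitely many types. Being in the shadow is a condition on which normal disks of $\cSigu_1$ the stable sheet runs parallel to, so it is locally constant on the stable sheet away from a set controlled by the finitely many branch-locus arcs and normal-disk boundaries of $\Sigma^u_1$ meeting the $\tau$-simplices. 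The effective cutting arcs of Lemma \ref{lemma: every leaf has cutting arc system} have their endpoints in the interiors of sectors disjoint from the shadow. I will therefore arrange these endpoints to be chosen at distance at least $\delta$ from the finitely many combinatorial walls of $\Sigma^u_1\cap S$, pushing an endpoint within its clean region if necessary. This choice makes the clean region around the endpoint contain a $\delta$-disk.

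\textbf{Step 2: the finite net.} Let $V_S\subset S$ be a finite $\delta/2$-net of the compact sector $S$. This net depends only on $S$, $\delta$, and the fixed data, and not on $\Lambda^s$. If $y$ is a clean endpoint over $S$, choose $v_i$ with $d(v_i,\pi_s(y))<\delta/2$. The fibered neighborhood over a sector is a product $S\times I$, so the leaf $L^s$ meets $I^s_{v_i}$ at a point $x_i$ lying in the same sheet as $y$ within leafwise distance less than $\delta$. The leafwise and projected metrics are uniformly comparable on a product sector, after rescaling $\delta$ by the comparison constant. Hence $x_i$ lies in the clean $\delta$-disk from Step 1, so $x_i\notin P(\widetilde{L}^s,\cSigu_1)$. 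Equivariance is automatic, because cleanliness and the shadow are deck-invariant.

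\textbf{Main obstacle.} I expect the hardest part to be Step 1: obtaining a uniform $\delta$, that is, showing that clean points are not arbitrarily thin slivers squeezed between parallel loci of different unstable leaves. I would first try to use the hypothesis that the shadow is determined by the carrier $\Sigma^u_1$ rather than by individual leaves. The boundary of the shadow in a stable sheet then lies in $\pi_s$ of the vertical boundary and branch locus of $N(\Sigma^u_1)$ together with the normal-disk boundaries. This is a fixed finite 1-complex in each sector, and its complementary regions in $S$ have inradius bounded below. A clean endpoint can be moved into such a region, and the region's inradius gives $\delta$.
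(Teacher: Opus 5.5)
Your reduction is sound once Step 1 holds. If there were a $\delta>0$, depending only on $\Sigma^s,\Sigma^u_1,\tau,R$, such that every clean point over $S$ lies in a clean region of its plaque containing a $\delta$-disk over $S$, then a finite net (indeed one point per cell) would finish. The gap is that Step 1 is not established, and the mechanism you propose for it is not correct as stated.

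You claim that on every stable plaque over $S$ the boundary of the shadow projects into one fixed finite 1-complex of $S$. That complex is built from $\pi_s(\partial_v N(\Sigma^u_1))$, the branch locus, and normal-disk boundaries. But the shadow lies inside $N(\Sigma^u_1)$, since dual disks are sections of unstable saturations. So a plaque $S\times\{t\}$ of $N(S)\cong S\times I$ changes status along $(S\times\{t\})\cap\partial N(\Sigma^u_1)$. Two things go wrong with your list:
\begin{itemize}
\item It omits $\partial_h N(\Sigma^u_1)$, which is where a stable sheet transverse to $\Lambda^u$ typically leaves an unstable slab.
\item $\pi_s(\partial_v N(\Sigma^u_1))$ is the projection of a surface, in general 2-dimensional rather than a 1-complex.
\end{itemize}
These boundary curves move with the height $t$, so different plaques have different clean regions in $S$. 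Near a height at which the plaque only grazes $\partial N(\Sigma^u_1)$, the clean part can be an arbitrarily thin sliver. Nothing in your argument excludes this, so neither the uniform $\delta$ nor a fixed metric net follows. You flagged this as the main obstacle, but your proposed resolution assumes it away.

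The paper avoids any metric lower bound. It uses that $\Sigma^u_1$ is a bounded splitting of a fixed Kneser surface, so only uniformly many unstable sheets meet $N(S)$; your argument never uses this input. It then decomposes the three-dimensional region $N(S)\setminus\text{int}\,N(\Sigma^u_1)$ into finitely many components $C_i$. Stable plaques crossing each $C_i$ are sorted into finitely many passage types, given by normal-disk types together with entry and exit walls. For each type the paper chooses finitely many points of $S$ whose stable fibers meet every realization, using that realizations of a fixed type form an ordered family of parallel normal disks between fixed unstable walls.

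So the representative points are attached to height-dependent passage types, not to a single sheet-independent cell structure on $S$. The same bookkeeping also gives the uniform bound on clean/shadow transitions along each fiber $I^s_v$, which the paper uses afterwards for the blowups. To close your Step 1 you would need this kind of finiteness of passage types, not a fixed 1-complex in $S$.
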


\begin{proof}
    Consider a sector $S\subset \Sigma^s$. By construction, $\Sigma^u_1$ is a uniform splitting of a fixed Kneser branched surface, so it has bounded combinatorial complexity. In particular, there is some constant $N$ depending only on $R$, the branched surfaces $\Sigma^s, \Sigma^u_1$, and the triangulation $\tau$ so that at most $N$ sheets of the unstable lamination intersecting the fibered neighborhood of the stable sector $S$.

    In other words, we have a uniformly finite number of connected components $C_i$ of $N(S)\setminus \text{int} N(\Sigma^u_1)$. These are regions in the stable fibered neighborhood that lie between unstable fibered neighborhoods. Every $C_i$ is laminated by sheets of the stable lamination, and the combinatorial type of these sheets are finite. By combinatorial type, we mean the stable normal disk types in the triangulation together with the how it enters and exists $C_i$. Again, this data depends only on the two branched surfaces and the triangulation. For each passage combinatorial type $T$, choose finitely many points $V_{T}\subset S$ whose stable fibers meet every realization of $T$ as represented in the stable sheets. In general, a uniform number suffices because we have standard normal coordinates in a tetrahedron, all realizations of a fixed combinatorial passage type form an ordered family of parallel stable normal disks between a fixed collection of unstable normal walls. 

    Set $V_S = \bigcup_{C_i}\bigcup_T V_T$. Observe that now the cardinality of $V_S$ depends only on the  branched surfaces $\Sigma^s, \Sigma^u_1$, the constant $R$, and the triangulation $\tau$.

    Consider also that for each $v\in V_S$, the fiber $I_v^s$ decomposes into intervals of being clean or not only when it encounters the unstable sheets, so a consequence of there being a uniformly bounded number of unstable sheets interacting with $I_v^s$ is that the number of transitions is also uniformly bounded. In other words, now that we've established a uniformly finite set of candidate endpoints $V_S$ for cutting arcs, for each $v_i\in V_S$, the fiber $I_{v_i}^s$ is partitioned into uniformly finitely many connected components alternating between groups of stable plaques for which $v_i$ projects to a clean point and groups that see projections that are not clean.

\end{proof}

\begin{prop}\label{prop: unifromly finite mother graph of all cuts}
    There is a finite graph $G$ of smooth arcs on $\Sigma^s$ so that given a leaf $L^s$, there is a subgraph $H\subset G$ with a lift $H' = \pi_s^{-1}(H)$ to $L^s$ and another lift $\widetilde{H}'$ to the universal cover $\widetilde{L}^s$ satisfying the following properties:
    \begin{enumerate}
        \item Every arc $e\in E(H)$ is of length at most $2(R+C)$, where $C$ is the maximum diameter of the sectors in $\Sigma$;
        \item Endpoints of arcs $a_i\in E(H)$ are clean in $L^s$;
        \item Every connected component of $\widetilde{L}^s\setminus \widetilde{H'}$ is a disk of diameter at most $2(R+C)$
    \end{enumerate}
    
    Then, after a deck-equivariant isotopy, the cutting family is the full lift of a finite collection of arcs on $\Sigma^s$. The length condition and endpoint conditions are preserved in the universal cover.
\end{prop}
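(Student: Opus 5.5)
The plan is to build $G$ as a finite ``mother graph'' whose vertices are the union $V=\bigcup_S V_S$ of the finite candidate sets from Proposition \ref{prop: uniformly finite clean enpoints}. Its edges will be, for each ordered pair $v,w\in V$ and each of finitely many homotopy classes of paths in $\Sigma^s$ of combinatorial length at most $2(R+C)$, one fixed smooth embedded arc realizing that class. Finiteness is automatic. $\Sigma^s$ has finitely many sectors, and a path of bounded length crosses a bounded number of branch arcs. At each branch crossing there are finitely many choices of sector in which to continue. So the set of carried paths between $v$ and $w$ of length at most $2(R+C)$, up to sector-by-sector isotopy, is finite. The constant $C$ comes in because a path is counted sector by sector, and each sector can add up to its diameter.

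Given a leaf $L^s$, start from the equivariant graph $G_{L^s}$ of Lemma \ref{lemma: every leaf has cutting arc system}, whose arcs have length at most $2R$ and clean endpoints. Each endpoint $y$ lies in some sector $S$, so by Proposition \ref{prop: uniformly finite clean enpoints} there is $v_i\in V_S$ with $x_i=L^s\cap I^s_{v_i}$ clean in $L^s$. Slide the endpoint $y$ within the plaque of $S$ to $x_i$; this path has length at most $C$. A key step is to check that this slide can be done while staying in the clean locus, or at least that the new endpoint is clean. Property (2) only requires endpoints to be clean, and $x_i$ is clean by construction. Each modified arc then has length at most $2R+2C$, which gives (1). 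Its projection $\pi_s$ to $\Sigma^s$ is a carried path between vertices of $V$ of length at most $2(R+C)$. Isotope it rel endpoints within its sectors to the chosen representative edge of $G$. Let $H$ be the set of edges that arise. Since every choice was made downstairs in $\Sigma^s$, the full lift $\widetilde H'$ is automatically deck-equivariant, which gives the final clause.

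For (3), the complementary components of $\widetilde L^s\setminus G_{L^s}$ are disks of diameter at most $2R$; outside the shadow, one can add clean arcs freely since the clean locus is $R$-dense. Each endpoint moves by at most $C$, and each arc is replaced by an isotopic arc in the same strip of sectors. So every complementary region changes by an isotopy supported in a $C$-neighborhood of the old graph. This keeps the disks as disks, and by the triangle inequality their diameters stay at most $2R+2C$. One also has to make sure the re-routed edges remain embedded and do not cross one another, so that the complement regions are genuinely disks. I would handle this by choosing the representatives in $G$ in general position, then resolving any transverse crossings by adding each crossing point as a vertex. Since crossing points need not be clean, I would instead perturb crossings into the clean locus using $R$-density.

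I expect that step to be the main obstacle: controlling embeddedness and clean endpoints simultaneously after snapping to the finite vocabulary, and ensuring that the endpoint slides stay inside the correct component of the fiber $I^s_{v_i}$. The uniformly bounded number of clean/non-clean transitions along each fiber, from the end of the proof of Proposition \ref{prop: uniformly finite clean enpoints}, is what I would use to make this choice canonical.
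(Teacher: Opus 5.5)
Your proposal follows the paper's proof essentially step for step. The vertices are $\bigcup_S V_S$ from Proposition~\ref{prop: uniformly finite clean enpoints}, the endpoints of the leafwise cutting arcs of Lemma~\ref{lemma: every leaf has cutting arc system} are moved by at most $C$ to candidate clean points, and the length and diameter bounds come from the triangle inequality. Your indexing of edges by bounded-length carried path classes, rather than the paper's single edge per vertex pair, is if anything a sharpening, since it makes the lifted edge from a snapped endpoint reach the intended plaque. The embeddedness step you flag as the main obstacle can simply be dropped, as the paper does by explicitly letting edges of $G$ cross: crossing points are not endpoints, so (2) is unaffected, and complementary components of a connected unbounded graph in the plane $\widetilde{L}^s$ are disks regardless. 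Your proposed fix of perturbing crossings into the clean locus would not be available anyway, since the edges of $G$ are fixed independently of the leaf. The only extra precaution the paper takes is to first reroute the cutting arcs so that the two endpoints of each lie in different sectors, so that no arc degenerates when its ends are snapped.
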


\begin{proof}
    For each section $S\subset \Sigma^s$, we use Proposition \ref{prop: uniformly finite clean enpoints} to obtain a set of potential clean endpoints $V_S$ for potential cutting arcs. Construct a graph $G$ on $\Sigma^s$ with vertices $V(G)=\bigcup_{S\subset \Sigma^s} V_S$, and for any pair of vertices $v, v'\in V(G)$, connect them with a smooth edge in $\Sigma^s$ if one of length at most $2(R+C)$ exists, where $C$ is the maximum diameter of all the sectors. 
    
    Let $L^s$ be a stable leaf in the stable lamination $\Lam^s$. Assuming that the scale of $R$ is much bigger than the diameter of the sectors (if not, replace $R$ with $100C$), we can modify the cellulation of the leaf by cutting arcs in Lemma \ref{lemma: every leaf has cutting arc system} such that each cutting arc has endpoints in different sectors. This increases the lengths of cutting curves by at most $2C$ and the area of each cell by at most $C^2\pi$. Then, replace vertices of cutting arcs with elements of $V_S$. For any original cutting arc in a leaf $L^s$ from Lemma \ref{lemma: every leaf has cutting arc system} that had endpoints on different sectors, the new endpoints are each at most distance $C$ from the original ones, and the smooth curve connecting them is of length at most $R+2C$. In general, the edges $E(G)$ may intersect each other on $\Sigma^s$. 

    As for connected components of $\widetilde{L}^s\setminus \widetilde{H'}$, consider the complementary regions of the original cutting arcs in Lemma \ref{lemma: every leaf has cutting arc system}. Vertices of these regions have moved at most distance $C$ each, which increases the diameter by at most $2C$. The region remains a disk.
\end{proof}

Next, we describe how to perform small blowups near these junctures of transition and thereby construct a new system of cutting arcs on the new split stable branched surface. 

\begin{lemma}
    For any point $v\in V_S\subset \Sigma^s$, the stable fiber $I_v^s$ can be written as a finite union of subintervals alternating between ones on which $v$ is clean in the leaf and ones where $v$ is in the shadow. Let $T_v$ be the finite set transition points.\qed
\end{lemma}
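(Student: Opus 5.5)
The plan is to parametrize the fiber $I_v^s$ by the stable plaques it meets, and to show that whether a plaque is clean at $v$ can only change where $I_v^s$ crosses the unstable branched surface. Finiteness of $T_v$ then reduces to finiteness of those crossings, which follows from the compactness and normality already used in Proposition \ref{prop: uniformly finite clean enpoints}.

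First, fix a lift $\widetilde v$ and work in the universal cover. The leaves of $\cLams$ meet $I^s_{\widetilde v}$ in a closed set, and the gaps are complementary intervals on which nothing needs to be decided. For $p\in I^s_{\widetilde v}\cap\cLams$, the question is whether $p$ lies in the shadow $P(\widetilde L^s(p),\cSigu_1)$ of its own leaf. Because $\Sigma^u_1$ is a uniform splitting of a Kneser branched surface normal to $\tau$, the fiber $I^s_{v}$ meets $N(\Sigma^u_1)$ in at most $N$ components, where $N$ is the uniform constant from the proof of Proposition \ref{prop: uniformly finite clean enpoints}. These components are subintervals of $I^s_v$ that alternate with components of $I^s_v\setminus \text{int}\,N(\Sigma^u_1)$. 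I would take $T_v$ to be the finite set of endpoints of these subintervals, together with a bounded number of additional points described below.

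Second, show that cleanliness is constant on each piece between consecutive points of $T_v$. Suppose $p,q$ lie in one component of $I^s_v\setminus T_v$. Their stable plaques through the fibered neighborhood of $S$ then pass between the same unstable walls, so they have the same passage combinatorial type in the sense of Proposition \ref{prop: uniformly finite clean enpoints}. If the plaque through $p$ lies in an $r$-parallel locus with dual disk $D^u$, slide $D^u$ within the same unstable sectors of $\cSigu_1$ to produce a dual disk for the plaque through $q$. The unambiguous continuation to radius $2R$ provided by the distance-$2R$ splitting, exactly as used in Proposition \ref{prop: no radius R disks in shadow}, ensures the sliding is well defined for $r<R$. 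Since the condition is symmetric, being in the shadow is constant on the piece, and hence so is being clean.

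The main obstacle is the sliding step. Two plaques of the same passage type near $v$ can diverge far from $v$ and cross a branch locus of $\Sigma^s$ differently. In that case the dual disk for $p$ need not transfer to $q$. To handle this, I would enlarge $T_v$ by the points where $I^s_v$ separates plaques that follow different sector sequences within radius $R$ of $v$. There are finitely many such sequences, because only finitely many sectors meet a radius-$R$ disk, and plaques following a given sequence form an interval in $I^s_v$ by the order-preserving property of the fibers. This contributes boundedly many points, so $T_v$ remains finite, and the subintervals alternate between clean and shadowed once adjacent pieces of the same status are merged.
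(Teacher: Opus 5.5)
Your opening reduction is the justification the paper itself gives, in the last paragraph of the proof of Proposition \ref{prop: uniformly finite clean enpoints}: points of $I^s_v$ outside $N(\Sigma^u_1)$ are automatically clean, and $I^s_v$ meets $N(\Sigma^u_1)$ in boundedly many components. The lemma itself is stated there without proof. The genuine gap is in the step you add to show that the status is constant between consecutive points of your $T_v$.

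The saturation $\sat_u(D^u)$ depends only on the projection of $D^u$ to $\cSigu_1$, so sliding $D^u$ along its fibers changes nothing. What must actually be shown is that the plaque through $q$ parallel to $D^s_p$ lies in some unstable saturation and meets each fiber there once. None of the data you record implies this:
\begin{itemize}
\item The endpoints of $I^s_v\cap N(\Sigma^u_1)$ only see where the single fiber over $v$ crosses $\partial N(\Sigma^u_1)$.
\item The $\Sigma^s$ sector sequences only see the branching of $\Sigma^s$, which carries no information about $\Sigma^u_1$.
\end{itemize}
Because $\Lamu$ is transverse to $\Lams$, unstable sheets do cross stable plaques. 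So your assertion that the plaques through $p$ and $q$ pass between the same unstable walls throughout $N(S)$ is false in general. Away from $v$, $\partial_h N(\Sigma^u_1)$ or the branch locus of $\Sigma^u_1$ can separate the two plaques, so that at a fixed scale one is a section of the unstable $I$-bundle and the other is not. The unambiguous continuation from the distance-$2R$ splitting does not rescue this. In Proposition \ref{prop: no radius R disks in shadow} it only merges two unstable disks that are already dual to overlapping stable patches; it does not produce a dual disk for a neighboring plaque.

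The repair runs differently from the route you chose. The shadow is a union over all scales $0<r<R$. A small disk about $p$ inside a dual disk $D^s_r$ is again dual to its projection, since the paper allows topological dual disks. Hence whether $p$ lies in the shadow is decided in an arbitrarily small neighborhood of $p$ in its leaf. This makes the far-from-$v$ divergence you flagged as the main obstacle irrelevant, and your $\Sigma^s$ refinement unnecessary.

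The real content is then local, and your argument does not supply it. Inside a single component of $I^s_v\cap N(\Sigma^u_1)$, you must show that the plaques fail to be local sections of the unstable $I$-bundle at only finitely many points of $I^s_v$. Failure happens through tangency with unstable fibers or near the branch locus of $\Sigma^u_1$, and controlling it requires normality of both carriers to $\tau$.

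If you instead want a fixed-scale statement, you would need three things. First, refine $T_v$ by the full position of the plaques relative to $N(\Sigma^u_1)$ over the whole disk. Second, bound the number of such relative types. Third, prove each type occupies finitely many intervals of $I^s_v$. Monotonicity of nested plaques gives intervals for $\Sigma^s$-routes, but not automatically for these relative types.
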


    Consider an edge $e\in E(G)$ with endpoints $v, v'$. Realizations of $e$ in stable leaves are parametrized by a Cantor set $K\subset I$, and a realization is admissible as a cutting arc if both endpoints are clean. A realization of $e$ on a stable leaf $L^s$ joins a point $x\in I_v^s$ to a point $y\in I_{v'}^s$, so the only places where the status of admissibility changes is at transition points $T_v, T_{v'}$. In the proof of Proposition \ref{prop: uniformly finite clean enpoints}, we show that there are uniformly finitely many such transition points. Blow up a small neighborhood in the stable direction near transition points to create complementary regions that are thickenings of disks. See Figure \ref{fig: local blowup to make disk complementary regions} for an illustration of a cross section of the resulting neighborhood of a new branched surface created after local blowups at transition points.
    
\begin{lemma}\label{lemma: create bubble and get finitely many branched surfaces}
    There are finitely many branched surfaces $\text{Bl}_i(\Sigma^s)$ obtained from local blowups each satisfying the following: 
    \begin{enumerate}
        \item For every stable lamination $\Lam^s_i$ carried by $\Sigma^s$, $\Lam_i^s$ is carried by some $\text{Bl}_i(\Sigma^s)$.
        \item The graph $G$ on $\Sigma^s$ descends to a graph $G'$ on the new branched surface $\text{Bl}_i(\Sigma^s)$ such that every vertex $v\in V(G')$ is clean with respect to the branched surface $\text{Bl}_i(\Sigma^s)$.
        \item Each edge $E(G')$ lifts to effective cutting arcs $e$ for leaves carried by $\text{Bl}_i(\Sigma^s)$.
    \end{enumerate}
    
\end{lemma}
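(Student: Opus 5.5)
The plan is to build the blown-up branched surfaces fiber by fiber over the finite vertex set $V(G)=\bigcup_S V_S$ of Proposition \ref{prop: unifromly finite mother graph of all cuts}, and then to count the possible outcomes. Fix a lamination $\Lam^s$ carried by $\Sigma^s$. For each $v\in V(G)$, the preceding lemma gives a decomposition of the fiber $I_v^s$ into finitely many subintervals. On these subintervals the plaques of $\Lam^s$ through $I_v^s$ alternate between being clean and lying in the shadow $P(\widetilde{L}^s,\cSigu_1)$. By the proof of Proposition \ref{prop: uniformly finite clean enpoints}, the number of transition points $|T_v|$ is bounded by a constant $N$. This constant depends only on $\Sigma^s,\Sigma^u_1,\tau,R$, because transitions occur only where $I_v^s$ crosses one of the boundedly many unstable sheets meeting $N(S)$.

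First, at each transition point $t\in T_v$, I choose a point of $I_v^s\setminus\Lam^s$ near $t$; this is possible after a small isotopy, since $\Lam^s$ is nowhere dense. I then split $\Sigma^s$ along a small disk $D_v\subset S$ around $v$, opening a product bubble $D_v\times[a,b]$ in the complement of $\Lam^s$. Each new bubble separates two adjacent groups of sheets over $D_v$. Over $D_v$ the resulting branched surface $\text{Bl}(\Sigma^s)$ then consists of at most $N+1$ stacked sheets, and along the fiber of each sheet every plaque is uniformly clean or uniformly in the shadow. Next, I declare a lift of $v$ to a sheet to be a vertex of $G'$ exactly when that sheet is clean. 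Each edge $e\in E(G)$ is an arc in $\Sigma^s$ from $v$ to $v'$. It has finitely many lifts to $\text{Bl}(\Sigma^s)$, determined by which sheet over $D_v$ and which sheet over $D_{v'}$ it ends on, together with the splitting pattern along $e$. The splitting along $e$ itself is already fixed, because the bubbles are confined to the $D_v$. I keep only the lifts whose two endpoints are clean vertices; this keeps the edge set of $G'$ finite. This gives (2). It also gives (3), because an arc of a leaf realizing such a lift has both endpoints in clean plaques, and so is effective by definition. Property (1) holds by construction, since the splittings take place in the complement of $\Lam^s$.

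For finiteness, the point is that the combinatorial data of $\text{Bl}(\Sigma^s)$ is bounded. That data consists of, for each $v$, the number of bubbles (at most $N$) and the clean/shadow label of each sheet. Since $V(G)$ is finite and fixed independently of $\Lam^s$, there are at most $\prod_v 2^{N+1}(N+1)$ combinatorial types. Two laminations with the same type yield branched surfaces that are isotopic through splittings supported in the $D_v\times I$. So only finitely many $\text{Bl}_i(\Sigma^s)$ occur, and every $\Lam^s$ is carried by one of them.

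The main obstacle is (2) understood as a statement about the branched surface rather than about a single lamination. Cleanness is defined through the shadow of $\cSigu_1$, and that shadow depends on the actual unstable lamination $\Lam^u$, not only on $\Sigma^u_1$. I would handle this by recording, as part of the type, the position of each transition relative to the unstable sheets crossing $I_v^s$. Those unstable sheets are combinatorially fixed by the passage types $T$ of Proposition \ref{prop: uniformly finite clean enpoints}. I then have to check that cleanness of a sheet over $D_v$ is determined by this passage data. The first thing I would try is to argue that the parallel locus is a union of sheets between fixed unstable normal walls. Then cleanness is constant on each group, and it becomes a property of the sheet of $\text{Bl}_i(\Sigma^s)$.
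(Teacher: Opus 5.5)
Your proposal is essentially the paper's argument. You split $\Sigma^s$ at the uniformly boundedly many transition points on the fibers $I_v^s$ over the finite vertex set $V(G)$, place the vertices of $G'$ only on clean sub-fibers, keep only edges joining clean vertices, and get finiteness from the uniform bound on transition data. The differences are minor: you confine the bubbles to disks $D_v$ and count per-vertex data, whereas the paper blows up along each edge and counts orderings of $T_v, T_{v'}$. Your closing worry about dependence on $\Lambda^u$ also needs no extra passage-data argument. The clean/shadow labels are already part of your finite type, and (2)--(3) only need to hold for the lamination pair that produced that type, which is how the paper's ``by construction'' should be read.
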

\begin{proof}
    There are finitely many branched surfaces $\text{Bl}(\Sigma^s)$ resulting from local blowups at all possible transition points of each fiber $I_v^s$ for $v\in V(G)$. To see this, at each edge $e$ with a pair of endpoints $v, v'$, there are only finitely many orderings for $T_v, T_{v'}$ and therefore finitely many outcomes for the resulting branched surface up to isotopy near that edge. Since we only modify the branched surface according to finitely many edges, the resulting number of possible branched surfaces is also finite. See Figure \ref{fig: local blowup to make disk complementary regions} for one possible ordering of transition points.

    Every vertex in $G'$ being clean in $\text{Bl}(\Sigma^s)$ is by construction: when blowing up near a transition point, only assign a vertex to the side of the blowup for which the sub-fiber clean. Because we have blown up at every transition point, we can ensure that each vertex $V(G)$ descends to one point in $\text{Bl}(\Sigma^s)$. Connect two vertices $v, v'$ with a smooth arc if the arc can be made of length $R+2C$. The local splitting only changes curve lengths by an arbitrarily small amount. There are finitely many such edges, and the number is uniform independent of the stable laminations.
\end{proof}

\begin{figure}
    \centering
    \includegraphics[width=0.6\linewidth]{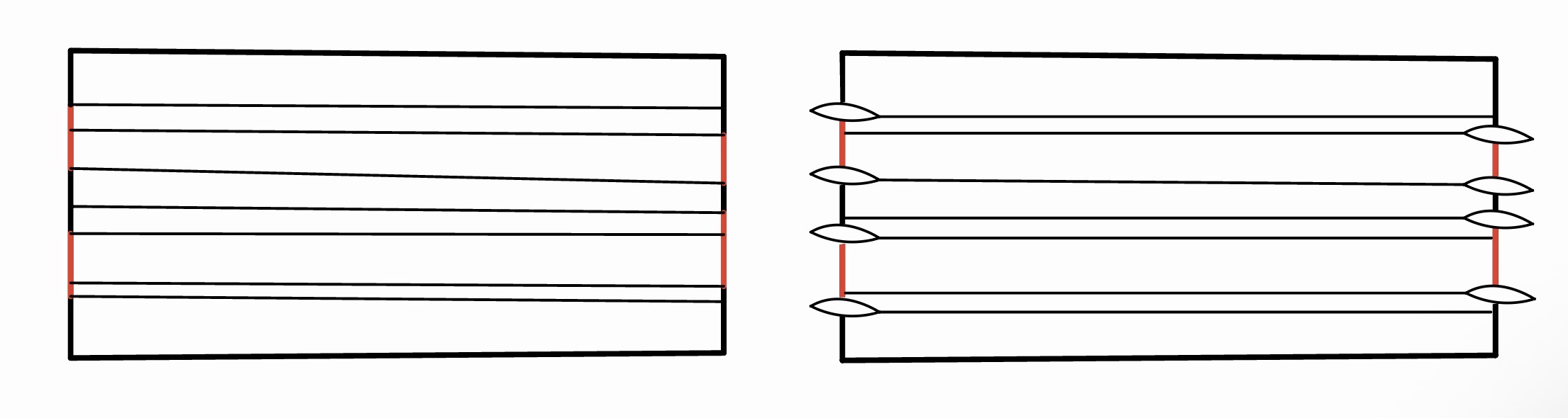}
    \caption{Left: an illustration of a possible ordering of transition points $T_v$ and $T_{v'}$. The left and right vertical edges are the fibers $I_v^s, I_{v'}^s$, with alternating black/red colors representing subintervals where the point is clean or in a shadow. There are four transition points on each side; Right: a cross section of the resulting fibered neighborhood of the new branched surface $\text{Bl}(\Sigma^s)$ after local blowups at transition points.}
    \label{fig: local blowup to make disk complementary regions}
\end{figure}

\subsection{Isotoping near cutting arcs}\label{subsection: isotoping near cutting arcs} In this subsection, we show that near a cutting arc, we can isotope the stable and unstable laminations while preserving their transversality to each other, so that we eliminate a neighborhood of the cutting arc from the shadow.

Consider an edge $e\in E(G')\subset \text{Bl}(\Sigma^s)$. After doing a distance $R+2C$ splitting on $\text{Bl}(\Sigma^s)$ to obtain $\Sigma^s_1$, we can arrange that we see no stable branching in a neighborhood of $e$. Consider the intersection of the unstable lamination with a neighborhood of $e$ in $\Sigma^s_1$, as illustrated in Figure \ref{fig: stable and unstable interaction near a cutting arc}.

\begin{figure}
    \centering
    \includegraphics[width=0.8\linewidth]{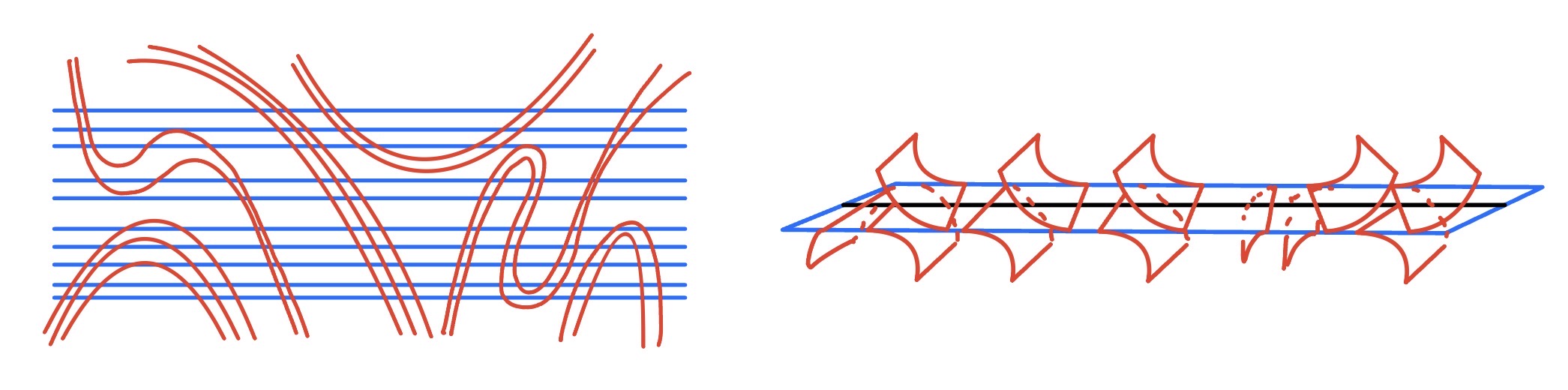}
    \caption{Left: a cross section of the stable leaves near a cutting arc $e$ along with interleaving unstable leaves; Right: a more global view of possible unstable branching in a neighborhood of $e$ as corresponding to the left picture.}
    \label{fig: stable and unstable interaction near a cutting arc}
\end{figure}

\begin{lemma}\label{lemma: straightening isotopy near a cut}
    Given an effective cutting arc $e\subset \Sigma_1^s$, there is an isotopy of $\Lambda^u$ supported in the fibered neighborhood $N(\Lam^s_i)$ such that afterwards, every point in a neighborhood of $e$ is clean.
\end{lemma}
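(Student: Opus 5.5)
The plan is to work in the product region $E=\pi_s^{-1}(N(e))\subset N(\Sigma^s_1)$. Since $\Sigma^s_1$ was obtained by a distance $R+2C$ splitting, there is no stable branching over a small disk neighborhood $N(e)$, so $E\cong N(e)\times I$ and the stable plaques in $E$ are horizontal sections $N(e)\times\{t\}$, $t$ in a closed subset of $I$. The unstable lamination meets $E$ in finitely many clumps of sheets; this uses the bounded combinatorial complexity of $\Sigma^u_1$ already invoked in Proposition \ref{prop: uniformly finite clean enpoints}. Each clump either crosses the stable fibers transversely or is a thin layer of sheets parallel to some stable plaques over part of $N(e)$. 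By the definition of shadow, the points of $N(e)$ that are not clean are exactly those lying under such parallel sheets. The goal is to tilt every parallel unstable clump so that, over $N(e)$, it meets each stable fiber in a short transverse arc and is no longer a section of the unstable saturation of any unstable disk dual to a stable one.

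First I would use effectiveness. Both endpoints of $e$ are clean with respect to $\text{Bl}(\Sigma^s)$ (Lemma \ref{lemma: create bubble and get finitely many branched surfaces}). So over small disks $D_0,D_1$ around the endpoints, no unstable clump in $E$ runs parallel to the stable plaques. Hence each parallel unstable clump over $N(e)$ is a strip: it enters and exits $E$ through the vertical boundary $\partial N(e)\times I$ along the long sides of $e$, and it is transverse to $\Lambda^s$ near the two ends. Corollary \ref{cor: no bigons in vertical boundary} (in the vertical-boundary form from Step 1) and the absence of monogons from Steps 2--3 of Theorem \ref{simnorm} then show that the traces of such a strip on the two long sides of $\partial N(e)\times I$ are monotone arcs without stable--unstable bigons.

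Next comes the isotopy itself, done one clump at a time in order of height in $I$. For a parallel clump, take a vertical rectangle $Q=e'\times I$ in $E$, where $e'$ is a pushoff of $e$. Inside $Q$, the clump's trace runs parallel to a stable plaque between two transverse crossings near the ends. The isotopy slides the unstable sheets along the flow direction inside the stable plaques they already meet, so each parallel segment becomes a crossing. This is the bigon-reduction move of Step 0, applied to the thin bigons formed between the unstable trace and a stable plaque in a vertical slice. The isotopy ball $P$ lies in $E\subset N(\Lambda^s)$ and is foliated by flow segments, so transversality of $\Lambda^s$ and $\Lambda^u$ is preserved and $\Lambda^s$ is fixed as a set. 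I would perform the move on a whole parallel family at once, as in Step 0, and make it equivariant under deck transformations stabilizing $E$. After finitely many moves (the number is bounded by the number of clumps), every unstable sheet in $E$ over a slightly smaller neighborhood of $e$ is transverse to the $I$-fibers. So no unstable $R$-disk has a stable section over that neighborhood, and every point there is clean.

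The main obstacle is that the slice-by-slice bigon reductions must be consistent across the two-dimensional neighborhood $N(e)$, rather than merely in one rectangle $Q$. The unstable clump over $N(e)$ can branch (Figure \ref{fig: stable and unstable interaction near a cutting arc}), so a tilt chosen for one sheet might create new tangencies with a neighboring sheet of the same clump. To handle this I would first parametrize the clump as $N(e)\times K$ with $K$ a Cantor set, after a further uniform splitting of $\Sigma^u_1$ near $e$. I would then choose a single tilting isotopy of the form $(x,t)\mapsto(x,t+h(x))$, with $h$ supported in $N(e)$ and monotone across $e$, and apply it simultaneously to all sheets. Because $h$ is monotone, the order of sheets is preserved and no new parallelism can be created. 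I would then check that the isotopy stays away from previously treated cutting arcs by taking the neighborhoods $N(e)$ of distinct edges of $G'$ disjoint, or by handling intersection points of edges first in small balls.
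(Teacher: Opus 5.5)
Your setup (the product $E=\pi_s^{-1}(N(e))\cong N(e)\times I$ after splitting, with Step~0 bigon reductions done clump by clump in the vertical slice over $e$) matches the paper's first stage. However, the step that actually produces cleanness is missing, and the substitute you propose fails.

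Eliminating stable--unstable bigons does not eliminate parallelism. An unstable sector that passes through the stable stack over $N(e)$ monotonically at a shallow angle has no bigons. It is still a local section of the stable $I$-bundle, and so still in the shadow. Your stated end state, that every unstable sheet over a smaller neighborhood of $e$ is transverse to the $I$-fibers, is exactly this parallel configuration. (If you meant transverse to the stable plaques, that holds automatically.) So ``no unstable $R$-disk has a stable section'' does not follow.

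The paper needs a second, different move. After bigon reduction, each local unstable sector near $e$ is either disjoint from the stable sheets or crosses the whole stack. In the crossing case it is isotoped to coincide with the stable normal $I$-fibers near $e$.

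Your shear $(x,t)\mapsto(x,t+h(x))$ cannot play this role:
\begin{itemize}
\item If $h$ is supported in $N(e)$, it vanishes on both long sides, so ``monotone across $e$'' forces $h\equiv 0$.
\item A compactly supported shear sends graphs over $N(e)$ to graphs. If it stays shallow, the sheets remain parallel.
\item If it is steep, the sheet climbs through some stable plaques and comes back down through the same ones. This recreates precisely the bigons you just removed.
\end{itemize}

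The support claim also has a gap. The Step~0 guiding ball is bounded by a disk in $L^s$, a disk in $L^u$, and the bigon. The two disks are glued along the flow segment joining the bigon's corners, and that segment need not stay in $E$, so $P\subset E$ is unjustified. The paper gets the support it needs from Corollary~\ref{cor: no bigons in vertical boundary}: a guiding ball leaving $N(\Sigma^s_1)$ would produce a stable--unstable bigon in $\partial_v N(\Sigma^s_1)$. Your use of that corollary on the long sides $\partial N(e)\times I$ is not valid, because those faces lie in the interior of $N(\Sigma^s_1)$, not in its vertical boundary. Likewise, you cannot protect earlier cuts by taking the $N(e)$ pairwise disjoint. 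Edges of $G'$ may cross, and the bigon moves are not confined to $N(e)$ in any case.
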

\begin{proof}
    The stable lamination restricted to the stable saturation of a neighborhood of a cutting arc $\sat_s(U(e))$ consists of parallel thin disks parametrized by a Cantor set. The unstable lamination is transverse to the stable lamination. If $U(e)$ is thin, it is sufficient to consider a 2-dimensional cross section as illustrated in Figure \ref{fig: stable and unstable interaction near a cutting arc} (Left). 
    
    Consider a bigon $B$ bounded by an edge $e^s \subset L^s$ in a stable leaf and an edge $e^u\subset L^u$ in an unstable leaf. The two vertices $v, v'\in V (B)$ belong in the intersection $L^u\cap L^s$  and therefore lie in an orbit. We can find a curve $c$ in the orbit connecting $v$ to $v'$ as well as disks $D^s, D^u\subset L^s, L^u$ so that $\partial D^s = e^s\cup c, \partial D^u = e^u\cup c$. Together, there is a ball $B^3$ such that $\partial B^3 = D^s\cup D^u\cup B$. Isotope the unstable leaf $L^u$ guided by $B^3$ across $D^s$ to eliminate the bigon $B$. The isotopy preserves flow lines and transversality.

    Consider the maximal nested families of bigons as described by maximal clumps of stable an unstable leaves, similar to the process of simultaneously normalizing the stable and unstable laminations in the first place. The isotopies can be performed simultaneously by isotoping all the unstable sheets simultaneously. The isotopies are supported on a small neighborhood of the ball $B^3$. Different maximal clumps produce disjoint guiding balls.

    \begin{figure}
        \centering
        \includegraphics[width=0.6\linewidth]{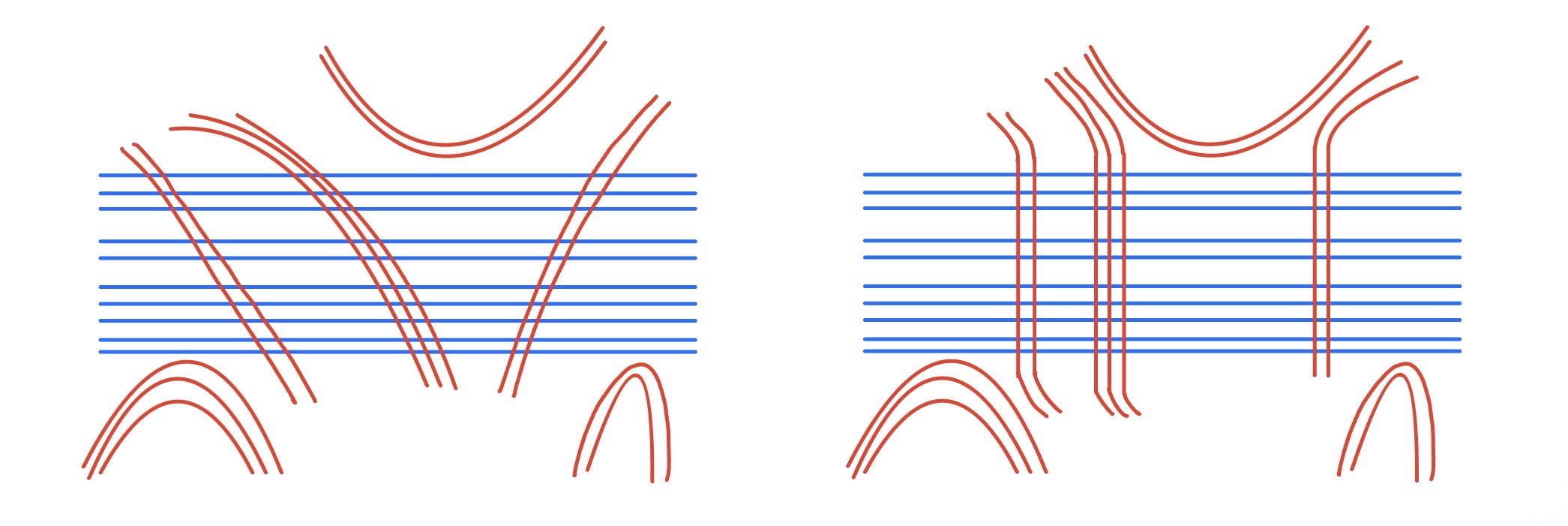}
        \caption{The result of aligning unstable sheets to coincide with the stable normal fibers.}
        \label{fig: cutting arc after bigon reduction}
    \end{figure}

    After all bigon reduction isotopies, each local sector of the unstable branched surface $\Sigma^u_1$ either becomes disjoint from the stable sheets or intersects the entire neighborhood transversely, see Figure \ref{fig: cutting arc after bigon reduction} (Left) for an illustration. In the transverse intersection case, perform a further isotopy to align the unstable sheets to coincide with the stable normal fibers, as shown in Figure \ref{fig: cutting arc after bigon reduction} (Right). This way, the neighborhood $\sat(U(e))$ after isotopy is now disjoint from the shadow $P(L^s, \cSigu_1)$ for every leaf $L^s\in\Lam^s$. 
    
    Moreover, the unstable isotopy does not change the shadow of leaves near any cuts that have already been isotoped outside the neighborhood $U(e)$. To see this, note that during the isotopy eliminating bigons, we arrange for $B^3$ to be contained in the fibered neighborhood of $\Sigma^s_1$, which is possible as a consequence of Corollary \ref{cor: no bigons in vertical boundary}, since $B^3$ being outside the fibered neighborhood implies a stable-unstable bigon in the vertical boundary of the fibered neighborhood of the stable branched surface $\partial_vN(\Sigma^s_1)$. Therefore, every bigon elimination isotopy is supported in the fibered neighborhood of $\Sigma^s_1$, and no bigons exists near any cuts that has already undergone isotopy.
\end{proof}

Upon performing the isotopy in Lemma $\ref{lemma: straightening isotopy near a cut}$ for every edge $e\in E(G')$, a neighborhood of the saturation of $G'$ is disjoint from the shadow $P(\widetilde{L}^s, \cSigu_1)$ for every $L^s\subset \Lambda^s$. Next, we consider a decomposition of the fibered neighborhood $N(\Sigma^s_1)$ along these clean neighborhoods into simple pieces on which we can study the stable and unstable interactions further. We prove the following decomposition lemma.

\begin{lemma}\label{lemma: decomposition into local lemma pieces}
     The fibered neighborhood $N(\Sigma^s_1)$ of the stable branched surface, obtained from the Kneser branched surface $\Sigma^s$ by splitting and blowups, can be written as a finite union of thickened disks $D^2_i\times I_i$ such that for each thickened disk, the unstable branched surface $\Sigma^u_1$ is transverse to the stable leaves at the boundaries $\partial D_i^2\times I$. Further, this decomposition only depends on  $N(\Sigma^s_1)$ and is therefore independent of the underlying stable lamination $\Lam^s$.
\end{lemma}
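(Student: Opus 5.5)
The plan is to cut $N(\Sigma^s_1)$ along the fibered saturation of a small regular neighborhood of the finite graph $G'$, and to read off the thickened disks as the closures of the complementary pieces.

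\textbf{The cutting surface.} Let $U(G')\subset\Sigma^s_1$ be a thin regular neighborhood of the graph $G'$ from Lemma \ref{lemma: create bubble and get finitely many branched surfaces}, after the distance $R+2C$ splitting. Let $\pi_s^{-1}(U(G'))$ be its fibered saturation. By Lemma \ref{lemma: straightening isotopy near a cut}, applied edge by edge, every point of $\pi_s^{-1}(U(G'))$ is clean. Concretely, each unstable local sheet meeting it either is disjoint from the stable plaques there, or crosses the whole neighborhood transversely and agrees with the stable $I$-fibers. Lemma \ref{lemma: straightening isotopy near a cut} also says these isotopies are supported in $N(\Sigma^s_1)$ and do not disturb previously straightened cuts. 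So all edges can be treated simultaneously, and $\Sigma^u_1$ is transverse to the stable leaves along all of $\pi_s^{-1}(\partial U(G'))$.

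\textbf{The pieces are disks.} Let $W=\overline{\Sigma^s_1\setminus U(G')}$, and decompose $N(\Sigma^s_1)$ as $\pi_s^{-1}(U(G'))\cup\pi_s^{-1}(W)$.
\begin{enumerate}
\item On each leaf $\widetilde L^s$, the preimage of $G'$ cuts the shadow into disks of diameter at most $2(R+C)$, by Proposition \ref{prop: unifromly finite mother graph of all cuts}.
\item To make the pieces disks in $\Sigma^s_1$ itself, and not only leafwise, I would first enlarge $G'$ by adding a fixed finite cellulation of $\Sigma^s_1$ whose edges lie in the clean region. For instance, add edges of the branch locus after the blowups, pushed slightly into clean sectors. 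This is harmless because only finitely many edges are added, chosen depending only on $\Sigma^s_1$.
\item Each component $D_i$ of $W$ is then a disk in a single sector or a small union of sectors. After enough splitting, the branch locus meets $W$ only in $U(G')$, so $D_i$ contains no branching. Hence $\pi_s^{-1}(D_i)\cong D_i\times I_i$ is an honest product.
\item The pieces $\pi_s^{-1}(U(G'))$ are themselves thickened disks, or can be subdivided into them: thickened edge strips and vertex disks. There the unstable branched surface is entirely transverse, so their boundaries are also transverse.
\end{enumerate}

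\textbf{Independence of $\Lam^s$.} The graph $G'$, the blowups and the splitting are drawn from the finite lists of Proposition \ref{prop: uniformly finite clean enpoints} and Lemma \ref{lemma: create bubble and get finitely many branched surfaces}. These depend only on $\Sigma^s,\Sigma^u_1,\tau,R$. Therefore the decomposition depends only on $N(\Sigma^s_1)$.

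\textbf{Main obstacle.} The hardest step is reconciling leafwise disks with disks in the branched surface. A component of $W$ could carry nontrivial topology, such as an annulus around a sector, while its leafwise lifts are disks because different leaves take different routes through the branching. My first attempt is the enlargement in item 2: include enough clean arcs, including arcs crossing every sector, so that each sector is cut into disks. Then verify that transversality persists along the added arcs, using $R$-density of clean points (Proposition \ref{prop: no radius R disks in shadow}) to route them through clean fibers.
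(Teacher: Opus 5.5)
Your overall frame matches the paper: cut $N(\Sigma^s_1)$ along the saturation of $G'$, get transversality on the cut faces from Lemma~\ref{lemma: straightening isotopy near a cut}, and get independence of $\Lam^s$ from the finite lists. The genuine gap is your item 2, and item 3 rests on it.

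Transversality of $\Sigma^u_1$ along a cut is available only over \emph{effective} cutting arcs, i.e.\ arcs whose endpoints are clean for every carried leaf. After the blowups of Lemma~\ref{lemma: create bubble and get finitely many branched surfaces}, this is guaranteed only at the vertices of $G'$. A fixed cellulation edge chosen from $\Sigma^s_1$ alone, such as a push-off of the branch locus, has no such property:
\begin{itemize}
\item Cleanness is a property of a point of a particular leaf relative to the shadow of $\cSigu_1$. A fiber $I^s_v$ generally alternates between clean and shadowed subintervals, so ``clean sector'' is not a lamination-independent notion.
\item Proposition~\ref{prop: no radius R disks in shadow} gives $R$-density of clean points in each leaf separately. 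It does not give one fiber that is clean for all leaves at once.
\item Producing such fibers is exactly what $V_S$ and the blowups at the transition points $T_v$ were for, and they only supply the vertices of $G'$.
\end{itemize}
Along your added edges the unstable sheets can therefore stay parallel to stable plaques, and the required transversality on $\partial D^2_i\times I$ fails there. Item 3 is likewise unsupported. Splitting only relocates the branch locus, so confining it to $U(G')$ again requires adding edges along it, and those edges are not effective.

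The missing idea is that you need neither disks in $\Sigma^s_1$ nor branching-free pieces. The hypothesis of Theorem~\ref{thm: local lemma} asks for $D^2\times t$ sheets only away from a small neighborhood of the branch locus, so branching inside a piece is allowed. The paper works with disks bounded by cycles of $G'$ as they appear in the leaves:
\begin{itemize}
\item By Proposition~\ref{prop: unifromly finite mother graph of all cuts}(3), each component of $\widetilde{L}^s\setminus\widetilde{H}'$ is a disk of diameter at most $2(R+C)$, bounded by a lift of a cycle of $G'$.
\item Finiteness of $G'$ gives finitely many such carried disks.
\item Finiteness of the transition points on each $I^s_v$ gives finitely many parallel families of each disk in the fiber direction. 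Each family is one $D^2_i\times I_i$.
\end{itemize}
Leaves that take different routes through the branching simply land in different pieces, so the annulus scenario you describe never has to be cut. Every face $\partial D^2_i\times I_i$ then lies over $U(e)$ for an effective $e\in E(G')$, where Lemma~\ref{lemma: straightening isotopy near a cut} applies. If you replace your items 2 and 3 with this, the rest of your outline agrees with the paper's argument.
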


\begin{proof}
    Consider the saturation of the cutting arcs $V(G')$, and cut $N(\Sigma^s_1)$ along the union of all saturations $\sat_s(V(G'))$. Since the cutting graph $G'$ is finite, there are only finitely many cycles bounding disks in $\Sigma^s_1$. For each such disk region, there are at most finitely many parallel copies in the fiber direction because every stable fiber $I_v^s$ of $\Sigma^s_1$ has only finitely many transition points. Since a neighborhood $U(e)$ of any cutting arc $e\in E(G')$ is disjoint from the shadow, any intersection of the unstable branched surface with $\Sigma^s_1$ near $U(e)$ must be aligned with the stable normal fibers by the construction in Lemma \ref{lemma: straightening isotopy near a cut}.
\end{proof}

\section{The Local Lemma}\label{section: local lemma}
In this section, we describe how to show that after decomposing the fibered neighborhood $N(\Sigma^s_1)$ of the stable branched surface into thickened disks $D^2_i\times I_i$ as in Lemma \ref{lemma: decomposition into local lemma pieces}, we can ensure that the unstable lamination $\Lam^u$ interacts with each thickened disk piece as a trivial product. This leads us to construct finitely many pairs of transverse split Kneser branched surfaces, and Proposition \ref{prop: transverse Kneser pair determines flow} shows that each pair uniquely determines a pseudo-Anosov flow.

\begin{definition}[Dynamic splitting]
    Let $\Sigma^s, \Sigma^u$ be Kneser branched surfaces carrying the stable and unstable laminations $\Lam^s, \Lam^u$, respectively. A dynamic splitting of $\Sigma^s, \Sigma^u$ is a dynamic pair $(\Sigma^s_D, \Sigma^u_D)$ such that $\Sigma^s_D$ (resp, $\Sigma^u_D$) is a splitting of $\Sigma^s$ (resp, $\Sigma^u$).
\end{definition}

\begin{theorem}\label{thm: local lemma}
    Let $X \cong D^2\times I$ be a subset of $M^3$ such that the stable branched surface lies in $X$ as $D^2\times t$ sheets except in a small neighborhood of the branch locus, and let $\Sigma_2^u$ be transverse to $\Sigma^s_2$ on the cylindrical boundary $\partial D^2\times I$ at a uniformly bounded number of sheets. Then upon uniformly bounded finite splitting, we obtain pairs of transverse branched surfaces $\Sigma^u_2, \Sigma_2^s$ such that inside $X$, it's one of finitely many possible pairs. The number of possibilities depends only on the number of unstable sheets intersecting the cylindrical boundary. 
\end{theorem}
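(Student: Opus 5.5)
The plan is to show that inside $X\cong D^2\times I$ the unstable lamination is forced to be a product of vertical walls over a planar pattern in $D^2$. That pattern is determined by the finitely many boundary traces on $\partial D^2\times I$.

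First, I would record the boundary data. By Lemma \ref{lemma: decomposition into local lemma pieces} and the straightening in Lemma \ref{lemma: straightening isotopy near a cut}, on $\partial D^2\times I$ the unstable branched surface $\Sigma^u_2$ meets the stable sheets $D^2\times t$ transversely, in a uniformly bounded number $n$ of sheets. Each sheet is aligned with the stable normal fibers, so its trace on $\partial D^2\times I$ is a vertical arc $\{p\}\times I$. Hence the boundary data is a cyclically ordered set of at most $n$ points on $\partial D^2$, each labeled by a clump of unstable sheets, together with the co-orientations. There are finitely many such labeled configurations, and the count depends only on $n$.

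Second, I would show that every unstable leaf meeting $X$ does so in a product $\alpha\times I$. Here $\alpha\subset D^2$ is a properly embedded arc joining two boundary points, or possibly an arc ending at a branch locus of $\Sigma^s_2$ inside $X$, which is handled after the splitting below. Fix a stable sheet $D^2\times t$. Since $\Lam^u\pitchfork\Lam^s$, the intersection $\Lam^u\cap(D^2\times t)$ is a $1$-dimensional foliated set consisting of flow segments. Closed loops would bound disks in the stable leaf, and by Poincaré--Bendixson these are impossible for a transverse flow-line foliation in a leaf of a pseudo-Anosov stable lamination. Arcs with both ends on the same boundary clump, and bigons between sheets, I would remove with the bigon reduction of Step 0 in Section \ref{section: sim norm}; by Corollary \ref{cor: no bigons in vertical boundary} this reduction is supported inside $N(\Sigma^s_2)$, so it does not disturb the boundary data. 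The remaining arcs join distinct boundary points. Because the arcs in different sheets come from the same finitely many unstable sheets entering through the vertical walls, and the walls are vertical, I would isotope along the $I$-direction so that the trace in each sheet is independent of $t$.

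Third, I would handle the branch locus of $\Sigma^s_2$ inside $X$ and obtain finiteness. Near the small neighborhood of the stable branch locus, perform a uniformly bounded splitting of $\Sigma^s_2$ and $\Sigma^u_2$, of distance comparable to the diameter of $X$, which is bounded by Proposition \ref{prop: unifromly finite mother graph of all cuts}. After this splitting, each stable sheet in $X$ carries an arc system in $D^2$ that is a crossingless-up-to-transversality planar diagram with at most $n$ endpoints. Two unstable arcs cannot cross, since unstable leaves are disjoint. So each diagram is a non-crossing matching of the labeled boundary points, possibly with unstable branching recorded at trivalent points, and there are finitely many of these. Collapsing parallel unstable leaves into clumps then gives a transverse pair $(\Sigma^s_2,\Sigma^u_2)$ restricted to $X$ from a finite list depending only on $n$.

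I expect the main obstacle to be the second step: ruling out nontrivial recurrence of the unstable traces inside a stable disk sheet, i.e.\ spiraling or infinitely many nonparallel arcs accumulating in $D^2$. This is exactly where shadow-type parallelism could reappear in the interior of $X$. I would first try to exclude it using Theorem \ref{noswaths}, since $X$ has diameter less than $R$ by construction. If that fails, I would fall back on compactness: take a limit of counterexamples as in Proposition \ref{prop: stable and unstable limits share sublamination} and reproduce the fence contradiction of Subsection \ref{subsection: holonomy swaths}.
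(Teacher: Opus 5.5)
\textbf{The gap.} It is the sentence in your second step where you isotope along the $I$-direction to make the trace independent of $t$. Everything you establish before that sentence concerns one level $D^2\times t$ at a time. Knowing that every level meets the same walls on $\partial D^2\times I$ does not force different levels to induce the same matching.

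For example, take a component $l$ of $\Lam^u\cap X$ modeled on the graph $t=x^2-y^2$, meeting $\partial D^2\times I$ in walls over cyclically ordered points $a,b,c,d$. It traces the matching $\{\{a,b\},\{c,d\}\}$ on sheets below its saddle point and $\{\{a,d\},\{b,c\}\}$ on sheets above it. Each individual level passes all of your checks: no closed loops, no returning arcs, no crossings. But $l$ is a single disk meeting four walls, so with the walls fixed it is not isotopic to a union of products $\alpha\times I$. Such a saddle cannot be isotoped away; it has to be shown not to occur. Your per-sheet count of non-crossing matchings in the third step yields finiteness only after that.

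Separately, an isotopy supported in $N(\Sigma^s_2)$ is not thereby kept off $\partial D^2\times I$, which itself lies inside $N(\Sigma^s_2)$. So the appeal to Corollary \ref{cor: no bigons in vertical boundary} does not protect your boundary data.

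\textbf{What the paper does instead.} It treats the $I$-coordinate as a Morse function on each component $l$ of $\Lam^u\cap X$ and excludes all critical points. Then each $l$ is a product and the boundary matching propagates through every level.
\begin{itemize}
\item Extrema give a nullhomotopic closed orbit; this is essentially your closed-loop exclusion.
\item Saddles need a genuinely dynamical argument. When stable sheets lie both above and below the saddle, their traces on $l$ are flow segments. The paper assembles these into two simple closed curves in the unstable leaf meeting once, which is impossible in a plane or annulus.
\end{itemize}
One way to make the saddle step precise: in $\widetilde M$ a stable and an unstable leaf meet in at most one orbit, so the two traces of a single sheet on $l$ lie on one orbit. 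The part of $l$ between a sheet below the saddle and a sheet above it is then a disk in the plane $\widetilde{L}^u$. Its boundary alternates between arcs of two disjoint properly embedded orbits, two arcs of each, while its interior avoids both. No such disk fits in the closed strip between two disjoint proper lines.

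Because this argument needs sheets on both sides of the saddle, you must also shrink $X$ in the $I$-direction to a neighborhood of the stable sheets, as the paper does. A saddle above the topmost sheet is not excluded.

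\textbf{Your anticipated obstacle.} Recurrence within a single level is not the real difficulty, and your proposed tools do not reach saddles:
\begin{itemize}
\item Theorem \ref{noswaths} cannot help. Only $\partial D^2\times I$ was cleaned, so the interior of $X$, of diameter up to $2(R+C)$, may still lie in the shadow.
\item The compactness fallback does not apply either. Proposition \ref{prop: stable and unstable limits share sublamination} needs parallel loci of unbounded radius, which a bounded saddle does not supply.
\end{itemize}
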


\begin{proof}
    Let $\Lam^s, \Lam^u$ be a pair of stable and unstable laminations carried by the stable and unstable branched surfaces. For each connected component of $\Lam^u\cap X$, call it a local leaf $l$, the product structure on $X$ defines a Morse function $f:l\rightarrow [0,1]$ on $l$. If there are no critical points for all $l$, then the unstable lamination $\Lam^s$ is a product inside $X$. 
    
    If there exists an index 0 or index 2 critical point, there exists a pair of stable and unstable sheets that intersect in a nullhomotopic closed curve $c$ inside $X$, as shown in Figure \ref{fig: local lemma index 1 2 3}. Since the two laminations are the stable and unstable laminations of a pseudo-Anosov or Anosov flow, $c$ must be an orbit of the corresponding flow. However. there are no nullhomotopic closed flow lines, so this is a contradiction to Anosov or pA dynamics. 
    
    If there is some $l$ with an index 1 critical point, then in a neighborhood of that critical point, there is a local saddle. Consider the intersection between $l$ with all the local stable leaves. By our assumption, there is a local stable leaf lying above the saddle level and below it, as pictured in Figure \ref{fig: local lemma index 1 2 3}. Intersections of stable and unstable leaves are segments of flow lines of some pA flow $\phi$. Choosing a direction of the flow to ensure that the flow lines are connected, we can find in the unstable local leaf two simple closed curves $\gamma_1, \gamma_2$, as shown in Figure \ref{fig:local lemma index 1 contradiction}, with one geometric intersection. However, this is a contradiction because unstable leaves are globally planes or annuli, and such curves cannot exist. 

    Therefore, the combinatorial matching of endpoints on $\partial D^2\times t$ of the unstable branched surface persists as $t$ varies and determines the matching for all levels. The number of matchings determine the number of possible unstable branched surfaces restricted to $X$.
\end{proof}

In practice, using Lemma \ref{lemma: decomposition into local lemma pieces}, we choose $X$ to be a thickened disk $D_i\times I$, being careful to restrict $X$ to a small neighborhood of the stable sheets in the $I$ direction, since saddles can occur atop the topmost stable leaf in a thickened disk region. Nevertheless, since the number of unstable endpoints on $\partial D_i$ is uniformly bounded, there are only finitely many ways to complete the unstable branched surface away from the stable neighborhoods.

The following corollary is a result of Lemma \ref{lemma: decomposition into local lemma pieces}, that there is a finite partition of the fibered neighborhood $N(\Sigma^s_1)$.

\begin{corollary}\label{cor: finite transverse kneser pairs}
    There are finitely many transverse pairs $(\Sigma_i^s, \Sigma_i^u)$ of essential branched surfaces carrying the stable and unstable laminations of any Anosov or pseudo-Anosov flow on $M$.
\end{corollary}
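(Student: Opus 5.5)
The corollary follows by chaining the finiteness statements already proved, so the plan is bookkeeping: show that each step turns finitely many configurations into finitely many configurations, independently of the flow. First, by Theorem \ref{thm: finite pairs of Kneser branched surfaces}, every Anosov or pseudo-Anosov flow on $M$ has its Denjoy-split stable and unstable laminations carried, up to isotopy, by one of finitely many Kneser pairs $(\Sigma^s,\Sigma^u)$. In the Anosov case the split orbit is chosen as in Theorem \ref{theorem: reduce anosov to pA}; in the pseudo-Anosov case it is the singular orbits. Fix one such pair. Theorem \ref{noswaths} then supplies a uniform $R$ with empty $R$-parallel loci. Proposition \ref{prop: no radius R disks in shadow} passes to a distance-$2R$ splitting $\Sigma^u_1$ of $\Sigma^u$, and there are finitely many such splittings by the finiteness of distance-$R$ splittings. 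Now apply Propositions \ref{prop: uniformly finite clean enpoints} and \ref{prop: unifromly finite mother graph of all cuts}. They give a finite vertex set $V=\bigcup_S V_S$ and a finite mother graph $G$ on $\Sigma^s$, both depending only on $(\Sigma^s,\Sigma^u_1,\tau,R)$.

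Next, Lemma \ref{lemma: create bubble and get finitely many branched surfaces} shows that only finitely many blown-up surfaces $\mathrm{Bl}_i(\Sigma^s)$ arise. Each carries the graph $G'$ with clean vertices and effective edges. Follow this with the distance $R+2C$ splitting to $\Sigma^s_1$, again one of finitely many choices. Lemma \ref{lemma: straightening isotopy near a cut} then isotopes $\Lambda^u$ near each cutting arc, preserving $\Lambda^s\pitchfork\Lambda^u$, so that a neighborhood of the saturation of $G'$ is clean. Lemma \ref{lemma: decomposition into local lemma pieces} decomposes $N(\Sigma^s_1)$ into finitely many thickened disks $X_j=D^2_j\times I_j$. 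This decomposition depends only on $N(\Sigma^s_1)$. On each cylindrical boundary $\partial D^2_j\times I_j$ the unstable carrier is transverse to the stable sheets, with a number of sheets bounded by the complexity of $\Sigma^u_1$.

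Then apply Theorem \ref{thm: local lemma} on each $X_j$. After a uniformly bounded splitting, the restriction of the unstable carrier to $X_j$ is one of finitely many product pictures. Each picture is determined by a matching of the finitely many boundary endpoints, and the picture is transverse to the stable sheets. Outside $N(\Sigma^s_1)$, Corollary \ref{cor: outside fibered neighborhood branched surface is annuli and spiral disk} and Theorem \ref{thm: finite singular orbits} leave only finitely many ways to complete the unstable carrier, namely annuli with spiraling disks in finitely many gut types. Gluing these finitely many local choices along the finite decomposition gives finitely many global pairs $(\Sigma^s_i,\Sigma^u_i)$. Each pair is transverse and carries $(\Lambda^s,\Lambda^u)$ up to isotopy. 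Essentiality is inherited because splittings and blowups of essential branched surfaces that still fully carry an essential lamination remain essential (Gabai--Oertel \cite{GabaiOertel}).

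The main obstacle is checking that every step is \emph{uniform}: each choice (splitting, blowup ordering, cut isotopy, local matching) must come from a finite list fixed before the flow is chosen, and the isotopies of different steps must not undo each other. I would handle the second point first. The bigon-reduction isotopies are supported in $N(\Sigma^s_1)$ by Corollary \ref{cor: no bigons in vertical boundary}, so cleaned neighborhoods stay clean and transversality is never destroyed. Finiteness then reduces to counting, by the product of the finite numbers above, over the finitely many initial Kneser pairs.
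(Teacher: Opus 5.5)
Your proposal is correct and follows the paper's route. The paper's proof is exactly your final step: cut $N(\Sigma^s_1)$ into finitely many product pieces $D_j\times I_j$ via Lemma \ref{lemma: decomposition into local lemma pieces}, use the uniformly bounded unstable boundary data and Theorem \ref{thm: local lemma} to get finitely many local pictures per piece, and count the finitely many gluings, while your earlier paragraphs make explicit the upstream finiteness chain (Kneser pairs, uniform $R$, splittings, mother graph, blowups, and the complementary regions treated in Section \ref{section: sim norm}) that the paper takes as already established.
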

\begin{proof}
    By Lemma \ref{lemma: decomposition into local lemma pieces}, the fibered neighborhood $N(\Sigma^s)$ can be cut along the saturations of finitely many cutting arcs into finitely many product regions $D_j\times I_j$ drawn from a finite collection of possible types. For each product region, $\Sigma^u$ meets the vertical boundary in a uniformly bounded number of arcs.

There, in each $D_j\times I_j$, there are only finitely many possible combinatorial types for the intersection with $\Sigma^u$. Since there are only finitely many product regions and finitely many ways to identify their vertical boundary faces, there are only finitely many possibilities for the resulting transverse pair $(\Sigma^s,\Sigma^u)$.

The bound depends only on the finite collection of local pieces and their uniformly bounded boundary intersection data. It therefore depends only on $M$ and the fixed triangulation $\tau$, not on the carried stable and unstable laminations.
\end{proof}

To complete the proof of Theorem \ref{Theorem: main theorem}, we prove the following result that a transverse pair of stable and unstable branched surfaces determine the flow. 

\begin{prop}\label{prop: transverse Kneser pair determines flow}
    Let $(\Sigma^s,\Sigma^u)$ be a transverse pair of essential branched surfaces in $M$. Suppose two Anosov or pseudo-Anosov flows $\phi_1,\phi_2$ have stable and unstable laminations simultaneously carried by $\Sigma^s,\Sigma^u$. Then $\phi_1$ and $\phi_2$ have the same free homotopy classes of orbits and are thus orbit equivalent.
\end{prop}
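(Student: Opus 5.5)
The plan is to reduce the statement to the classification theorem of Barthelm\'e--Frankel--Mann \cite{BFMclassification}. That result says two transitive pseudo-Anosov flows on a hyperbolic (hence atoroidal) manifold with the same set of free homotopy classes of periodic orbits are orbit equivalent, provided no trees of scalloped regions occur, and in hyperbolic manifolds they do not. Pseudo-Anosov flows on closed hyperbolic 3-manifolds are transitive, by Mosher and Fenley. So it suffices to show that $\phi_1$ and $\phi_2$ have the same spectrum of free homotopy classes of closed orbits.

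The key construction is a \emph{combinatorial flow} on the transverse pair. Since $\Sigma^s\pitchfork\Sigma^u$, the intersection $N(\Sigma^s)\cap N(\Sigma^u)$ is a finite union of pieces. Each piece is a product of a segment of the 1-complex $\Sigma^s\cap\Sigma^u$ with a rectangle (a stable fiber times an unstable fiber), glued along the branch loci. For $j=1,2$, the laminations $\Lam^s_j,\Lam^u_j$ are fully carried, and their leaves intersect along orbits. So every orbit of $\phi_j$ that meets the laminations (after Denjoy splitting, every orbit except the finitely many split ones) is carried by the oriented train-track-like 1-complex $\tau=\Sigma^s\cap\Sigma^u$. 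The orientation of $\tau$ comes from the flow direction. I would first check that the two flows induce the same orientation on each edge of $\tau$. The plan is to use the local dynamics: along a stable sheet, holonomy contracts in the forward direction, and this is readable from the branching directions of $\Sigma^s$ and $\Sigma^u$ at the branch loci (the cusps of $\Sigma^s$ and $\Sigma^u$ point in opposite flow directions).

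Next, I would show that a closed orbit $\gamma$ of $\phi_1$ is carried by $\tau$ as a periodic edge path $w$. The word $w$ determines a sector sequence in both $\Sigma^s$ and $\Sigma^u$, hence a closed stable and unstable sheet sequence. For $\phi_2$, I would use the fully carried laminations to find leaves of $\Lam^s_2$ and $\Lam^u_2$ following $w$. The holonomy of $\Lam^s_2$ around $w$ is a map from a stable fiber interval into itself. I would argue it is a contraction in the forward direction, with a fixed point by Brouwer, as in Proposition \ref{prop: get perfect fits}. The unstable holonomy is a contraction in backward time, which likewise gives a fixed point. Taken together, the stable leaf and the unstable leaf through these fixed points meet in a closed curve carried by $w$. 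That curve is an orbit of $\phi_2$ freely homotopic to $\gamma$. By symmetry the two spectra agree. The orbits removed by Denjoy splitting are the canonical ones from Section \ref{section: anosov to pA}, which are isotopic for both flows.

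The main obstacle is showing that the holonomy around $w$ is genuinely nondegenerate for $\phi_2$, i.e.\ that the word $w$ is realized by a single closed orbit rather than by an annular leaf that carries $w$ only up to parallel copies. I would try to handle this with the transversality from Theorem \ref{thm: local lemma}: in each product piece the unstable lamination is a trivial product over the stable sheets. This should make the first-return map of the branched-surface pair a well-defined hyperbolic-type (Markov-like) map on the finite set of rectangles. Closed orbits then correspond bijectively, up to free homotopy, to periodic symbolic words, and that correspondence depends only on $(\Sigma^s,\Sigma^u)$.
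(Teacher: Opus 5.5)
There is a genuine gap, and it sits at the step you yourself flag as the main obstacle: nothing in your argument transfers the periodic behaviour of $\phi_1$ along $w$ to the laminations of $\phi_2$.

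Being fully carried only means that $\Lambda^s_2$ (resp.\ $\Lambda^u_2$) meets every sector of $\Sigma^s$ (resp.\ $\Sigma^u$). It does not imply that a prescribed periodic sector sequence is followed forever by some leaf. A train track fully carrying a minimal lamination typically also carries closed train paths that no leaf follows, and the same happens for branched surfaces.

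Nor can you get the contraction of the $\Lambda^s_2$-holonomy around $w$ from the dynamics of $\phi_2$. Those dynamics control holonomy only along actual orbits of $\phi_2$, and producing such an orbit is the goal, so this step is circular. Along $w$, the branched surface alone gives only a partially defined fibre transport in $N(\Sigma^s)$. You would need to show that this transport maps its domain into itself and that $\Lambda^s_2$ meets that domain, and neither follows from what you have.

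Your proposed repair is that the Local Lemma makes the return map Markov-like, so that periodic words correspond to orbits in a way depending only on $(\Sigma^s,\Sigma^u)$. This is essentially a restatement of the proposition. The Local Lemma only gives a product structure in each piece, with no Markov property or expansion. The agreement of edge orientations on $\Sigma^s\cap\Sigma^u$ is likewise only asserted.

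The missing idea is a structure that $\phi_1$ produces but that is expressed purely through $\Sigma^s$, so that it then traps $\Lambda^s_2$. The paper finds it one dimension up, inside the unstable annular leaf $R$ of $\phi_1$ with $\gamma=B\cap R$:
\begin{enumerate}
\item Because $R$ is carried by $\Sigma^u$, it is transverse to $\Sigma^s$, so $\tau=\Sigma^s\cap R$ is a train track on $R$.
\item Flow lines of $\phi_1$ in $R$ spiral onto $\gamma$, so the boundary rays of non-bubble complementary regions of $\tau$ spiral onto $\gamma$ from both sides.
\item The $K$-density of $R\cap\partial_vN(\Sigma^s)$ lets one choose such cusps $x_\pm$ on either side of $\gamma$, within distance $K$ of it, and a transversal $J$ joining them.
\item Since $\Lambda^s_2\cap R$ is carried by $\tau$ (through the splitting $\Sigma^s_2$), the spiraling forces the order-preserving first-return map on $J\cap\Lambda^s_2$ inward at both ends, so it has a fixed point.
\item That fixed point gives a closed curve in a leaf of $\Lambda^s_2$ freely homotopic to $\gamma$. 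The leaf is therefore an annulus whose core is a closed orbit of $\phi_2$ in the class of $\gamma$.
\end{enumerate}
Only $\Lambda^s_2$ and the single leaf $R$ are used; there is no need to realise $w$ simultaneously in $\Lambda^u_2$ and intersect. Your final reduction to Barthelm\'e--Frankel--Mann, via transitivity and the absence of trees of scalloped regions, matches the paper.
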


Before proving Proposition \ref{prop: transverse Kneser pair determines flow}, we first establish that in an unstable leaf R,  intersections with the vertical boundary of the stable branched surface $\Sigma^s$ is $K$-dense in R.

\begin{lemma}\label{lemma: stable maw is dense in unstable leaf}
    Let $R$ be an unstable leaf transverse to the a normal Kneser branched surface $\Sigma^s$. Then there exists $K=K(\Sigma^s,\Sigma^u)$ such that the intersection between $R$ and the vertical boundary $R\cap\partial_v N(\Sigma^s)$ is $K$-dense in $R$.\qed
\end{lemma}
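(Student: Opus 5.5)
The plan is to argue by contradiction combined with a compactness argument, in the same spirit as Proposition \ref{prop: stable and unstable limits share sublamination}. Suppose no such $K$ exists. Then there are unstable leaves $R_j$, carried by $\Sigma^u$ and transverse to $\Sigma^s$, together with leafwise disks $D_j\subset R_j$ of radius $j$ that are disjoint from $\partial_v N(\Sigma^s)$. We may lift to the universal cover and, since $M$ is compact, translate by deck transformations so that the centers of the $D_j$ converge. By Lemma \ref{lemma: disk converge to lam}, after passing to a subsequence the disks $D_j$ converge $C^1$ on compact sets to a complete leaf $L_\infty$ carried by $\Sigma^u$. Let $\overline{L_\infty}$ be the closure lamination.

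Next we locate $L_\infty$ relative to $N(\Sigma^s)$. Since $D_j$ avoids $\partial_v N(\Sigma^s)$ and $D_j$ is connected, each $D_j$ lies either entirely inside $N(\Sigma^s)$ minus its vertical boundary, or entirely in the closure of a complementary region of $N(\Sigma^s)$. This splits the argument into two cases.

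\emph{Case A: the disks lie in a complementary region $V$.} Each $D_j$ sits inside a complementary region, and by Corollary \ref{cor: outside fibered neighborhood branched surface is annuli and spiral disk} unstable leaves meet $V$ only in annuli and disks spiraling onto them. These pieces lie near the finitely many singular/Denjoy-split orbits, where Theorem \ref{thm: finite singular orbits} gives uniformly finitely many models. In these models the leafwise distance from any point to $\partial V\cap\partial_v N(\Sigma^s)$ is bounded. In particular, a spiraling disk reaches the vertical boundary within a bounded leafwise distance, because its annulus core has bounded length in the fixed model. This bounds the possible radius, which is a contradiction.

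\emph{Case B: the disks lie inside $N(\Sigma^s)$.} Here the $D_j$ are transverse to the $I$-fibers of $\Sigma^s$ and never cross $\partial_v N(\Sigma^s)$. Projection along fibers then makes $D_j$ locally a section over $\Sigma^s$ that never hits the branch locus from the cusp side. In the limit, $L_\infty$ is a complete leaf inside $N(\Sigma^s)$ transverse to the fibers, and hence carried by $\Sigma^s$. So $\overline{L_\infty}$ is a lamination carried by both $\Sigma^s$ and $\Sigma^u$, and it contains a minimal sublamination. We then rerun Section \ref{section: swaths}.
\begin{itemize}
\item If the sublamination has no holonomy, the Euler class argument of Subsection \ref{subsection: no holonomy euler class} applies, since $\Sigma^s$ carries no tori in a hyperbolic manifold.
\item If it has holonomy, we obtain anti-homotopic closed curves in leaves of $\Lam^s$ and $\Lam^u$, and the common fence and lozenge arguments of Subsection \ref{subsection: holonomy swaths} give a contradiction.
\end{itemize}
A cheaper alternative, which I would try first, is to note that $D_j$ exhibits an arbitrarily large parallel locus between $R_j$ and the stable leaves it shadows. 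This contradicts Theorem \ref{noswaths} directly once we check that such a $D_j$ is dual to a stable disk.

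The main obstacle is Case B: making precise that an unstable disk inside $N(\Sigma^s)$ avoiding the vertical boundary is genuinely parallel to a stable disk in the sense of the dual-disk definition. The difficulty is that transversality of $R$ to $\Sigma^s$ could a priori let $R$ meet the fibers at steep angles over long distances. I would handle this by using the transverse-pair normalization: a transverse unstable sheet crosses each stable $I$-fiber, so over a long stretch it must exit through $\partial_h$ or $\partial_v$. Exiting through $\partial_h$ puts us back in Case A, so only the parallel configuration survives.
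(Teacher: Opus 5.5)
The paper states this lemma with \qed\ and gives no argument, so your proposal has to stand on its own. It does not: the case split at its centre is false. Since $\partial N(\Sigma^s)=\partial_hN(\Sigma^s)\cup\partial_vN(\Sigma^s)$, a connected disk that avoids $\partial_vN(\Sigma^s)$ can still pass into and out of $N(\Sigma^s)$ through $\partial_hN(\Sigma^s)$. For a leaf transverse to $\Sigma^s$ this is the typical situation. One may take $R\cap N(\Sigma^s)$ to be the tie neighborhood $N(\tau)$ of the train track $\tau=R\cap\Sigma^s$, which is how the lemma is used in Proposition~\ref{prop: transverse Kneser pair determines flow}. Then $R\cap\partial_hN(\Sigma^s)=\partial_hN(\tau)$, and $R\cap\partial_vN(\Sigma^s)$ is the set of cusps. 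A cusp-free disk is one on which $\tau$ has no switches. Such a disk is crossed by long, parallel, switch-free branches of $\tau$, alternating between strips of $N(\tau)$ and pieces of complementary regions of $N(\Sigma^s)$. This configuration is neither Case~A nor Case~B, it is where all the content of the lemma lies, and your proposal never treats it.

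Case~B also rests on a misreading of transversality. Inside $N(\Sigma^s)$, $R$ is tangent to the $I^s$-fibers, not transverse to them. Every tie in $R$ has length at most the maximal fiber length and ends on $\partial_hN(\Sigma^s)$, where $R$ leaves $N(\Sigma^s)$. So Case~B is vacuous for large $j$. The claim that $L_\infty$ is carried by $\Sigma^s$, and with it the rerun of Section~\ref{section: swaths}, has no basis. The same confusion undoes your ``cheaper alternative'': a disk in a leaf transverse to $\Sigma^s$ is not dual to any stable disk, so Theorem~\ref{noswaths} says nothing about it.

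Case~A is not established either. Theorem~\ref{thm: finite singular orbits} controls the triangulated gut solid tori, not the leafwise geometry of $R\cap V$. Moreover, the bound you assert fails within $V$. Suppose a flow line $\ell\subset R$ lies on a Denjoy-split stable prong of a singular orbit. Then $R\cap V$ contains the band between the two split copies of $\ell$. This band has bounded width but infinite length in forward time, and its only cusp sits at its backward end. Compare the non-compact complementary regions of $\tau$ with two boundary rays in the proof of Proposition~\ref{prop: transverse Kneser pair determines flow}. At best Case~A yields a bound on the inradius of components of $R\cap\overline V$. After that you must exit $V$ through $\partial_hN(\Sigma^s)$, which puts you back in the untreated mixed case.

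The missing ingredient is a mechanism that forces switches of $\tau$; these occur exactly where $R$ crosses the branch locus of $\Sigma^s$. For a single flow, one route is this. $\Lam^u$ is minimal because the flow is transitive. So once some plaque of $\Lam^u$ crosses the branch locus transversely, every leaf enters a flow box all of whose plaques also cross it. ``Meets $\partial_vN(\Sigma^s)$ within leafwise distance $K$'' is then an open condition on points of $\Lam^u$, and compactness gives $K$. That $K$ depends on the flow, which is all Proposition~\ref{prop: transverse Kneser pair determines flow} uses.

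If you want $K=K(\Sigma^s,\Sigma^u)$ as stated, your limit argument needs a genuinely new idea. The object $\overline{L_\infty}$ is a limit of leaves of different flows, so it is neither minimal nor dynamic. Nothing makes it a common sublamination of stable and unstable limits. Hence neither leaf density nor the dichotomy of Section~\ref{section: swaths} is available for it.
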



    \begin{figure}
        \centering
        \includegraphics[width=0.7\linewidth]{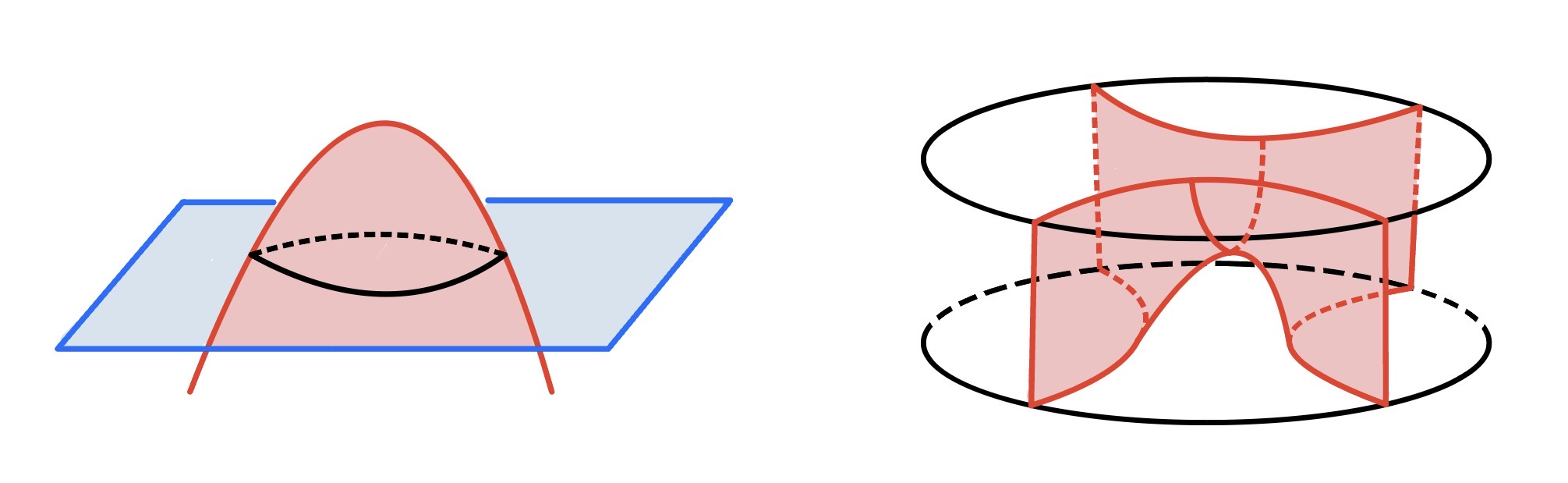}
        \caption{A local picture of index 0 and 2 critical points in a local unstable leaf and a local picture of an index 1 critical point.}
        \label{fig: local lemma index 1 2 3}
    \end{figure}

\begin{figure}
        \centering
        \includegraphics[width=0.8\linewidth]{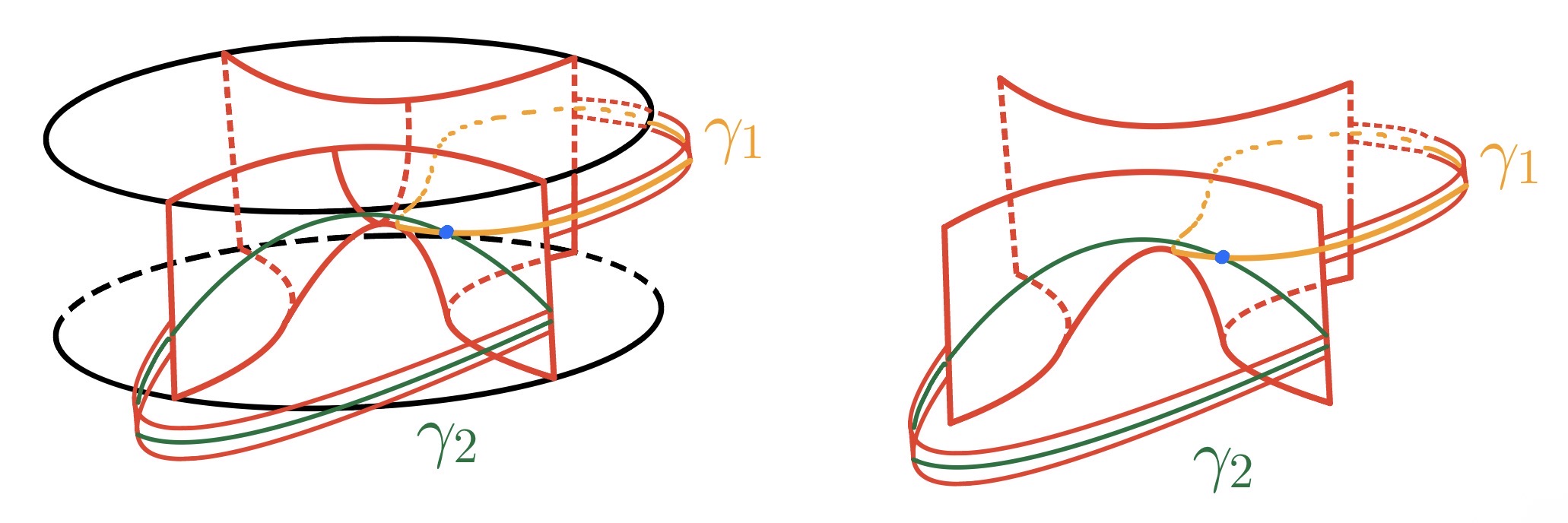}
        \caption{Two curves $\gamma_1, \gamma_2$ on an unstable leaf with one intersection.}
        \label{fig:local lemma index 1 contradiction}
\end{figure}

\begin{figure}
    \centering
    \includegraphics[width=0.5\linewidth]{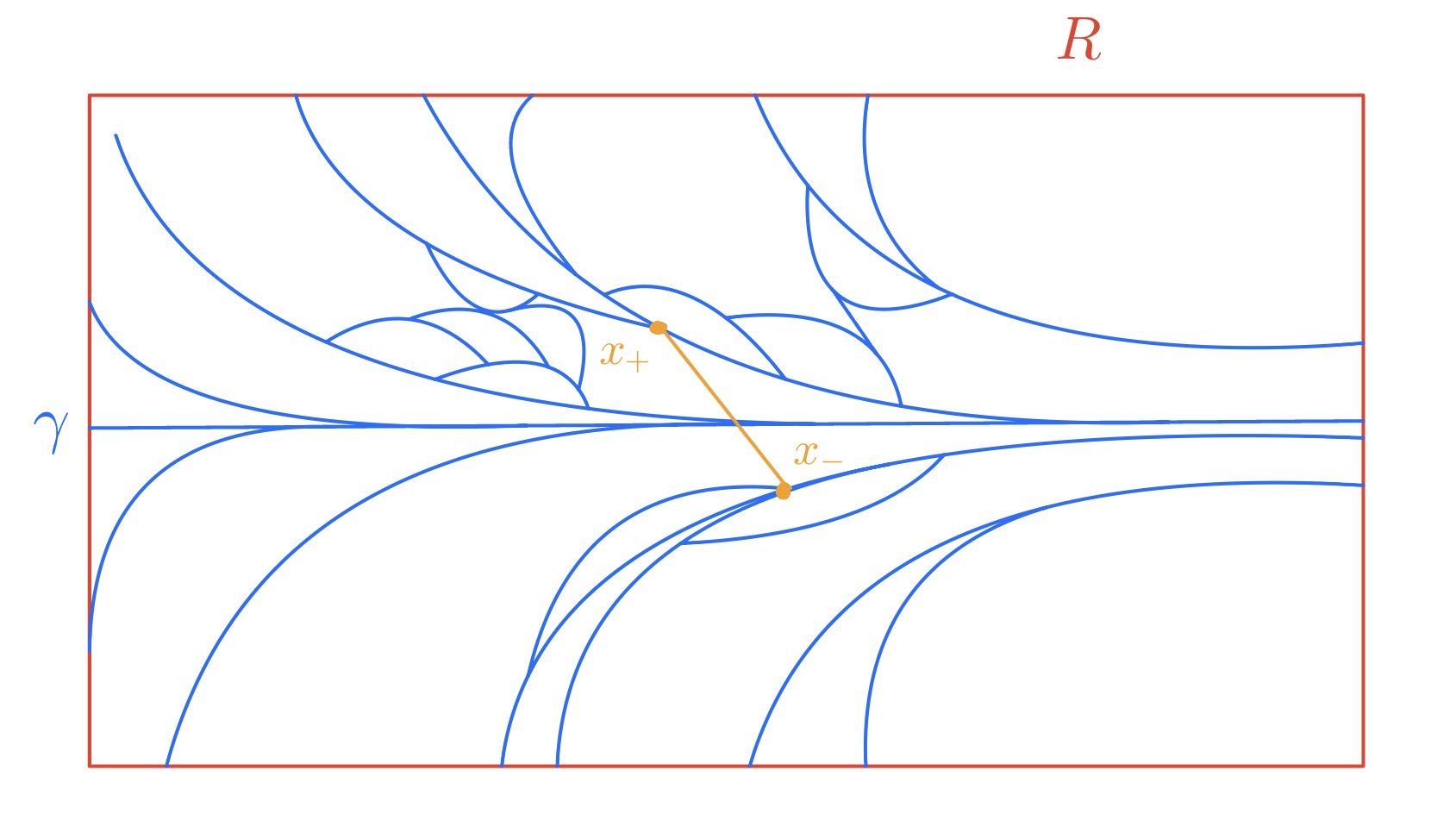}
    \caption{An unstable leaf $R$ of $\Lam^u_1$ and the train track $\tau$ as the imprint of $\Sigma^s$ on $R$. The orange transversal $J$ connects two intersections of $R$ with the non-bubble branch locus of $\Sigma^s$.}
    \label{fig: orbit equivalence}
\end{figure}
Now, we can prove Proposition \ref{prop: transverse Kneser pair determines flow}. 
\begin{proof}[Proof of Proposition \ref{prop: transverse Kneser pair determines flow}]
    Suppose that $\Sigma^s, \Sigma^u$ have dynamical splittings $\Sigma^s_1, \Sigma^u_1$ and $\Sigma^s_2, \Sigma^u_2$ carrying stable and unstable laminations $\Lam^s_1, \Lam^u_1$ and $\Lam^s_2, \Lam^u_2$ of pseudo-Anosov flows $\phi_1, \phi_2$, respectively. Let $\gamma$ be a closed orbit of $\phi_1$ such that $\gamma = B\cap R$, where $B\subset \Lam_1^s$ is a stable leaf and $R\subset \Lam^u_1$ is an unstable leaf. We show that $\Lam^s_2$ has a leaf $L$ containing a closed curve $c$ in the same free homotopy class as $\gamma$.

    Consider the train track $\tau = \Sigma^s\cap R$. Every non-periodic flow line in the unstable annulus $R$ is asymptotic to $\gamma$ in backward time and therefore spirals toward $\gamma$ intrinsically in $R$. The intersection between $R$ and the vertical boundary $R\cap \partial_vN(\Sigma^s)$ is $K$-dense for some $K>0$ by Lemma \ref{lemma: stable maw is dense in unstable leaf}. See Figure \ref{fig: orbit equivalence} for an illustration. Since $\gamma$ is separating on $R$, denote the two components of $R\setminus \gamma$ as $R_+, R_-$. On each of $R_+, R_-$, we choose an intersection in $R\cap \partial_vN(\Sigma^s)$ distance at most $K$ away from $\gamma$ that border a non-bubble complementary component of $\tau$. Recall that finitely many bubble complementary regions were created in Lemma \ref{lemma: create bubble and get finitely many branched surfaces}. On the other hand, a non-bubble complementary region is non-compact and has two boundary rays that are branches of $\tau$ realized as intersection of $R$ with the horizontal boundary $\partial N_h(\Sigma^s)$. Since the branches of $\tau$ carry flow lines spiraling to $\gamma$, the boundary rays of non-bubble complementary regions also spiral into $\gamma$. Let the chosen intersection in $R_\pm$ be $x_\pm$. Let $J$ be the transversal joining $x_\pm$. Since the boundary rays converge to $\gamma$, $J$ has the property that following the holonomy of $\Lam_1^s\cap R$ around the core of $R$ give a first-return map that brings both endpoints of $J$ closer to $\gamma$.

    Next, consider the intersection train track $\tau' = \Sigma_2^s\cap R$, which is carried by $\tau$ as a splitting. Following leaves of $\Lam_2^s\cap R$ once around $R$ also gives an order-preserving return map $f_2$ on $K=J\cap\Lam_2^s$, where spiraling boundary rays for $\tau$ force the return map inward at the two ends. Therefore the first-return map $f:K\rightarrow K$ has a fixed point, and the corresponding leaf of $\Lam^s_2$ has a closed curve $c\subset R$ with $[c]=[\gamma]$. That stable leaf of $\phi_2$ therefore is an annulus and has a closed orbit $\gamma'$ such that $[\gamma']=[\gamma]$.

    We can similarly argue by interchanging the roles of $\phi_1, \phi_2$ that any closed orbit in one flow has a corresponding closed orbit of the same free homotopy class in the other. Work of Barthelm\'e--Frankel--Mann \cite{BFMclassification} tells us that transitive flows on 3-manifolds without trees of scalloped regions are determined by the free homotopy classes of their closed orbits. Since $M$ is hyperbolic, every Anosov and pseudo-Anosov flow is transitive, and there are no trees of scalloped regions, so $\phi_1, \phi_2$ are orbit equivalent. We have thus shown that a transverse pair of split Kneser branched surfaces can carry the stable and unstable laminations of at most one flow up to orbit equivalence.
\end{proof}

Since Corollary \ref{cor: finite transverse kneser pairs} established that there are finitely many transverse pairs of essential branched surfaces carrying the stable and unstable laminations of Anosov or pseudo-Anosov flows, and each pair determines at most one flow, we conclude Theorem \ref{Theorem: main theorem}.

\subsection*{Tool use} AI was not used at any stage in the creation and preparation of this work.

\bibliography{references}
\bibliographystyle{plain}

\end{document}